\mathchardef\varPsi="0109
\newcommand{\rrVert}{\Vert}
\newcommand{\rrvert}{\vert}
\newcommand{\llVert}{\Vert}
\newcommand{\llvert}{\vert}
\newcommand{\trup}[2]{{#1}/{#2}}
\renewcommand{\epsilon}{\varepsilon}
\newcommand{\IR}{\mathbb{R}}
\newcommand{\R}{\mathbb{R}}
\newcommand{\IN}{\mathbb{N}}
\newcommand{\Pb}{\mathbb{P}}
\newcommand{\E}{\mathbb{E}}
\newcommand{\tf}{\mathcal{F}}
\newcommand{\hac}{\mathcal{H}}
\newcommand{\hact}{\mathcal{H}_T}
\newtheorem{remark}{Remark}[section]
\newtheorem{theorem}[remark]{Theorem}
\newtheorem{proposition}[remark]{Proposition}
\newtheorem{lemma}[remark]{Lemma}
\newtheorem{hyp}{Hypothesis}
\begin{document}

\begin{frontmatter}
 
\title{A  proof of a support theorem for stochastic wave equations in H\"older norm with some general noises}

\runtitle{A support theorem for stochastic wave eqs.}

\begin{aug}
\author{\inits{}\fnms{Francisco J.} \snm{Delgado-Vences}\thanksref{e1}\ead[label=e1,mark]{delgado@im.unam.mx}} 
\runauthor{Francisco Delgado-Vences} 
\address{Instituto de Matem\`aticas, Universidad Nacional Aut\'onoma de M\'exico. \\ Oaxaca de Juarez, Oaxaca.\\ \printead{e1}}
\end{aug}

\received{\smonth{7} \syear{2018}}

%
\begin{abstract}
In this paper we characterize the topological support in H\"older norm of the law of the
solution to a stochastic wave equation with three-dimensional space
variable is proved. This note is a continuation of \cite{Delgado--Sanz-Sole012} and \cite{delgado-ss-14}.
The result is a consequence of an approximation theorem, in the convergence of probability, for a sequence of evolution
equations driven by a family of regularizations of the driving noise.
We extend two previous results
on this subject. The first extension is that we cover the case of multiplicative
noise and non-zero initial conditions. The second extension is related to the
covariance function associated to the noise, here we follow the approach of
Hu, Huang and Nualart and ask conditions in terms the of the mean H\"older
continuity of such covariance function.

\end{abstract}


%
\begin{keyword}
\kwd{approximating schemes}
\kwd{stochastic wave equation}
\kwd{support theorem}
\end{keyword}

\end{frontmatter}


\section{Introduction}
\label{s1}

This paper is an extension and (in some sense) also continuation of \cite{Delgado--Sanz-Sole012}  and \cite{delgado-ss-14}
where we prove a characterization of the topological support in H\"older norm for the law of the solution of a stochastic wave equation. The main difference in the method used here, with respect to the one used in 
\cite{Delgado--Sanz-Sole012} and \cite{delgado-ss-14}, is  very technical and it is 
described in the Remark \ref{rs3.1} below. This fact was noticed in \cite{h-hu-nu} and here we have used very often; it allow us to get very good estimates for several
quantities which leave us to prove H\"older continuity of the solution of the stochastic wave equation as in \cite{h-hu-nu}.

Consider the stochastic partial differential equation (SPDE)
\begin{eqnarray}
\label{s1.1} \biggl(\frac{\partial^2}{\partial t^2} - \Delta \biggr) u(t,x) &=& \varsigma
\bigl(u(t,x) \bigr) \dot M(t,x) + b \bigl(u(t,x) \bigr),
\nonumber
\\[-8pt]
\\[-8pt]
u(0,x) &=& v_0(x)\qquad  \frac{\partial}{\partial t}u(0,x) = \tilde v_0(x),
\nonumber
\end{eqnarray}
%
%
where $\Delta$ denotes the  Laplacian on $\R^3$, $T>0$ is fixed, $t\in\, (0,T]$ and $x\in\R^3$. The 
non-linear terms and the initial conditions are defined by functions $\varsigma, b: \R\rightarrow \R$ 
and $v_0, \tilde v_0: \R^3 \rightarrow \R$, respectively. 

For the definition of the noise we follow \cite{h-hu-nu}.
The notation $\dot M(t,x)$ refers to the 
formal derivative of a Gaussian random field $M$ white in the time variable and with a correlation 
in the space variable given by some function. 
More specifically,
\begin{equation}
\E\left (\dot M(t,x) \dot M(s,y)\right) = \delta_{0}(t-s) f(x-y),
\end{equation}
where $\delta_{0}$ denotes the delta Dirac measure and $f$ is a 
non-negative, non-negative definite function which is a tempered distribution on $\R^3$ and then 
$f$ is locally integrable. We know that in this case $f$ is the Fourier transform of a non-negative tempered measure $\mu \in \R^3$ 
which is called the spectral measure of $f$. That is, for all $\varphi$ belonging to the space $\mathcal{S}(\R^3)$ of rapidly decreasing
$C^\infty$ functions we have 
\begin{equation}
\int_{\R^3} f(x) \varphi(x) dx= \int_{\R^3} \mathcal{F}\varphi(\xi) \mu(d\xi), 
\end{equation}
and there is an integer $m\ge 1$ such that
\begin{equation}
 \int_{\R^3} (1+|\xi|^2)^{-m} \mu(d\xi)<+\infty
\end{equation}
where $\mathcal{F}\varphi$ is the Fourier transform of $\varphi\in\mathcal{S}(\R^3)$:
\begin{equation}
 \mathcal{F}\varphi(\xi)=\int_{\R^3} \varphi(x) e^{-i\xi\cdot x} dx\nonumber
\end{equation}

Our basic assumption on $f$ is 
\begin{equation}\label{cond-f}
 \int_{|x|\le 1} \frac{f(x)}{|x|}dx<+\infty
\end{equation}
It is well-know (see for instance \cite{dalang} or \cite{dalang-quer}) that this is equivalent to 
\begin{equation}
 \int_{|x|\le 1} \frac{\mu(d\xi)}{1+|\xi|^2}<+\infty,
\end{equation}
and since we are in $\R^3$, \eqref{cond-f} is satisfied if there is a $\kappa<2$ such that in a neighborhood of $0$, 
$f(x)\le C |x|^{-\kappa}$.

Let $G(t)$ be the fundamental solution to the wave equation in dimension three, 
$G(t,dx)=\frac{1}{4\pi t}\sigma_t(dx)$,
where $\sigma_t(x)$ denotes the uniform surface measure on the sphere of radius $t>0$ with total mass
$4\pi t^2$.

We consider a random field solution to the SPDE \eqref{s1.1}, which means a real-valued adapted 
(with respect to the natural filtration generated by the Gaussian process $M$) stochastic process
$\{u(t,x), (t,x)\in(0,T]\times \R^3\}$
satisfying 
\begin{eqnarray}
\label{s1.8}
u(t,x)& = & X^0(t,x)+\int_0^t \int_{\mathbb{R}^3} G(t-s,x-y) \varsigma(u(s,y)) M(ds,dy)\nonumber\\ 
&&+ \int_0^t  \big[G(t-s,\cdot)\star b(u(s,\cdot))\big](x) ds.
\end{eqnarray} 
Here
\begin{equation}
\label{i.c}
X^0(t,x)= [G(t)\star \tilde v_0](x)+\left[\frac{d}{d t} G(t)\star v_0\right](x),
\end{equation}

and the symbol ``$\star$" denotes the convolution in the spatial argument. 

The stochastic integral (also termed stochastic convolution) in \eqref{s1.8}  is defined as a 
stochastic integral with respect to a sequence of independent standard Brownian motions 
$\{W_j(s)\}_{j\in\mathbb{N}}$, as follows. Let $\hac$ be the Hilbert space defined by the completion 
of $\mathcal{S}(\R^3)$, endowed with the semi-inner 
product
 \begin{equation}
 \langle\varphi,\psi\rangle_\hac = \int_{\R^3}\mu(d\xi)\tf\varphi(\xi) \overline{\tf\psi(\xi)}=
 \int_{\R^3}\int_{\R^3} \varphi(y)\psi(x)f(x-y)dxdy,\nonumber
 \end{equation} 
where $\mu$ is the spectral measure of $f$. Then
\begin{eqnarray}
\label{s1.7}
&&\int_0^t \int_{\mathbb{R}^3} G(t-s,x-y) \varsigma(u(s,y)) M(ds,dy)\nonumber\\
&&\qquad :=\sum_{j\in \mathbb{N}}\int_0^t \langle G(t-s,x-\ast) \varsigma(u(s,\ast)), e_j\rangle_{\hac} W_j(ds),
\end{eqnarray}
where  $(e_j)_{j\in\mathbb{N}}\subset \mathcal{S}(\R^3)$ is a complete orthonormal basis of $\hac$. 

Assume that $\varphi\in \hac$ is a signed measure with finite total variation. Then, by applying \cite[Theorem 5.2]{kx}
(see also \cite[Lemma 12.12]{mat}  for the case of probability measures with compact support) and a 
polarization argument on the positive and negative parts of $\varphi$, we obtain
\begin{equation}
\label{fundamental}
\Vert\varphi\Vert_\hac^2=C\int_{\R^3}\int_{\R^3} \varphi(dx) \varphi(dy) f(x-y)=C\int_{\R^3}\mu(d\xi)\tf\varphi(\xi).
\end{equation}

For $t_0\in[0,T]$,  $K\subset\R^3$ compact and $\rho\in(0,1)$, we denote by  $\mathcal{C}^\rho([t_0,T]\times K)$ 
the space of real functions $g$ such that $\Vert g\Vert_{\rho,t_0,K}<\infty$, where
\begin{equation}
\Vert g\Vert_{\rho,t_0,K}:= \sup_{(t,x)\in[t_0,T]\times K} |g(t,x) |+  
\sup_{\stackrel{(t,x),(\bar{t},\bar{x})\in[t_0,T]\times K}{(t,x)\ne(\bar t,\bar x)}} 
\frac{|g(t,x)-g(\bar{t},\bar{x})| }{(|t-\bar{t}|+|x-\bar{x}|)^\rho}.\nonumber
\end{equation} 

Let $0<\rho^\prime<\rho$ and $\mathcal{E}^{\rho^\prime}([t_0,T]\times K)$ be the space of H\"older 
continuous functions $g$ of degree $\rho^\prime$ such that
\begin{equation}
O_g(\delta):=\sup_{|t-s|+|x-y|<\delta}\frac{\vert
g(t,x)-g(s,y)\vert}{(|t-s|+|x-y|)^{\rho^\prime}} \rightarrow 0, \mbox{ if } \delta\to 0.
\end{equation}
The space $\mathcal{E}^{\rho^\prime}([t_0,T]\times K)$ endowed with the norm $\Vert\cdot\Vert_{\rho^\prime,t_0,K}$ is a Polish 
space and the embedding  $\mathcal{C}^\rho([t_0,T]\times K)\subset \mathcal{E}^{\rho^\prime}([t_0,T]\times K)$ is compact.

Assume that the functions $\varsigma$ and $b$ are Lipschitz continuous and the initial conditions $v_0$, 
$\tilde v_0$ satisfy the assumption a) fro Hypothesis {\bf \ref{HypH1}} below. When $f$ is the Riesz kernel,
i.e. $f(x)=|x|^{-\beta}$, with $\beta \in ]0,2[$, Theorem 4.11 in \cite{dss}
along with \cite[Proposition 2.6]{dalang-quer} give the existence of a random field solution to \eqref{s1.8}
with sample paths in $\mathcal{C}^\rho([0,T]\times K)$, with $\rho\in\left(0,\gamma_1\wedge \gamma_2\wedge\frac{2-\beta}{2}\right)$.
In our case one can prove the existence and uniqueness of the solution to \eqref{s1.1} in the same way as in 
Theorem 4.3 in \cite{dalang-quer}; indeed, in the appendix we sketch a proof of such result (see Teorema \ref{ts5.1}).

For any $t\in(0,T]$, let $\hact=L^2([0,t]; \hac)$. Fix $h\in\hact$ and consider the deterministic evolution equation
\begin{align}
 \label{sm.h}
\Phi^h(t,x) &= X^0(t,x) + \left\langle G(t-\cdot,x-\ast)\varsigma(\Phi^h(\cdot,\ast)) ,h\right\rangle_{\mathcal{H}_t}\nonumber\\
&+\int_0^t ds [G(t-s,\cdot)\star(\Phi^h(s,\cdot))](x).
\end{align}
The main objective in \cite{Delgado--Sanz-Sole012} is to prove that, in the particular case $v_0=\tilde v_0=0$, 
the topological support of the law of the solution to \eqref{s1.8} in the space  $\mathcal{E}^\rho([t_0,T]\times K)$ with
$\rho\in\left(0,\frac{2-\beta}{2}\right)$ is the closure in the H\"older norm of the set $\{\Phi^h, h\in \hac_T\}$, for any
$t_0>0$. (see \cite[Theorem 3.1]{Delgado--Sanz-Sole012}). In \cite{delgado-ss-14} they obtained a similar result for the case of non zero
initial conditions but restricted to the case of the function $\varsigma$ being a linear function, this is called the affine case.
Their approach is based on the fractional Sobolev imbedding theorem and the Fourier transform technique.

The aim of this paper is to prove a extension of this result allowing non null initial conditions $v_0$, $\tilde v_0$, 
with a multiplicative noise (i.e. we will let $\varsigma$ to be globally Lipschitz) and the covariance functional will 
satisfy the  {\bf Hypothesis \ref{HypH1}.(b)} and {\bf Hypothesis \ref{HypH2}.(a),(c)} (defined below).  Notice that with these assumptions we will
cover the case of the Riesz Kernel (see \cite{h-hu-nu} for more details).The theorem is stated below.

\medskip

We will use the following two sets of hypotheses (see \cite{h-hu-nu}).

\begin{hyp}\label{HypH1}
\begin{itemize}
 \item[{\bf (a)}] $v_o\in C^2(\R^3)$, $v_0$, $\triangledown v_0$ are bounded and $\triangle v_0$ and $\bar{v}_0$ are H\"older 
continuous of orders $\gamma_1$ and $\gamma_2$ respectively, $\gamma_1,\gamma_2\in ]0,1]$.
\\[-8pt]
\item[{\bf (b)}] The function $f$ satisfies condition \eqref{cond-f} and for some $\gamma\in ]0,1]$ and $\gamma'\in ]0,2]$ we have
for all $w\in\R^3$ 
\begin{equation}\label{H1-c}
 \int_{|z|\le 2T} \frac{|f(z+w)-f(z)|}{|z|}dz \le C |w|^\gamma
\end{equation}
and 
\begin{equation}\label{H1-d}
 \int_{|z|\le 2T} \frac{|f(z+w)-2f(z)+f(z-w)|}{|z|}dz \le C |w|^{\gamma'}
\end{equation}
\end{itemize}
 
\end{hyp}

For the H\"older continuity in time we will use, in adition, the followig set of hypothesis.
Let $S^2$ denote the unit sphere in $\R^3$ and $\sigma(d\xi)$ the uniform measure on it.

\begin{hyp}\label{HypH2}
\begin{itemize}
 \item[{\bf (a)}] For some $0<\nu\le 1$, 
 \begin{equation}\label{H2-a}
  \int_{|z|\le h} \frac{f(z)}{|z|}dz \le C |h|^\nu\qquad \mbox{ for any } 0<h\le 2T
 \end{equation}
\\[-8pt]
\item[{\bf (b)}]  For some $0<\kappa_1\le 1$ and for any $q\ge 2$ and $t\in(0,T]$ we have 
\begin{equation}\label{H2-b}
 \E\big(|u(t,x)-u(t,y)|^q 1_{L_n(t)}\big)\le C |x-y|^{q\kappa_1}.
\end{equation}
where $L_n(t)$ is defined in \eqref{localization}. 
\\[-8pt]
\item[{\bf (c)}] Let $\xi$ and $\eta$ unit vectors in $\R^3$ and $0<h\le 1$. We have
\begin{equation}\label{H2-c1}
 \int_{0}^T\int_{S^2} \int_{S^2} \Big|f\big(s(\xi+\eta)+ h(\xi+\eta)\big)-f\big(s(\xi+\eta)+
 h\eta\big)\Big| s\sigma(d\xi) \sigma(d\eta)ds\le C |h|^{\rho_1},
\end{equation}
for some $\rho_1\in ]0,1]$, and 
\begin{eqnarray}\label{H2-c2}
 &&\int_{0}^T\int_{S^2} \int_{S^2} \Big|f\Big(s(\xi+\eta)+ h(\xi+\eta)\Big)-f\Big(s(\xi+\eta)+h\xi\Big)\nonumber\\
 &&\qquad\qquad -f\Big(s(\xi+\eta)+h\eta\Big)+f\Big(s(\xi+\eta)\Big)\Big| s^2\sigma(d\xi) \sigma(d\eta)ds\le C |h|^{\rho_2},
\end{eqnarray}
for some $\rho_2\in ]0,2]$.
\end{itemize}
 
\end{hyp}

Notice that when we prove the results, first we will prove the H\"older continuity in space and after that we will prove the 
 H\"older continuity in time, so at the point we use {\bf (b)} of  the  {\bf hypothesis 2} we will have established it.


\begin{theorem}
\label {sth}
Assume that
\begin{description}
\item {(1)} the functions $\varsigma$ and $b$ are Lipschitz continuous;
\item {(2)} Assume  {\bf Hypothesis 1} and {\bf Hypothesis 2} hold.
\end{description}
Fix  $t_0>0$ and a compact set $K\subset \R^3$. Then the topological support of the law of the solution to \eqref{s1.8} 
in the space  $\mathcal{E}^\rho([t_0,T]\times K)$ with $\rho \in\, \Big]0,\min\Big(\gamma_1,\gamma_2,\gamma,
\tfrac{\gamma'}{2},\tfrac{\nu+1}{2},
\tfrac{\rho_1+\kappa}{2},\tfrac{\rho_2}{2}\Big) \Big[$ 
is the closure in the H\"older norm $\Vert\cdot\Vert_{\rho,t_0,K}$ of the set $\{\Phi^h, h\in \hac_T\}$, where 
$\Phi^h$ is given in \eqref{sm.h} and $\kappa\in\, \Big]0,\min\Big(\gamma_1,\gamma_2,\gamma,
\tfrac{\gamma'}{2}\Big) \Big[$.
\end{theorem}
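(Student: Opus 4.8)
The plan is to follow the by-now classical scheme for support theorems (Stroock--Varadhan, Gy\"ongy, Millet--Sanz-Sol\'e) adapted to this SPDE, reducing the statement to two inclusions. Writing $S=\{\Phi^h: h\in\hact\}$ and $P_u$ for the law of the solution to \eqref{s1.8} on $\mathcal{E}^\rho([t_0,T]\times K)$, I would prove separately that $\operatorname{supp}P_u\subseteq\overline{S}$ and $\overline{S}\subseteq\operatorname{supp}P_u$, the closures being taken in the norm $\Vert\cdot\Vert_{\rho,t_0,K}$. Both inclusions follow from approximation results in probability, and the essential point throughout is to obtain convergence not merely in sup-norm but in H\"older norm; for this I would first establish all estimates in the stronger space $\mathcal{C}^\rho$ and then transfer convergence to $\mathcal{E}^{\rho'}$, $\rho'<\rho$, using the compact embedding $\mathcal{C}^\rho\subset\mathcal{E}^{\rho'}$ recorded above (uniform $\mathcal{C}^\rho$-bounds together with convergence in a weaker topology upgrade to convergence in $\mathcal{E}^{\rho'}$).

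For the inclusion $\operatorname{supp}P_u\subseteq\overline{S}$ I would regularize the noise in time. Partition $[0,T]$ with mesh $1/n$ and replace each driving Brownian motion $W_j$ by a piecewise-linear interpolation $W_j^n$; this turns the stochastic convolution in \eqref{s1.8} into a pathwise integral against an element $\dot h^n(\omega)\in\hact$, so that the resulting solution satisfies $u^n(\omega)=\Phi^{h^n(\omega)}\in S$ for a.e. $\omega$. A feature special to the three-dimensional wave equation, already exploited in \cite{Delgado--Sanz-Sole012} and in \cite{h-hu-nu}, is that no It\^o--Stratonovich correction appears in the limit because of the structure of the fundamental solution $G$; hence the approximations converge to \eqref{s1.8} itself rather than to a corrected equation. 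I would then prove $\Vert u^n-u\Vert_{\rho,t_0,K}\to 0$ in probability. Since $u^n\in S$ and $\operatorname{supp}P_u$ is closed, this yields the first inclusion.

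For the reverse inclusion $\overline{S}\subseteq\operatorname{supp}P_u$ I would fix $h\in\hact$ and use a Girsanov/Cameron--Martin change of measure that shifts the noise by $h$. Under the (equivalent) shifted probability the solution solves an equation whose deterministic part is exactly the skeleton equation \eqref{sm.h}, and a second application of the approximation argument---now letting the genuinely random part of the noise be scaled down---shows that the shifted solution is arbitrarily close to $\Phi^h$ with positive probability, i.e. $\Pb(\Vert u-\Phi^h\Vert_{\rho,t_0,K}<\eta)>0$ for every $\eta>0$. Because $\operatorname{supp}P_u$ is closed this gives $\Phi^h\in\operatorname{supp}P_u$ and hence $\overline{S}\subseteq\operatorname{supp}P_u$.

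The analytic heart of the argument---and the step I expect to be the main obstacle---is the set of uniform H\"older estimates that make the above convergences work. Using the identity \eqref{fundamental} to express $\hac$-norms through the covariance $f$, I would bound the space and time increments of the stochastic convolutions by invoking the mean-H\"older-continuity conditions on $f$: \eqref{H1-c}--\eqref{H1-d} for the spatial increments and \eqref{H2-a}, \eqref{H2-c1}--\eqref{H2-c2} for the temporal ones, exactly in the spirit of \cite{h-hu-nu}. Because $\varsigma$ and $b$ are only globally Lipschitz (not bounded) and the initial data are non-zero, these estimates must be carried out on the localization events $L_n(t)$ appearing in \eqref{H2-b}, and the localization has to be removed at the end; controlling the increments of $u^n$ uniformly in $n$ on these events, in both variables and with the exponents combining to the stated range of $\rho$, is the delicate part. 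Once the uniform bounds $\sup_n\E\big(\Vert u^n\Vert_{\rho,t_0,K}^p\,\mathbf{1}_{L_n}\big)<\infty$ are available together with convergence of $u^n$ to $u$ in a weaker topology, the compact embedding delivers H\"older convergence and the theorem follows.
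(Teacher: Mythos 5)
Your overall architecture coincides with the paper's (the Millet--Sanz-Sol\'e scheme: a smoothed noise $w^n$ for the inclusion $\operatorname{supp}(u\circ\Pb^{-1})\subset\overline{\{\Phi^h\}}$, the shift $T_n^h\omega=\omega+h-w^n$ of \eqref{sm.1} plus Girsanov for the reverse inclusion, and localized $L^p$ increment estimates combined with pointwise $L^p$ convergence to get H\"older convergence in probability). The genuine gap is in how you treat the noise smoothing. You take \emph{piecewise-linear interpolation} of the $W_j$ and assert that no It\^o--Stratonovich correction appears ``because of the structure of the fundamental solution $G$''. The paper states, and relies on, the opposite: in this setting the Wong--Zakai correction terms \emph{explode} and must be avoided rather than shown to vanish. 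Concretely, a symmetric interpolation uses the increment $W_j(\Delta_i)$ on $\Delta_i$ itself, so the pathwise integral $\langle G(t-\cdot,x-\ast)\varsigma(X_n),w^n\rangle_{\mathcal{H}_t}$ anticipates the driving noise on each interval, and the resulting correction is governed by
$2^n\int_0^{2^{-n}}\bigl(\int_{\R^3}G(a,\mathrm{d}y)f(y)\bigr)\mathrm{d}a
= c\,2^n\int_{|z|\le 2^{-n}}\frac{f(z)}{|z|}\,\mathrm{d}z$,
which by \eqref{H2-a} is only of order $2^{n(1-\nu)}$; for the Riesz kernel $f(x)=|x|^{-\beta}$ one has $\nu=2-\beta$, so this diverges for every $\beta\in\,]1,2[$ --- a range the theorem explicitly covers. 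Hence with your choice of $w^n$ the convergence $\|u^n-u\|_{\rho,t_0,K}\to0$ in probability cannot simply be asserted, and in part of the admissible range it should fail; note the same cancellation between the stochastic integral and the pathwise integral against $w^n$ underlies the second convergence \eqref{sm.4}, so the gap propagates to your Girsanov step as well.

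What the paper does instead is not a property of $G$ but a design of the approximation and the localization: in \eqref{s3.3} the increment $W_j(\Delta_i)$ drives $w^n$ only on the \emph{next} interval $\Delta_{i+1}$, and the auxiliary process $X_n^-$ of \eqref{s3.8.2} is $\mathcal{F}_{s_n}$-measurable, so the pathwise integral $\langle G\,B(X_n^-)1_{[0,t_n]},w^n\rangle_{\mathcal{H}_t}$ can be rewritten, via the operator $\pi_n\circ\tau_n$, as a genuine It\^o integral (zero mean, Burkholder-controlled), with no correction at all; the remaining discrepancy $B(X_n)-B(X_n^-)$ is of order $n^{3/2}2^{-n(\nu+1)/2}$ by \eqref{s4.19}, which beats the bound $\|w^n1_{L_n}\|_{\mathcal{H}}\lesssim n^{3/2}2^{n/2}$ of \eqref{s3.101} precisely because of the localization \eqref{localization}. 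This is the content of Lemmas \ref{ls4.2}, \ref{ls4.3} and of the $R_n^3$ terms in Lemma \ref{lss3.1.2} and Proposition \ref{pss3.2.1}, and it is the piece your proposal would have to replace. Two smaller remarks: the paper derives both inclusions from one approximation theorem (Theorem \ref{ts3.1}) for the family \eqref{s3.7}--\eqref{s3.6}, taking $A=D=0$, $B=\varsigma$ (so $X=u$, $X_n=\Phi^{w^n}$) and then $A=D=\varsigma$, $B=-\varsigma$ (so $X=\Phi^h$, $X_n=u\circ T_n^h$), which makes your ``scaled down random part'' precise as an exact cancellation of the $A$- and $B$-terms; and your interpolation/compact-embedding upgrade is replaced there by the moment version, Lemma \ref{ls5.2}, applied to $Y_n=X_n-X$ --- either route works once Theorems \ref{ts3.2} and \ref{ts3.3} are available.
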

 After the seminal paper \cite{stroock},
an extensive literature on support theorems for stochastic differential equations appeared (see for example, 
\cite{aida-kusuoka-stroock}, \cite{g-n-ss}, \cite{millet-ss94b}, and references 
herein). The analysis of the uniqueness of invariant measures is one of the motivations for the characterization 
of the support of stochastic evolution equations (see \cite[Section 1]{Delgado--Sanz-Sole012} for more details). 
 
 As in \cite{Delgado--Sanz-Sole012,delgado-ss-14}, Theorem \ref{sth} will be a corollary of a general result on approximations of 
 Equation \eqref{s1.8} by a sequence of SPDEs obtained by smoothing the noise $M$. The precise statement, given in 
 Theorem \ref{ts3.1}, provides a Wong-Zakai type theorem in H\"older norm. The method 
 relies on \cite{aida-kusuoka-stroock}, further developed and used in
 \cite{Ba-Mi-SS}, \cite{g-n-ss}, \cite{millet-ss94a}, 
 \cite{millet-ss94b}, \cite{milletss2}. We refer the reader to  \cite[Section 1]{Delgado--Sanz-Sole012}
 for a detailed description of the method for the proof of support theorems based on approximations.

As in \cite{Delgado--Sanz-Sole012} (see also \cite{delgado-ss-14}) we apply the approximation method of \cite{millet-ss94b}
to obtain a characterization of the topological support
of the law of
$u$ (the solution to \eqref{s1.8}) in the H\"older norm $\Vert\cdot
\Vert_{\rho,t_0,K}$. The core of the work consists of an
approximation result for a family of equations more general than
equation \eqref{s1.8} by a sequence of pathwise evolution equations
obtained by a smooth approximation of the driving process $M$. In
finite dimensions, the celebrated
Wong--Zakai approximations for diffusions in the supremum norm could be
considered as an analogue. However there are two substantial
differences, first the type of equation we consider in this paper is
much more complex, and moreover we deal with a stronger topology.

For the sake of completeness, we give a brief description of the
procedure of \cite{millet-ss94a} in the particular context of this
work, and refer the reader to \cite{millet-ss94a} for further details.

Let $(\bar\Omega,\bar{\mathcal{G}}, \bar\mu)$ be the canonical
space of a standard real-valued Brownian motion on $[0,T]$. In the
sequel, the reference probability space will be
$(\Omega,\mathcal{G}, \Pb):=(\bar\Omega^{\mathbb{N}},\bar
{\mathcal{G}}^{\otimes\mathbb{N}}, \bar\mu^{\otimes\mathbb
{N}})$. By the preceding identification of $M$ with $(W_j, j\in\mathbb
{N})$, this is the canonical probability space of $M$.

Assume that there exists a measurable mapping $\xi_1: L^2 ([0,T];
{\ell}^2 )\rightarrow\mathcal{C}^\rho([t_0,T]\times K)$, and
a sequence $w^n:  \Omega\rightarrow L^2 ([0,T]; {\ell}^2 )$
such that for every $\epsilon>0$,\vspace*{1.5pt}
%
\begin{equation}
\label{s1.18} \lim_{n\to\infty}\Pb \bigl\{\bigl \Vert u-\xi_1
\bigl(w^n\bigr)\bigr \Vert_{\rho
,t_0,K}>\epsilon \bigr\}=0.
\end{equation}
Then $\operatorname{supp}(u\circ\Pb^{-1}) \subset\overline{\xi_1
(L^2 ([0,T]; {\ell}^2 ) )}$, where the closure refers
to the H\"older norm $\Vert\cdot\Vert_{\rho,t_0,K}$.

Next, we assume that there exists a mapping $\xi_2:  L^2 ([0,T];
{\ell}^2 )\rightarrow\mathcal{C}^\rho([t_0,T]\times K)$ and
for any $h\in L^2 ([0,T]; {\ell}^2 )$, we suppose\vadjust{\goodbreak} that
there exist a sequence $T_n^h:  \Omega\rightarrow\Omega$ of
measurable transformations such that, for any $n\ge1$, the probability
$\Pb\circ(T_n^h)^{-1}$ is absolutely continuous with respect to $\Pb
$ and, for any $h\in L^2 ([0,T]; {\ell}^2 )$, $\epsilon>0$,
%
\begin{equation}
\label{s1.19} \lim_{n\to\infty}\Pb \bigl\{\bigl \Vert u
\bigl(T_n^h\bigr)-\xi_2(h)
\bigr \Vert_{\rho,t_0,K}<\epsilon \bigr\}>0.
\end{equation}
Then $\operatorname{supp}(u\circ\Pb^{-1})\supset\overline{\xi_2
(L^2 ([0,T]; {\ell}^2 ) )}$.

For any $h\in L^2 ([0,T]; {\ell}^2 )$ (or equivalently,
$h\in\hact$), recall the deterministic evolution equation $\Phi^h(t,x)$ given by \eqref{sm.h} 
and similarly as for $u$, the mapping $(t,x)\in[t_0,T]\times K\mapsto\Phi
^h(t,x)$ belongs to $\mathcal{C}^\rho([t_0,T]\times K)$.

Let $\xi_1(h)=\xi_2(h)=\Phi^h$, and $(w^n)_{n\ge1}$ be given by
\eqref{s3.3}.
From \eqref{s3.4} and the isometric representation of $\hact$, we see that
$w^n:  \Omega\rightarrow L^2 ([0,T]; {\ell}^2 )$.
Given $h\in L^2 ([0,T]; {\ell}^2 )$, we define
%
\begin{equation}
\label{sm.1} T_n^h(\omega) = \omega+h-w^n.
\end{equation}
By Girsanov's theorem, the probability $\Pb\circ(T_n^h)^{-1}$ is
absolutely continuous with respect to $\Pb$.

According to \eqref{s1.18}, \eqref{s1.19}, the final objective is to
prove that
\[
\lim_{n\to\infty}\Phi^{w^n}=u,\qquad \lim
_{n\to\infty}u\bigl(T_n^h\bigr)=
\Phi^h,
\]
in probability and with the H\"older norm $\Vert\cdot\Vert_{\rho
,t_0,T}$. Then, by the preceding discussion we infer that
the support of the law of $u$ in the H\"older norm is the closure of
the set of functions $\{\Phi^h, h\in\hact\}$ (see Theorem~\ref
{tsm.1} for the rigorous statement).
Notice that the characterization of the support does not depend on the
approximating sequence $(w^n)_{n\in\mathbb{N}}$.

The structure of this paper is exactly the same as the one in  \cite{Delgado--Sanz-Sole012} and it is structured as follows. 
The next Section~\ref{s3} is
devoted to a general approximation result (see \cite{Delgado--Sanz-Sole012} for the details of the approximation).
Section~\ref{sm} is devoted to the proof of the characterization of
the support of $u$. It is a corollary of Theorem~\ref{ts3.1}.
Section~\ref{s4} is of technical character. It is devoted to establish some
auxiliary results which are needed in some proofs of Section~\ref{s3}.
In the \hyperref[s5]{Appendix},
a theorem on existence and uniqueness of a random
field solution for a quite general evolution equation is proved. It
provides the rigorous setting for all the stochastic partial
differential equations that appear in this paper. The section also
contains a known but fundamental result used at some crucial parts
of the proofs of Sections~\ref{s3} and~\ref{sm}.

As was formulated in \cite{Ba-Mi-SS}, and further developed in \cite
{milletss2}, there are two main elements in the proof of Theorem~\ref
{ts3.1}: a control on the $L^p(\Omega)$-increments in time and in
space of the processes $X$ and $X_n$, independently of $n$, and
$L^p(\Omega)$ convergence of $X_n(t,x)$ to $X(t,x)$, for any $(t,x)$.
The precise assertions are given in Theorems~\ref{ts3.2} and~\ref
{ts3.3}, respectively.

For an It\^o's stochastic differential equation, smoothing the noise
leads to a Stratonovich (or pathwise) type integral, and the correction
term between the two kinds of integrals appears naturally in the
approximating scheme. In our setting, correction terms explode and
therefore they must be avoided. Instead, a control on the growth of the
regularized noise is used.
This method was introduced in \cite{milletss2} and successfully
used in \cite{Delgado--Sanz-Sole012}  and  \cite{delgado-ss-14}.  Here we use it too. 
The control is achieved by introducing a localization
in $\Omega$ (see \eqref{localization}). With this method, the
convergence of the approximating sequence $X_n$ to $X$ takes place in
probability.

Throughout the paper, we shall often call different positive and finite
constants by the same notation, even if they differ from one place to another.

\section{Approximations of the wave equation}
\label{s3}

Consider smooth approximations of $W$ defined as follows. Fix $n\in\IN
$ and consider the partition
of $[0,T]$ determined by $\frac{iT}{2^n}$, $i=0,1,\ldots,2^n$. Denote
by $\Delta_i$ the interval $[\frac{iT}{2^n},
\frac{(i+1)T}{2^n}[$ and by $|\Delta_i|$ its length. We write
$W_j(\Delta_i)$ for the increment $W_j(\frac{(i+1)T}{2^n})-
W_j(\frac{iT}{2^n})$, $i=0,\ldots,2^n-1$, $j\in\IN$. Define
differentiable approximations of $(W_j, j\in\mathbb{N})$ as follows:
\[
W^n= \biggl(W_j^n=\int
_0^\cdot\dot{W}_j^n(s)
\,\mathrm{d}s, j\in\IN \biggr),
\]
where for $j>n$, $\dot{W}_j^n=0$, and for $1\le j\le n$,

\[
\dot{W}_j^n(t)= %
\begin{cases}
 \sum
_{i=0}^{2^n-2} 2^{n}T^{-1}W_j(
\Delta_i)1_{\Delta
_{i+1}}(t) &\quad \text{ if } t\in
\bigl[2^{-n}T,T\bigr],
\\
0 &\quad \text{ if } t \in\bigl [0,2^{-n}T\bigr [.  %
\end{cases}
\]
Set
%
\begin{equation}
\label{s3.3} w^n(t,x)=\sum_{j\in\IN}
\dot{W}_j^n(t) e_j(x).
\end{equation}
It is easy to check that, for any $p\in[2,\infty[$,
%
\begin{equation}
\label{s3.4} \bigl \|w^n\bigr \|_{L^p(\Omega,\mathcal{H}_T)}\le C n^{{\trup{1}{2}}}2^{n/2}.
\end{equation}
In particular, from \eqref{s3.4} it follows that $w^n$ belongs to
$\mathcal{H}_T$ a.s.

In this section, we shall consider the equations
%
\begin{eqnarray}
\label{s3.7}X(t,x) &=&X^0(t,x)+
\int_0^t \int
_{\R^3} G(t-s,x-y) (A+B) \bigl(X(s,y)\bigr) M(\mathrm{d}s,
\mathrm{d}y)
\nonumber
\\
&&{}+\bigl\langle G(t-\cdot,x-\ast)D\bigl(X(\cdot,\ast)\bigr),h\bigr
\rangle_{\mathcal{H}_t}
\\
&&{}+\int_0^t \bigl[G(t-s,
\cdot)\star b\bigl(X(s,\cdot)\bigr) \bigr](x)\, \mathrm{d}s,
\nonumber
\\
\label{s3.6}X_n(t,x) &=&X^0(t,x)+\int_0^t
\int_{\R^3} G(t-s,x-y) A\bigl(X_n(s,y)\bigr) M(
\mathrm{d}s,\mathrm{d}y)
\nonumber
\\
&&{} +\bigl\langle G(t-\cdot,x-\ast)B\bigl(X_n(\cdot,\ast)
\bigr),w^n\bigr\rangle_{\mathcal{H}_t}
\nonumber
\\[-8pt]
\\[-8pt]
&&{}+\bigl\langle G(t-\cdot,x-
\ast)D\bigl(X_n(\cdot,\ast)\bigr),h \bigr\rangle_{\mathcal{H}_t}\nonumber
\\
&&{} +\int_0^t \bigl[G(t-s,\cdot)\star b
\bigl(X_n(s,\cdot)\bigr) \bigr](x) \,\mathrm{d}s,
\nonumber
\end{eqnarray}
where $n\in\IN$, $h\in\mathcal{H}_T$, $w^n$ defined as in (\ref{s3.3}) and $A, B, D, b: \R\to\R$.

Moreover, we also need the slight modification of these equations
defined by
%
\begin{eqnarray}
\label{s3.8.2} X_n^-(t,x) &=&X^0(t,x)+\int_0^{t_n}
\int_{\R^3} G(t-s,x-y) A\bigl(X_n(s,y)\bigr) M(
\mathrm{d}s,\mathrm{d}y)
\nonumber
\\
&&{} +\bigl\langle G(t-\cdot,x-\ast)B\bigl(X_n(\cdot,\ast)
\bigr)1_{[0,t_n]}(\cdot ),w^n\bigr\rangle_{\mathcal{H}_t}
\nonumber
\\[-8pt]
\\[-8pt]
&&{} +\bigl\langle G(t-\cdot,x-\ast)D\bigl(X_n(\cdot,\ast)
\bigr)1_{[0,t_n]}(\cdot ),h\bigr\rangle_{\mathcal{H}_t}
\nonumber
\\
&&{} +\int_0^{t_n} \bigl[G(t-s,\cdot)\star b
\bigl(X_n(s,\cdot)\bigr) \bigr](x)\, \mathrm{d}s,
\nonumber
\\
\label{s3.8.3} X(t,t_n,x) &=&X^0(t,x)+\int_0^{t_n}
\int_{\R^3} G(t-s,x-y) (A+B) \bigl(X(s,y)\bigr) M(\mathrm{d}s,
\mathrm{d}y)
\nonumber
\\
&&{} +\bigl\langle G(t-\cdot,x-\ast)D\bigl(X(\cdot,\ast)\bigr)1_{[0,t_n]}(
\cdot ),h\bigr\rangle_{\mathcal{H}_t}
\\
&&{} +\int_0^{t_n} \bigl[G(t-s,\cdot)\star b
\bigl(X(s,\cdot)\bigr) \bigr](x) \,\mathrm{d}s,
\nonumber
\end{eqnarray}
where for any $n\in\IN$, $t\in[0,T]$, $t_n= \max\{\underline{t}_n-
2^{-n}T, 0 \}$, with
%
\begin{equation}
\label{s3.8.1} \underline{t}_n = \max \bigl\{k2^{-n}T,
k=1,\ldots,2^n-1: k2^{-n}T\le t \bigr\}.
\end{equation}

We will consider the following assumption during all paper.
\begin{hyp}\label{HypB}
The coefficients $A,B,D,b: \R\mapsto\R$ are
globally Lipschitz continuous.
 
\end{hyp}

Notice that equation \eqref{s3.6} is more general than \eqref{s3.7}
and \eqref{s1.8}. In Theorem~\ref{ts5.1}, we prove a result on
existence and uniqueness of a random field solution to
a class of SPDEs which applies to equation \eqref{s3.6}.

\begin{remark}
\label{rs3.1}
In opposition to Remark A.2 in \cite{Delgado--Sanz-Sole012}, here we have not the translation invariance of the moments.
Following \cite{h-hu-nu}, we have the following facts that will be used at several points in the proofs of the results of 
the following section. 

For any $z\in\R^3$, set $z_1=x-z$ and $z_2=y-z$ then trivially $z_1-z_2=x-y$. Note the folowing.
First, we see that 
\begin{eqnarray}
\label{s3.est} \sup_{z\in \R^3}\E \bigl(\bigl \vert X(t,x-z)-X(t,y-z)\bigr \vert^p \bigr)&=&
\sup_{z_1-z_2=x-y}\E \bigl(\bigl \vert X(t,z_1)-X(t,z_2)\bigr \vert^p \bigr),
\end{eqnarray}

Secondly, from $z_1=x-z$ and $z_2=y-z$ we have $x=z_1+z$ and $y=z_2+z$, then
\[
 \E \bigl(\bigl \vert X(t,x)-X(t,y)\bigr \vert^p \bigr)=\E \bigl(\bigl \vert X(t,z_1+z)-X(t,z_2+z)\bigr \vert^p \bigr)
\]
and again $x-y=z_1-z_2$ then 

\begin{eqnarray}
\label{s3.est1} \sup_{z\in \R^3}\E \bigl(\bigl \vert X(t,z_1+z)-X(t,z_2+z)\bigr \vert^p \bigr)&=&
\sup_{x-y=z_1-z_2 }\E \bigl(\bigl \vert X(t,x)-X(t,y)\bigr \vert^p \bigr),
\nonumber
\\[-8pt]
\\[-8pt]
&=&\sup_{z_1-z_2=x-y} \E \bigl(\bigl \vert X(t,z_1)-X(t,z_2)\bigr \vert^p \bigr),
\nonumber
\end{eqnarray}

for any $x,y,z_1,z_2\in\IR^3$ and any $p\in[1,\infty[$. Consequently, a
similar property also holds for $X_n^-(t,\ast)$ and $X_n(t,t_n,\ast)$
defined in \eqref{s3.8.2}, \eqref{s3.8.3}, respectively
\end{remark}

The aim of this section is to prove the following theorem.

\begin{theorem}
\label{ts3.1}
We assume {\bf (Hypothesis 1)} and {\bf Hypothesis 2} holds. Fix $t_0>0$ and a compact set $K\subset\R
^3$. Set $\gamma,\gamma_1,\gamma_2,\rho_1,\nu\in ]0,1[ $,\,\, $\gamma',\rho_2\in ]0,2[$ and
$\kappa \in\, \Big]0,\min\Big(\gamma_1,\gamma_2,\gamma,\tfrac{\gamma'}{2}\Big) \Big[$. Then for any
$\rho \in\, \Big]0,\min\Big(\gamma_1,\gamma_2,\gamma,\tfrac{\gamma'}{2},\tfrac{\nu+1}{2},
\tfrac{\rho_1+\kappa}{2},\tfrac{\rho_2}{2}\Big) \Big[$
and $\lambda>0$,
%
\begin{equation}
\label{s3.9} \lim_{n\to\infty} \Pb \bigl(\|X_n-X
\|_{\rho,t_0,K}> \lambda \bigr)=0.
\end{equation}
\end{theorem}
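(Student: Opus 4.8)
The plan is to deduce \eqref{s3.9} from two uniform-in-$n$ ingredients, following the scheme of \cite{millet-ss94b} recalled above: a control of the $L^p(\Omega)$-increments in time and space of $X$ and $X_n$ that is \emph{independent of $n$} (Theorem~\ref{ts3.2}), and the pointwise $L^p(\Omega)$-convergence $X_n(t,x)\to X(t,x)$ for each fixed $(t,x)$ (Theorem~\ref{ts3.3}). Since the regularized noise $w^n$ grows with $n$ by \eqref{s3.4}, the increment bounds are only available after localizing on the sets of \eqref{localization}. The first step is therefore to fix $\rho'$ with $\rho<\rho'<\min\big(\gamma_1,\gamma_2,\gamma,\tfrac{\gamma'}{2},\tfrac{\nu+1}{2},\tfrac{\rho_1+\kappa}{2},\tfrac{\rho_2}{2}\big)$ and, for $p$ large enough, to convert the space–time increment estimates of Theorem~\ref{ts3.2} via the Garsia–Rodemich–Rumsey (Kolmogorov-type) lemma into the uniform bound
\[
\sup_{n}\,\E\big(\big\|X_n\big\|_{\rho',t_0,K}^{p}\,1_{L_n}\big)+\E\big(\|X\|_{\rho',t_0,K}^{p}\big)<\infty .
\]
In particular the $\rho'$-H\"older seminorm of the difference $X_n-X$ is bounded in $L^p$, uniformly in $n$, on $L_n$.

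Next I would exploit the elementary interpolation between H\"older exponents: splitting pairs $(t,x),(\bar t,\bar x)$ according to whether their distance is below or above a threshold and optimizing gives, for every $g\in\mathcal{C}^{\rho'}([t_0,T]\times K)$,
\[
\|g\|_{\rho,t_0,K}\le C\Big(\|g\|_{\infty}+\|g\|_{\infty}^{1-\rho/\rho'}\,[g]_{\rho',t_0,K}^{\rho/\rho'}\Big),
\]
where $\|g\|_\infty$ is the supremum norm over $[t_0,T]\times K$ and $[g]_{\rho',t_0,K}$ the $\rho'$-seminorm. Applying this to $g=X_n-X$ reduces the problem to proving $\|X_n-X\|_{\infty}\to 0$ in probability on $L_n$, the seminorm factor being kept bounded by the first step. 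To obtain this supremum convergence I would cover $[t_0,T]\times K$ by a finite $\eta$-net: at the net points Theorem~\ref{ts3.3} gives $L^p$-convergence (hence convergence in probability) of $X_n-X$ to $0$, while between net points the uniform $\rho'$-H\"older bound supplies the equicontinuity needed to pass from the net to the full supremum, letting $n\to\infty$ first and then $\eta\to0$.

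Finally I would reinstate the localization by writing
\[
\Pb\big(\|X_n-X\|_{\rho,t_0,K}>\lambda\big)\le \Pb\big(\{\|X_n-X\|_{\rho,t_0,K}>\lambda\}\cap L_n\big)+\Pb\big(L_n^{c}\big).
\]
The second term tends to $0$ because the localization in \eqref{localization} is designed so that $\Pb(L_n^{c})\to0$, and the first term tends to $0$ by the two preceding steps together with Markov's inequality; this yields \eqref{s3.9}.

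The step I expect to be genuinely hard is not this assembly, which is the standard Millet–Sanz-Sol\'e glue, but the uniform increment bounds behind Theorem~\ref{ts3.2}, and especially the bounds on the \emph{time} increments. There the singular wave kernel $G(t,\mathrm{d}x)=\tfrac{1}{4\pi t}\sigma_t(\mathrm{d}x)$ must be paired with the mean-H\"older estimates of Hypothesis~\ref{HypH2}, while the approximating field $w^n$ is controlled by the localization rather than by a Stratonovich correction term, which as noted above would explode in this setting. Carrying out that interplay uniformly in $n$ is where Remark~\ref{rs3.1} and the covariance hypotheses do the real work.
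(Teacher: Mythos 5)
Your proposal is correct, and it rests on exactly the same pillars as the paper's proof: the uniform-in-$n$ localized increment bounds of Theorem~\ref{ts3.2}, the pointwise convergence of Theorem~\ref{ts3.3}, and the final removal of the localization via $\Pb(L_n(T)^c)\to 0$ together with Chebyshev's inequality. The difference is in the middle step. The paper's proof of Theorem~\ref{ts3.1} is very short: it sets $Y_n=X_n-X$, $B_n(t)=L_n(t)$, observes that Theorems~\ref{ts3.2} and~\ref{ts3.3} give conditions (P1) and (P2) of Lemma~\ref{ls5.2} (the Millet--Sanz-Sol\'e lemma quoted in the Appendix, from \cite{milletss2}) with $\delta=p\rho-4$ for $p$ large, and concludes the $L^p$ convergence \eqref{s5.2} of the localized H\"older norm, after which Chebyshev finishes. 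You instead unfold the content of that lemma: Garsia--Rodemich--Rumsey at a strictly larger exponent $\rho'$ to get $\sup_n\E\big(\|X_n\|^p_{\rho',t_0,K}1_{L_n(T)}\big)<\infty$, interpolation of the $\rho$-norm between the sup-norm and the $\rho'$-seminorm, and a net argument converting the pointwise convergence of Theorem~\ref{ts3.3} into sup-norm convergence in probability on $L_n(T)$. This is essentially a proof of Lemma~\ref{ls5.2} rather than a citation of it, so the mathematical content coincides; what you trade is the strength of the intermediate conclusion ($L^p$ convergence of the localized H\"older norm in the paper, convergence in probability in your version --- both sufficient for \eqref{s3.9}) against self-containedness. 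Two small points to make your argument airtight: your "$p$ large enough" in the GRR step plays exactly the role of the paper's requirement $\delta=p\rho-4>0$, i.e.\ $p>4/\rho$, which is available because Theorem~\ref{ts3.2} holds for all $p$ and all admissible $\rho$; and the localizing event in your uniform bound and in your net argument must be $L_n(T)$, the smallest member of the decreasing family $\{L_n(t)\}_{t}$, so that the increment estimates of Theorem~\ref{ts3.2} (stated with the factor $1_{L_n(\bar t)}$) and the pointwise convergence (stated with $1_{L_n(t)}$) can all be invoked on one common event.
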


The convergence \eqref{s3.9} will be proved through several steps.
The main ingredients are local $L^p$ estimates of increments of $X_n$
and $X$, in time and in space, and a local $L^p$
convergence of the sequence $X_n(t,x)$ to $X(t,x)$.

Let us describe the \textit{localization} procedure (see \cite
{milletss2}). Fix $\alpha>0$. For any integer $n\ge1$ and $t\in
[0,T]$, define
%
\begin{equation}
\label{localization} L_{n}(t)= \Bigl\{\sup_{1\le j\le n} \sup
_{0\le i\le[2^nt
T^{-1}-1]^+} \bigl |W_j(\Delta_i)\bigr |\le\alpha
n^{\trup{1}{2}}2^{-\trup
{n}{2}} \Bigr\},
\end{equation}
where $\alpha>(2\ln2)^{\trup{1}{2}}$. Notice that the sets $L_n(t)$
decrease with $t\ge0$. Moreover, in \cite{milletss2}, Lemma~2.1, it is
proved that $\lim_{n\to\infty} \Pb(L_n(t)^c)=0$.

It is easy to check that
%
\begin{equation}
\label{s3.101} \bigl \Vert w^n(t,\ast)1_{L_n(t)}
\bigr \Vert_{\mathcal{H}}\le C n^{\trup{3}{2}} 2^{\trup{n}{2}}.
\end{equation}
Moreover, for any $0\le t\le t'\le T$
\[
\bigl\llVert w^{n}1_{L_n(t')}1_{[t,t']}\bigr\rrVert
_{\mathcal{H}_T}\le Cn^{\trup{3}{2}}2^{\trup{n}{2}}\bigl |t'-t\bigr |^{\trup{1}{2}}.
\]
In particular, if $[t,t']\subset\Delta_i$ for some $i=0,\ldots
,2^n-1$, then
%
\begin{equation}
\label{s3.14} \bigl\llVert w^{n}1_{L_n(t')}1_{[t,t']}
\bigr\rrVert _{\mathcal{H}_T}\le Cn^{\trup{3}{2}}.
\end{equation}


As has been announced in the \hyperref[s1]{Introduction}, the proof of Theorem~\ref
{ts3.1} will follow from Theorems~\ref{ts3.2} and~\ref{ts3.3} below.
We denote by $\Vert\cdot\Vert_p$ the
$L^p(\Omega)$ norm.
%
\begin{theorem}
\label{ts3.2}
We assume {\bf (Hypothesis 1)} and {\bf Hypothesis 2} holds. Fix $t_0\in\,]0, T[$ and a compact subset
$K\subset\IR^3$. Let $t_0\le t\le\bar{t}\le T$, $x,\bar{x}\in K$. Set Set $\gamma,\gamma_1,\gamma_2,\rho_1,\nu\in ]0,1[ $,
\,\, $\gamma',\rho_2\in ]0,2[$ and  $\kappa \in\, 
\Big]0,\min\Big(\gamma_1,\gamma_2,\gamma,\tfrac{\gamma'}{2}\Big) \Big[$.
Then, for any
$p\in[1,\infty)$ and any $\rho \in\, \Big]0,\min\Big(\gamma_1,\gamma_2,\gamma,\tfrac{\gamma'}{2},\tfrac{\nu+1}{2},
\tfrac{\rho_1+\kappa}{2},\tfrac{\rho_2}{2}\Big) \Big[$,
there exists a positive constant $C$ such that
%
\begin{equation}
\label{s3.15} \sup_{n\ge1} \bigl\llVert \bigl[X_n(t,x)-X_n(
\bar{t},\bar{x}) \bigr] 1_{L_n(\bar{t})} \bigr\rrVert _p \le C \bigl( |
\bar{t}- t |+ |\bar{x}-x | \bigr)^{\rho}.
\end{equation}
\end{theorem}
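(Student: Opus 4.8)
The plan is to derive \eqref{s3.15} from two inputs: uniform-in-$n$ moment bounds $\sup_{n}\sup_{(s,y)}\E\bigl(|X_n(s,y)|^p1_{L_n(s)}\bigr)<\infty$, which come from the existence argument of Theorem~\ref{ts5.1} (the localization on $L_n(\cdot)$ being exactly what keeps the contribution of the regularized noise $w^n$ finite), and separate spatial and temporal increment estimates. Since the sets $L_n(\cdot)$ decrease in time, one has $1_{L_n(\bar t)}\le 1_{L_n(t)}$, so inserting the intermediate value $X_n(t,\bar x)$ and taking absolute values before the $L^p$-norm gives
\[
\bigl\|[X_n(t,x)-X_n(\bar t,\bar x)]1_{L_n(\bar t)}\bigr\|_p \le \bigl\|[X_n(t,x)-X_n(t,\bar x)]1_{L_n(t)}\bigr\|_p + \bigl\|[X_n(t,\bar x)-X_n(\bar t,\bar x)]1_{L_n(\bar t)}\bigr\|_p .
\]
It thus suffices to bound the spatial increment by $C|x-\bar x|^{\rho}$ and the temporal one by $C|\bar t-t|^{\rho}$, uniformly in $n$; and since $K$ is compact and $T$ finite, any exponent $\ge\rho$ may be downgraded to $\rho$, so for each increment I aim for the largest exponent the hypotheses permit.

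For the spatial increment I would treat the five contributions in \eqref{s3.6} separately. The deterministic term $X^0(t,x)-X^0(t,\bar x)$ is H\"older of order $\min(\gamma_1,\gamma_2)$ by Hypothesis~1(a) and the mapping properties of the wave propagator. For the stochastic $A$-term I use $1_{L_n(t)}\le 1_{L_n(s)}$ to insert the adapted factor $1_{L_n(s)}$ inside the integral, apply the Burkholder--Davis--Gundy inequality, and rewrite the resulting $\mathcal{H}$-norm through the fundamental identity \eqref{fundamental} as a double integral of the signed kernel $[G(t-s,x-\cdot)-G(t-s,\bar x-\cdot)]$ against $f$. The essential maneuver is to shift the spatial argument by $x-\bar x$ so as to align the two wave cones: this splits the coefficient into $A(X_n(s,y))-A(X_n(s,y+x-\bar x))$, bounded by $L\,|X_n(s,y)-X_n(s,y+x-\bar x)|$ via Lipschitzianity and feeding (through the translation reduction \eqref{s3.est1} of Remark~\ref{rs3.1}) the quantity $J_n(s):=\sup_{z}\E\bigl(|X_n(s,z_1+z)-X_n(s,z_2+z)|^p1_{L_n(s)}\bigr)$, plus an aligned ``free'' part whose cancellation is preserved and controlled by the mean-H\"older conditions \eqref{H1-c}--\eqref{H1-d}, yielding exponent $\min(\gamma,\tfrac{\gamma'}{2})$. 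The $h$-term is handled by Cauchy--Schwarz in $\mathcal{H}_t$ against the fixed element $h$ together with the same kernel estimate, and the $b$-drift by Minkowski's inequality and a short kernel computation. Collecting everything gives $J_n(t)\le C|x-\bar x|^{p\,\theta}+C\int_0^t J_n(s)\,\varphi(t-s)\,ds$ with $\theta=\min(\gamma_1,\gamma_2,\gamma,\tfrac{\gamma'}{2})$ and $\varphi$ integrable, and Gr\"onwall's lemma closes the spatial estimate with exponent at least $\kappa$.

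The temporal increment is estimated analogously, now splitting each kernel time-difference; for the stochastic term, for instance, $\int_0^{t}[G(t-s,\cdot)-G(\bar t-s,\cdot)]A(X_n)\,M$ plus the ``new mass'' $\int_{t}^{\bar t}G(\bar t-s,\cdot)A(X_n)\,M$. After BDG and \eqref{fundamental}, the new-mass piece is bounded using \eqref{H2-a}, producing the exponent $\tfrac{\nu+1}{2}$, while the genuine time-increment of $G$ is expanded through the spherical representation $G(t,dx)=\tfrac{1}{4\pi t}\sigma_t(dx)$ and controlled by the mean-H\"older conditions \eqref{H2-c1}--\eqref{H2-c2}, producing the exponents $\tfrac{\rho_1+\kappa}{2}$ and $\tfrac{\rho_2}{2}$; here the spatial gain $|x-\bar x|^{\kappa}$ is replaced by the already-established spatial H\"older continuity of $X_n$ (equivalently the estimate \eqref{H2-b}), which is precisely why the spatial estimate must be proved first. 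A further Gr\"onwall argument then gives the temporal bound with exponent $\min(\tfrac{\nu+1}{2},\tfrac{\rho_1+\kappa}{2},\tfrac{\rho_2}{2})$, and combining the two bounds yields \eqref{s3.15} with the stated $\rho$.

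The main obstacle is the contribution of the regularized noise $w^n$ in \eqref{s3.6}: a crude Cauchy--Schwarz pairing explodes because $\|w^n1_{L_n(t)}\|_{\mathcal{H}_t}\sim n^{3/2}2^{n/2}$. Following the localization device of \cite{Delgado--Sanz-Sole012}, the remedy is to decompose the $\mathcal{H}_t$-pairing over the mesh intervals $\Delta_i$ and invoke the per-interval bound \eqref{s3.14}, $\|w^{n}1_{L_n(\cdot)}1_{\Delta_i}\|_{\mathcal{H}_t}\le Cn^{3/2}$, in which the exponential factor is absent. One must then verify that the accompanying kernel factors $\|[G(t-\cdot,x-\ast)-G(t-\cdot,\bar x-\ast)]B(X_n)1_{\Delta_i}\|_{\mathcal{H}_t}$, summed over the $O(2^n)$ intervals, carry enough smallness — from the short length $2^{-n}T$ of each interval together with \eqref{H2-a} and \eqref{H1-c}--\eqref{H1-d} — to absorb the polynomial growth in $n$ and to reproduce the H\"older factor uniformly in $n$. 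Balancing these contributions so that no Stratonovich-type correction term is needed is the delicate point of the argument.
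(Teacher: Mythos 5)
Your global architecture --- splitting into spatial and temporal increments via the monotonicity $1_{L_n(\bar t)}\le 1_{L_n(t)}$, the kernel decomposition of \cite{dss} transferring increments of $G$ to increments of the process and of $f$, the use of Hypotheses \ref{HypH1}(b) and \ref{HypH2}(a),(c), proving the spatial estimate first so that it feeds the temporal one with exponent $\kappa$, and closing with Gronwall --- coincides with the paper's (Propositions \ref{pss3.1.1} and \ref{pss3.2.1}, via Lemmas \ref{lss3.1.1} and \ref{lss3.2.1}). The gap is exactly at the point you yourself flag as delicate: the pairing $\langle[G(t-\cdot,x-\ast)-G(t-\cdot,\bar x-\ast)]B(X_n(\cdot,\ast)),w^n\rangle_{\mathcal{H}_t}$. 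Your remedy --- decompose over the mesh intervals $\Delta_i$ and invoke the per-interval bound \eqref{s3.14} --- cannot work. Per interval, Cauchy--Schwarz gives $Cn^{3/2}\Vert[G(t-\cdot,x-\ast)-G(t-\cdot,\bar x-\ast)]B(X_n)1_{\Delta_i}\Vert_{\mathcal{H}_t}$, and the kernel factor over a time window of length $2^{-n}T$ is only of order $2^{-n/2}|x-\bar x|^{\theta}$, since the $\mathcal{H}_t$-norm scales like the square root of the window length; summing the $2^n$ windows produces $n^{3/2}2^{n/2}|x-\bar x|^{\theta}$, i.e.\ precisely the explosion you set out to avoid. (Equivalently, applying Cauchy--Schwarz to the sum over $i$ merely reassembles $\Vert w^n1_{L_n(t)}\Vert_{\mathcal{H}_T}\sim n^{3/2}2^{n/2}$.) The bound \eqref{s3.14} is useful only when the whole time integration is confined to one or two consecutive mesh intervals, as for the term $T_{n,2}$ in Lemma~\ref{ls4.2}, where the window $[t_n,t]$ has length at most $2\cdot2^{-n}T$.

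What the paper actually does (proof of Lemma~\ref{lss3.1.2}, terms $R_n^2$ and $R_n^3$) rests on two ingredients absent from your proposal. First, one splits $B(X_n)=B(X_n^-)+[B(X_n)-B(X_n^-)]$ with $X_n^-$ defined in \eqref{s3.8.2}. Since $X_n^-(s,\ast)$ is $\mathcal{F}_{s_n}$-measurable and $w^n$ is built from Brownian increments on the preceding mesh interval, the pairing of the $B(X_n^-)$ part with $w^n$ can be rewritten, through the operator $\pi_n\circ\tau_n$, as a genuine stochastic integral with respect to $M$; Burkholder's inequality then yields a bound of the same type as for the $A$-term, with no exploding factor at all. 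Second, for the remainder $\hat B(X_n)=B(X_n)-B(X_n^-)$ one does use the crude global bound \eqref{s3.101}, $\Vert w^n1_{L_n(t)}\Vert_{\mathcal{H}}\le Cn^{3/2}2^{n/2}$, but compensates it with the discrepancy estimate \eqref{s4.19} of Lemma~\ref{ls4.3}, $\Vert(X_n(t,x)-X_n^-(t,x))1_{L_n(t)}\Vert_p\le Cn^{3/2}2^{-n(\nu+1)/2}$, which produces the vanishing sequence $f_n=n^{3p}2^{-np\nu/2}$. This also changes the shape of the final inequality: the Gronwall input contains the additive term $f_n$ rather than a pure H\"older bound, and Proposition~\ref{pss3.1.2} needs an additional argument to convert it into \eqref{s3.28} uniformly in $n$. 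Without the $X_n^-$ splitting, the projection trick, and Lemma~\ref{ls4.3} (whose proof is itself nontrivial and supplies the uniform moment bound \eqref{s4.18} you take for granted), neither the spatial estimate nor the temporal one --- whose term $R_n^3(t,\bar t,x)$ has the same structure --- can be closed.
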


\begin{theorem}
\label{ts3.3}
The assumptions are the same as in Theorem~\ref{ts3.2}.
Fix $t\in[t_0,T]$, $x\in\IR^3$. Then, for any
$p\in[1,\infty)$
%
\begin{equation}
\label{s3.16} \lim_{n\to\infty} \mathop{\sup_{{t\in[t_0,T]}}}_{{x\in K(t)}}\bigl \|
\bigl(X_n(t,x)-X(t,x) \bigr) 1_{L_n(t)}\bigr \|_p =0,
\end{equation}
where for $t\in[0,T]$,
\[
K(t)= \bigl\{x\in\R^3: d(x,K)\le T-t\bigr\},
\]
and $d$ denotes the Euclidean distance.
\end{theorem}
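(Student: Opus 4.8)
The plan is to establish \eqref{s3.16} by a Wong--Zakai comparison performed on the localization set $L_n(t)$, in the spirit of \cite{milletss2} and \cite{Delgado--Sanz-Sole012}. The starting point is the three-term decomposition
\begin{align*}
X_n(t,x)-X(t,x) &= \bigl[X_n(t,x)-X_n^-(t,x)\bigr]\\
&\quad + \bigl[X_n^-(t,x)-X(t,t_n,x)\bigr] + \bigl[X(t,t_n,x)-X(t,x)\bigr],
\end{align*}
where $X_n^-$ and $X(t,t_n,\cdot)$ are the modified processes introduced in \eqref{s3.8.2} and \eqref{s3.8.3}. The two outer brackets are \emph{tail} terms built from integrals over the short interval $[t_n,t]$, whose length is at most $2\cdot 2^{-n}T$. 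Each contribution (the regularized $B$-term, the $A$-driven stochastic integral, the $D$-term and the $b$-drift) I would estimate separately in $L^p(\Omega)$, using the localization bound \eqref{s3.14} for the $w^n$-driven piece and uniform moment estimates for $X_n$ and $X$ for the rest. Since $[t_n,t]$ shrinks like $2^{-n}$ while $G(t-s,\cdot)$ contributes a finite, integrable kernel, both outer brackets tend to $0$ in $L^p$ on $L_n(t)$ as $n\to\infty$, uniformly in $(t,x)$.

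The core of the argument is the middle bracket $X_n^-(t,x)-X(t,t_n,x)$, where the genuine Wong--Zakai phenomenon appears. On $[0,t_n]$ the two processes differ through the Lipschitz terms $A$, $D$, $b$---which will be absorbed by a Gronwall argument---and, decisively, through the confrontation of the regularized term $\langle G(t-\cdot,x-\ast)B(X_n(\cdot,\ast))1_{[0,t_n]},w^n\rangle_{\hact}$ with the It\^o integral $\int_0^{t_n}\!\int_{\R^3}G(t-s,x-y)B(X(s,y))\,M(ds,dy)$. Here I would exploit the \emph{one-interval shift} built into the regularization: by construction $\dot W_j^n$ equals $2^nT^{-1}W_j(\Delta_i)$ on $\Delta_{i+1}$, so on each dyadic block the smoothed noise is driven by the increment of the \emph{previous} block. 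Rewriting the $\hact$-inner product as a sum over blocks and carrying out a summation-by-parts against the martingale increments $W_j(\Delta_i)$, one matches this term with the stochastic integral up to remainders that are quadratic in the Brownian increments. On $L_n(t)$ those increments are bounded by $\alpha n^{1/2}2^{-n/2}$ by \eqref{localization}, and this is precisely what forces the remainders to be small and prevents the appearance of an exploding Stratonovich correction.

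Having controlled the difference of the stochastic terms, I would close the argument by a Gronwall-type iteration. Setting $M_n(t)=\sup_{x\in K(t)}\bigl\|\bigl(X_n^-(t,x)-X(t,t_n,x)\bigr)1_{L_n(t)}\bigr\|_p^p$ and invoking the global Lipschitz property of $A,B,D,b$ together with the increment and moment bounds of Theorem~\ref{ts3.2}, one reaches an inequality of the form $M_n(t)\le \varepsilon_n + C\int_0^t M_n(s)\,J(t-s)\,ds$, where $\varepsilon_n\to 0$ collects the tail and Wong--Zakai remainders and $J$ is the integrable kernel generated by $G$ (as in the auxiliary estimates of Section~\ref{s4}). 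The convolution form of Gronwall's lemma then gives $M_n(t)\to 0$. Uniformity over $x\in K(t)$ and $t\in[t_0,T]$ is governed by the finite propagation speed of the wave operator: the solution at $(t,x)$ with $x\in K(t)$ depends only on data inside a fixed compact, so all intermediate estimates can be taken uniform over the enlarged compacts $K(t)$.

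The step I expect to be the main obstacle is the middle bracket, specifically bounding $\langle G(t-\cdot,x-\ast)B(X_n(\cdot,\ast))1_{[0,t_n]},w^n\rangle_{\hact}-\int_0^{t_n}\!\int_{\R^3}G B(X)\,M(ds,dy)$ in $L^p$ on $L_n(t)$. A naive smoothing would produce a Stratonovich correction which, in this spatially correlated, measure-valued setting, \emph{diverges}; the whole role of the shifted regularization and the localization \eqref{localization} is to arrange the summation-by-parts so that this correction never materializes and only genuinely small, localization-controlled remainders survive. Managing the bookkeeping of these block sums and $\hact$-inner products while keeping every bound uniform in $x\in K(t)$ is the technically delicate heart of the proof.
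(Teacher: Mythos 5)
Your overall architecture is essentially the paper's: your two outer brackets are exactly what Lemmas \ref{ls4.1}--\ref{ls4.3} control (estimates \eqref{s4.4} and \eqref{s4.19}), your block-wise summation-by-parts is what the paper formalizes through the operator $\pi_n\circ\tau_n$ in Section \ref{ss3.3} (the $w^n$-pairing of the \emph{frozen} integrand is a genuine It\^o integral of $(\pi_n\circ\tau_n)\bigl[G(t-\cdot,x-\ast)B(X^-(\cdot,\ast))\bigr]$, precisely because $X^-(s,\ast)$ is $\mathcal{F}_{s_n}$-measurable and $\dot W_j^n$ on $\Delta_{i+1}$ is built from $W_j(\Delta_i)$), and the closing Gronwall step is the same. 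So the skeleton is sound and close to the paper's eight-term decomposition $U_n^1,\dots,U_n^8$.

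There is, however, a genuine gap at the heart of your middle bracket. You claim that the matching of $\langle G(t-\cdot,x-\ast)B(X_n(\cdot,\ast))1_{[0,t_n]},w^n\rangle_{\hact}$ with $\int_0^{t_n}\int_{\R^3}G\,B(X)\,dM$ holds ``up to remainders quadratic in the Brownian increments'' which \eqref{localization} makes small. That is not what happens, on two counts. First, the quadratic-type remainder (the pairing of $B(X_n)-B(X_n^-)$ against $w^n$, the paper's term $U_n^4$) is \emph{not} small by localization alone: the bound $|W_j(\Delta_i)|\le\alpha n^{1/2}2^{-n/2}$ summed over the $2^n$ blocks gives an estimate of order $n2^n$, i.e.\ the naive correction still explodes; the actual estimate combines Cauchy--Schwarz, \eqref{s3.101} and \eqref{s4.19}, so the decisive decay $2^{-np(\nu+1)/2}$ comes from Hypothesis \ref{HypH2}(a) via Lemma \ref{ls4.2}, not from \eqref{localization}. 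Second, and more seriously, once the pairing is rewritten as an It\^o integral the surviving error is not quadratic in increments at all: it is the integrand-approximation error $(\pi_n\circ\tau_n-I_{\hact})\bigl[G(t-\cdot,x-\ast)B(X^-(\cdot,\ast))\bigr]$ inside a stochastic integral. Its time-shift part admits a rate via the time regularity of $X$ (\eqref{s3.43} together with \eqref{s4.4}), but its projection part does not: $w^n$ carries only the spatial modes $e_j$ with $j\le n$ and block averages in time, and $\pi_n\to I$ only strongly on $\hact$, with no rate whatsoever, so no localization or increment bound can control $(\pi_n-I)[G\,B(X^-)]$. The paper handles this term ($U_n^{6,3}$) by proving the uniform moment bound \eqref{s3.87}, using that the projection errors decrease monotonically in $n$, establishing H\"older continuity of the resulting functions of $(t,x)$, and invoking Dini's theorem to obtain the uniform convergence \eqref{s3.88}. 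Your proposal contains no counterpart of this step, so the quantity $\varepsilon_n$ in your Gronwall inequality is never shown to tend to zero, and the argument as written cannot be closed.
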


The proof of Theorem~\ref{ts3.2} is carried out through two steps.
First, we shall consider $t=\bar t$ and obtain \eqref{s3.15},
uniformly in $t\in[t_0,T]$. Using this, we will consider $x=\bar x$
and establish \eqref{s3.15}, uniformly in $x\in K$. We devote the next
two subsections to the proof of these results.

\subsection{Increments in space}
\label{ss3.1}

Throughout this section, we fix $t_0\in\,]0,T[$ and a compact set
$K\subset\IR^3$. The objective is to prove the following proposition.
%
\begin{proposition}
\label{pss3.1.1}
Suppose that {\bf Hypothesis 1} holds. Fix $t\in[t_0,T]$ and
$x,\bar{x}\in K$. Then, for any $p\in[1,\infty)$ and $\rho \in\, \Big]0,
\min\Big(\gamma_1,\gamma_2,\gamma,\tfrac{\gamma'}{2}\Big) \Big[$, 
there exists a finite constant $C$ such that
%
\begin{equation}
\label{s3.17} \sup_{n\ge0} \sup_{t\in[t_0,T]}\bigl \|
\bigl(X_n(t,x)-X_n(t,\bar{x}) \bigr) 1_{L_n(t)}
\bigr \|_p \le C |x-\bar{x}|^{\rho}.
\end{equation}
\end{proposition}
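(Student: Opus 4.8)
The plan is to control the spatial increment $\bigl(X_n(t,x)-X_n(t,\bar x)\bigr)1_{L_n(t)}$ in $L^p(\Omega)$ by decomposing $X_n$ according to its defining equation \eqref{s3.6}: the deterministic initial-condition term $X^0$, the stochastic integral with coefficient $A$, the two deterministic-convolution-type terms carrying $B$ (paired with $w^n$) and $D$ (paired with $h$), and the final drift term with $b$. Each of these five pieces will be estimated separately, and the final bound $C|x-\bar x|^\rho$ will follow by a Gronwall-type argument since each increment of $X_n$ will be bounded by $C|x-\bar x|^\rho$ plus a time-integral of the same quantity in the solution itself.

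First I would handle the purely deterministic term $X^0(t,x)-X^0(t,\bar x)$. By \eqref{i.c}, $X^0$ is built from $G(t)\star\tilde v_0$ and $\tfrac{d}{dt}G(t)\star v_0$, and Hypothesis \ref{HypH1}.(a) (boundedness of $v_0,\triangledown v_0$, and H\"older continuity of $\triangle v_0$ and $\bar v_0$ of orders $\gamma_1,\gamma_2$) is exactly what is needed to show $|X^0(t,x)-X^0(t,\bar x)|\le C|x-\bar x|^{\gamma_1\wedge\gamma_2}$; since $\rho<\min(\gamma_1,\gamma_2)$ this term is controlled. Next, for the stochastic integral I would apply Burkholder's inequality to reduce the $L^p$-norm to the $\hac$-norm of the integrand's spatial increment; using the key identity \eqref{fundamental}, the resulting quantity is an integral against $f$ of the difference of the two shifted fundamental solutions $G(t-s,x-\cdot)$ and $G(t-s,\bar x-\cdot)$. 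This is where Hypothesis \ref{HypH1}.(b), specifically \eqref{H1-c} and \eqref{H1-d}, enters: these mean-H\"older bounds on $f$ (of orders $\gamma$ and $\gamma'$) are precisely the estimates of \cite{h-hu-nu} that yield increments of order $|x-\bar x|^{\gamma}$ and $|x-\bar x|^{\gamma'/2}$ after integrating against the measure $G(t-s,\cdot)$. The translation-invariance reformulation in Remark \ref{rs3.1}, equations \eqref{s3.est}--\eqref{s3.est1}, is what lets me pass the spatial shift onto $G$ and keep the $\sup_z$ under control, compensating for the loss of translation invariance of the moments caused by the non-zero initial condition and the multiplicative structure.

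For the two Hilbert-space pairing terms, the one involving $h\in\hac_T$ is estimated by Cauchy--Schwarz in $\hact$, again reducing to the $\hac$-increment of $G(t-\cdot,x-\ast)D(X_n)$ and hence to the same $f$-integral bounds from Hypothesis \ref{HypH1}.(b); the one involving $w^n$ requires the localization \eqref{localization}, since on $L_n(t)$ the bound \eqref{s3.101}, $\|w^n(t,\ast)1_{L_n(t)}\|_\hac\le Cn^{3/2}2^{n/2}$, controls the noise growth uniformly in $n$, which is exactly what makes the estimate hold uniformly over $n\ge 0$ as required by \eqref{s3.17}. Throughout, the Lipschitz continuity of $A,B,D,b$ (Hypothesis \ref{HypB}) converts increments of the coefficients into increments of $X_n$, and boundedness of the relevant moments (which I would take from the existence result Theorem \ref{ts5.1}, or prove in tandem) closes the estimates. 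I expect the main obstacle to be the $w^n$-term: one must show that the deterministic $\hac$-increment estimates combine with the localized bound on $\|w^n\|_\hac$ to give a clean $|x-\bar x|^\rho$ bound that does not degrade as $n\to\infty$, and this is the only place where the interplay between the regularized noise, the localization, and the mean-H\"older conditions on $f$ must be handled simultaneously rather than term-by-term.
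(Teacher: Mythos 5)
There is a genuine gap, and it sits exactly where you predicted it would: the pairing term against $w^n$. Your treatment of the other four pieces ($X^0$ via Hypothesis~\ref{HypH1}.(a), the stochastic integral via Burkholder, \eqref{fundamental}, Remark~\ref{rs3.1} and Hypothesis~\ref{HypH1}.(b), the $h$-pairing via Cauchy--Schwarz, and the drift) coincides with the paper's estimates of $R_n^0$, $R_n^1$, $R_n^4$, $R_n^5$ in Lemma~\ref{lss3.1.2}. But for the $w^n$ term your proposed strategy --- Cauchy--Schwarz in $\mathcal{H}_t$ combined with the localized bound \eqref{s3.101} --- cannot work: \eqref{s3.101} gives $\Vert w^n 1_{L_n(t)}\Vert_{\mathcal{H}}\le Cn^{3/2}2^{n/2}$, a factor that \emph{explodes} as $n\to\infty$, so even a perfect $|x-\bar x|^{\rho}$ estimate for the $\hac$-increment of $G(t-\cdot,x-\ast)B(X_n)$ yields a bound of the form $Cn^{3p/2}2^{np/2}|x-\bar x|^{\rho p}$, which is useless for the uniform-in-$n$ claim \eqref{s3.17}. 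You acknowledge this as ``the main obstacle'' but supply no mechanism to remove the exploding factor; this is not a loose end but the core difficulty of the proposition (the paper's introduction stresses that correction terms explode and must be avoided).

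The paper's resolution, which is the missing idea, is to introduce the time-lagged process $X_n^-$ of \eqref{s3.8.2} and split $B(X_n)=B(X_n^-)+[B(X_n)-B(X_n^-)]$. For the $B(X_n^-)$ part (term $R_n^2$), the $\mathcal{F}_{s_n}$-measurability of $X_n^-(s,\ast)$ and the definition of $w^n$ allow the pathwise pairing to be rewritten as a genuine stochastic integral with respect to $M$ of $(\pi_n\circ\tau_n)[\cdots]$, where $\pi_n\circ\tau_n$ is bounded on $\mathcal{H}_T$ uniformly in $n$; Burkholder plus Lemma~\ref{lss3.1.1} then give the same clean bound as for the $A$-term, with no exploding factor. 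For the discrepancy part (term $R_n^3$), Cauchy--Schwarz with the factor $n^{3p/2}2^{np/2}$ \emph{is} used, but it is compensated by the smallness estimate \eqref{s4.19}, $\Vert (X_n-X_n^-)1_{L_n}\Vert_p\le Cn^{3/2}2^{-n(\nu+1)/2}$, after transferring the increments of $G$ onto increments of $\hat B(X_n)$ as in the proof of Lemma~\ref{lss3.1.1}; this produces the vanishing sequence $f_n=n^{3p}2^{-np\nu/2}$ in \eqref{s3.29}. A structural consequence you would also need to absorb: because $X_n^-$ now enters the estimates, the Gronwall argument of Proposition~\ref{pss3.1.2} must be run jointly on the increments of $X_n$ and $X_n^-$ (the quantity $\varphi_{n,p}=\varphi_{n,p}^0+\varphi_{n,p}^-$), taken as suprema over all pairs $z_1-z_2=x-\bar x$ in the sense of Remark~\ref{rs3.1}; your single-process closing argument does not accommodate this.
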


In the next lemma, we give an abstract result that will be used
throughout the proofs. We start by introducing some notation.

For a function $f: \R^3 \to\R$, we set
\begin{eqnarray*}
Df(u,x) &=&f(u+x)-f(u),
\\
\bar{D}^2f(u,x,y) &=&f(u+x+y)-f(u+x)-f(u+y)+f(u),
\\
D^2f(u,x) &=&\bar{D}^2f(u-x,x,x) =f(u-x)-2f(u)+f(u+x).
\end{eqnarray*}

\begin{lemma}
\label{lss3.1.1}
Consider a sequence of predictable stochastic processes $\{Z_n(t,x),
(t,x)\in[0,T]\times\R^3\}$, $n\in\mathbb{N}$, such that, for any
$p\in[2,\infty[$,
%
\begin{equation}
\label{s3.21} \sup_n \sup_{(t,x)\in[0,T]\times\R^3} \E
\bigl(\bigl\llvert Z_n(t,x)\bigr\rrvert ^p \bigr)< C,
\end{equation}
for some finite constant $C$.
For any $t\in[0,T]$, $x,\bar x\in\R^3$, we define
\[
I_n(t,x,\bar{x}) := \int_0^t
\mathrm{d}s \bigl \Vert Z_n(s,\ast) \bigl[G(t-s,x-\ast )-G(t-s,\bar{x}-\ast)
\bigr]\bigr  \Vert^2_{\mathcal{H}}.
\]
Then, for any $p\in[2,\infty[$,
%
\begin{eqnarray}
\label{s3.220} \E \bigl( \bigl |I_n(t,x,\bar{x}) \bigr |^{{\trup{p}{2}}} \bigr)
 &\le& C \biggl\{|x-\bar{x}|^{\gamma p}+|x-\bar{x}|^{\gamma' p/2}
\nonumber
\\
&&\qquad+\int_0^t\mathrm{d}s \Bigl[\sup_{z_1-z_2=x-\bar{x}} \E \bigl( \bigl |Z_n(s,z_1)-Z_n(s,
z_2)\bigr  |^p \bigr) \Bigr]\biggl\}
\end{eqnarray}
where $\gamma \in\,]0,1] $ and $\gamma' \in\,]0,2]$.
\end{lemma}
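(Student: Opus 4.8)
The plan is to reduce everything to the spatial form of the $\mathcal H$-inner product together with the surface-measure representation of the three-dimensional wave kernel. First I would use \eqref{fundamental} to write, for fixed $s$,
\[
\bigl\|Z_n(s,\ast)[G(t-s,x-\ast)-G(t-s,\bar x-\ast)]\bigr\|_{\mathcal H}^2=\int_{\R^3}\int_{\R^3}\Psi(dy)\Psi(dy')\,f(y-y'),
\]
with $\Psi(dy)=Z_n(s,y)[G(t-s,x-dy)-G(t-s,\bar x-dy)]$, and then pass to the sphere: since $G(t-s,x-dy)$ is the normalised surface measure on $\{|x-y|=t-s\}$, the substitution $y=x-(t-s)\xi$, $\xi\in S^2$, turns each factor into $\tfrac{t-s}{4\pi}\int_{S^2}(\cdots)\,\sigma(d\xi)$. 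Expanding the square then produces four products of the form $Z_n(s,x-(t-s)\xi)Z_n(s,x-(t-s)\eta)$, $Z_n(s,x-(t-s)\xi)Z_n(s,\bar x-(t-s)\eta)$, and so on, each multiplied by $f$ evaluated at $(t-s)(\eta-\xi)$, at $(x-\bar x)+(t-s)(\eta-\xi)$, or at $-(x-\bar x)+(t-s)(\eta-\xi)$.

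Next comes the decisive reorganisation, which is exactly where the translation-invariance of the moments recorded in Remark~\ref{rs3.1} enters. Writing $a=x-\bar x$ and adding and subtracting the common reference value $Z_n(s,\bar x-(t-s)\xi)$ in each slot, I would split the four-term integrand into three pieces: (i) a pure second-difference piece $Z_n(s,\bar x-(t-s)\xi)Z_n(s,\bar x-(t-s)\eta)\,[f(w+a)-2f(w)+f(w-a)]$ with $w=(t-s)(\eta-\xi)$; (ii) a pure increment piece $[Z_n(s,x-(t-s)\xi)-Z_n(s,\bar x-(t-s)\xi)][Z_n(s,x-(t-s)\eta)-Z_n(s,\bar x-(t-s)\eta)]\,f(w)$; and (iii) two mixed pieces pairing one increment with one first difference $f(w)-f(\pm a+w)$.

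Then I would estimate the $L^{p/2}(\Omega)$-norm of the time integral of each piece by Minkowski's integral inequality and Hölder/Cauchy–Schwarz in $\Omega$, using throughout the uniform moment bound \eqref{s3.21}. For the increment piece (ii), once reassembled it equals $\int_0^t\|Y_n(s,\ast)G(t-s,x-\ast)\|_{\mathcal H}^2\,ds$ with $Y_n(s,y)=Z_n(s,y)-Z_n(s,y-a)$, and the standard $L^p$ estimate for such stochastic-convolution functionals — which rests on $\int_0^T\|G(r,\cdot)\|_{\mathcal H}^2\,dr<\infty$, itself equivalent to \eqref{cond-f} — bounds it by $C\int_0^t\sup_{z_1-z_2=x-\bar x}\E(|Z_n(s,z_1)-Z_n(s,z_2)|^p)\,ds$, the third term on the right of \eqref{s3.220}. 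For the second-difference piece (i), the bounded moments reduce matters to $\int_0^t(t-s)^2\int_{S^2}\int_{S^2}|f(w+a)-2f(w)+f(w-a)|\,\sigma(d\xi)\sigma(d\eta)\,ds$; here the change of variables $z=(t-s)(\eta-\xi)$ converts the spherical/time integral into $\le C\int_{|z|\le 2T}|z|^{-1}|f(z+a)-2f(z)+f(z-a)|\,dz$, so that \eqref{H1-d} yields $C|x-\bar x|^{\gamma'p/2}$. The mixed pieces (iii) are controlled by the same change of variables together with the first-difference estimate \eqref{H1-c}; combined with Cauchy–Schwarz and \eqref{s3.21}, they are dominated by the two preceding contributions and the remaining term $C|x-\bar x|^{\gamma p}$.

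The main obstacle is steps (i)–(iii): arranging the four cross-terms so that their leading part is a genuine symmetric second difference of $f$ — which is what makes the sharp exponent $\gamma'/2$ available through \eqref{H1-d} rather than a sum of uncontrolled first differences — and this cancellation is precisely the content of the translation identity in Remark~\ref{rs3.1}. The second delicate point is the change-of-variables lemma converting $\int_0^t r^2\int_{S^2}\int_{S^2}g(r(\eta-\xi))\,\sigma(d\xi)\sigma(d\eta)\,dr$ into $\int_{|z|\le 2T}|z|^{-1}g(z)\,dz$, which is what aligns the spherical integrals with the hypotheses \eqref{H1-c} and \eqref{H1-d}; once this is in hand, the three estimates are routine.
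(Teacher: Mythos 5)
Your proposal is correct and follows essentially the same route as the paper: your pieces (ii), (iii) and (i) are exactly the paper's $J_1^t$, $(J_2^t,J_3^t)$ and $J_4^t$ decomposition (taken from \cite{dss}, pp.~19--20), estimated via the moment bound \eqref{s3.21}, the translation property of Remark~\ref{rs3.1}, and the conditions \eqref{H1-c}, \eqref{H1-d}. The ``change-of-variables lemma'' you flag as the second delicate point is precisely Lemma~7.1 of \cite{h-hu-nu}, which the paper invokes for the same purpose, so no new ingredient is needed.
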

\begin{proof}
Set $w=x-\bar x$.
First, we notice that $I_n(t,x,\bar x)$ is the second order moment of
the stochastic integral
\[
\int_0^t \int_{\R^3}
Z_n(s,y)\bigl[G(t-s,x-y)-G(t-s,\bar x-y)\bigr] M(\mathrm{d}s,
\mathrm{d}y).
\]

We write $I_n(t,x,\bar x)$ using \eqref{fundamental}. This yields
\begin{eqnarray*}
I_n(t,x,\bar x)&=& C\int_0^t
\mathrm{d}s \int_{\R^3} \int_{\R^3}
Z_n(s,u)Z_n(s,v) \bigl[G(t-s,x-\mathrm{d}u)- G(t-s,\bar
x-\mathrm{d}u)\bigr]
\\
&&{} \times\bigl[G(t-s,x-\mathrm{d}v)-G(t-s,\bar x-\mathrm{d}v)
\bigr]f(u-v).
\end{eqnarray*}
Then, as in \cite{dss}, pages 19--20, we see that, by decomposing this
expression into the sum of four integrals, by applying a change of
variables and rearranging terms, we have
\[
I_n(t,x,\bar x)= C\sum_{i=1}^4
J_i^t(x,\bar{x}),
\]
where, for $i=1,\ldots,4$,
\[
J_i^t(x,\bar{x})=\int_0^t
\mathrm{d}s \int_{\R^3}\int_{\R^3} G(s,
\mathrm{d}u)G(s,\mathrm{d}v) h_i(x,\bar{x};t,s,u,v)
\]
with
\begin{eqnarray*}
h_1(x,\bar{x} ;t,s,u,v) &=&f(\bar{x}-x+v-u) \bigl[Z_n(t-s,x-u)-Z_n(t-s,
\bar {x}-u)\bigr]
\\
&&{} \times\bigl[Z_n(t-s,x-v)-Z_n(t-s,\bar{x}-v)\bigr],
\\
h_2(x,\bar{x} ;t,s,u,v) &=&Df(v-u, x-\bar{x}) Z_n(t-s,x-u)
\\
&&{} \times\bigl[Z_n(t-s,x-v)-Z_n(t-s,\bar{x}-v)\bigr],
\\
h_3(x,\bar{x} ;t,s,u,v) &=&Df(v-u,\bar{x}-x) Z_n(t-s,
\bar{x}-v)
\\
&&{} \times\bigl[Z_n(t-s,x-u)-Z_n(t-s,\bar{x}-u)\bigr],
\\
h_4(x,\bar{x} ;t,s,u,v) &=&-D^2f(v-u,x-\bar{x})
Z_n(t-s,x-u)Z_n(t-s,x-v).
\end{eqnarray*}
Fix $p\in[2,\infty[$. It holds that
%
\begin{equation}
\label{s3.23} \E\bigl(\bigl |I_n(t,x,\bar{x})\bigr |^{{\trup{p}{2}}}\bigr) \le C
\sum_{i=1}^4 \E \bigl(\bigl |J_i^t(x,
\bar{x}) \bigr |^{{\trup{p}{2}}}\bigr).
\end{equation}
%
The next purpose is to obtain estimates for each term on the right
hand-side of \eqref{s3.23}. Let
\[
\mu_1(x,\bar{x})= \sup_{s\in[0,T]}\int
_{\R^3}\int_{\R^3} G(s,\mathrm{d}u)G(s,
\mathrm{d}v)f(\bar{x}-x+v-u).
\]
By Lemma 7.1 in \cite{h-hu-nu} and \eqref{cond-f} we have that  

$$\sup_{x,\bar{x}}\mu
_1(x,\bar{x})<\infty$$

Hence using firstly H\"older's inequality and then Cauchy--Schwarz's
inequality along with remark \ref{rs3.1}, we see that
%
\begin{eqnarray}
\label{s3.24} &&\E \bigl(\bigl |J_1^t(x,
\bar{x})\bigr |^{{\trup{p}{2}}} \bigr)
\nonumber
\\
&&\quad \le \biggl(\int_0^t \mathrm{d}s\int
_{\R^3}\int_{\R^3} G(s,\mathrm{d}u)G(s,
\mathrm{d}v) f(\bar{x}-x+v-u) \biggr)^{(\trup{p}{2})-1}
\nonumber
\\
&&\qquad{} \times\int_0^t \mathrm{d}s\int
_{\R^3}\int_{\R^3} G(s,\mathrm{d}u)G(s,
\mathrm{d}v)f(\bar {x}-x+v-u)
\nonumber
\\
&&\qquad\qquad{} \times \E \bigl( \bigl | \bigl[Z_n(t-s,x-u)-Z_n(t-s,
\bar{x}-u)\bigr]
\nonumber
\\[-8pt]
\nonumber
\\[-8pt]
&&\qquad\qquad\qquad{}\times\bigl[Z_n(t-s,x-v)-Z_n(t-s,\bar{x}-v)
\bigr] \bigr |^{{\trup{p}{2}}} \bigr)\nonumber
\nonumber
\\
&&\quad \le C \sup_{x,\bar x} \mu_1(x,\bar
x)^{\trup{p}{2}} \int_0^t \mathrm{d}s \sup
_{z_1-z_2=w} \E \bigl( \bigl | Z_n(t-s,z_1)-Z_n(t-s,
z_2) \bigr |^p \bigr)
\nonumber
\\
&&\quad \le C \int_0^t \mathrm{d}s \sup
_{z_1-z_2=w} \E \bigl( \bigl | Z_n(s,z_1)-Z_n(s,z_2) \bigr |^p \bigr).
\end{eqnarray}
Set
%
\begin{equation}
\label{s3.241} \mu_2(x,\bar{x})= \sup_{s\in[0,T]}\int
_{\R^3}\int_{\R^3} G(s,\mathrm{d}u)G(s,
\mathrm{d}v)\bigl |Df(v-u,x-\bar{x})\bigr |.
\end{equation}
The following property holds: there exists a positive finite constant
$C$ such that
\[
\mu_2(x,\bar{x})\le C|x-\bar{x}|^{\gamma}, \qquad
\gamma \in\,\bigl ]0,1].
\]
Indeed, this follows from Lemma 7.1 in \cite{h-hu-nu} and  {\bf (b)} in {\bf Hypothesis 1}:
\begin{eqnarray*}
\int_{\R^3}\int_{\R^3} G(s,\mathrm{d}u)G(s,
\mathrm{d}v)\bigl |Df(v-u,x-\bar{x})\bigr |&\le& C\int_{|z|\le 2T} \frac{|f(z+w)-f(z)|}{|z|}\\
& \le& C |w|^\gamma= C |x-\bar x|^\gamma
\end{eqnarray*}

Now, using the inequality $ab\le \tfrac{a^2+b^2}{2}$ we write
\begin{eqnarray}
\label{s3.25.0} \E\bigl(\bigl |J_2^t(x,\bar{x})\bigr |^{{\trup{p}{2}}}
\bigr) &\le& C \E \biggl(\int_0^t \mathrm{d}s\int
_{\R^3}\int_{\R^3} G(s,\mathrm{d}u)G(s,
\mathrm{d}v)\bigl |Df(v-u,x-\bar{x})\bigr | 
\nonumber
\\
&&{}\quad \times \biggl| Z_n(t-s,x-u)\bigl[Z_n(t-s,x-v)-Z_n(t-s,
\bar {x}-v)\bigr] \biggr| \biggr)^{\trup{p}{2}}
\nonumber
\\
&\le& C \E \biggl(\int_0^t \mathrm{d}s\int
_{\R^3}\int_{\R^3} G(s,\mathrm{d}u)G(s,
\mathrm{d}v)\bigl |Df(v-u,x-\bar{x})\bigr | 
\nonumber
\\
&&{}\quad \times |w|^\gamma \bigl|Z_n(t-s,x-u)\bigr|^2 \bigr)\biggr)^{\trup{p}{2}}
\nonumber
\\
&&+ C \E \biggl(\int_0^t \mathrm{d}s\int
_{\R^3}\int_{\R^3} G(s,\mathrm{d}u)G(s,
\mathrm{d}v)\bigl |Df(v-u,x-\bar{x})\bigr | 
\nonumber
\\
&&{}\quad \times |w|^{-\gamma} \bigl|Z_n(t-s,x-v)-Z_n(t-s,
\bar {x}-v) \bigr |^2 \bigr)\biggr)^{\trup{p}{2}}
\nonumber
\\
&=:& C\biggl( \E\Bigl[\bigl |J_{2,1}^t(x,\bar{x})\bigr |^{{\trup{p}{2}}}
\Bigr]+\E\bigl[\Bigl |J_{2,2}^t(x,\bar{x})\bigr |^{{\trup{p}{2}}}
\Bigr]\biggr).
\nonumber
\end{eqnarray}

Using H\"older's and  \eqref
{s3.21} along with the bound for  $\mu_2$, we have
%
\begin{eqnarray}
\label{s3.26.1} \E\bigl(\bigl |J_{2,1}^t(x,\bar{x})\bigr |^{{\trup{p}{2}}}
\bigr) &\le& C |w|^{\gamma p/2} \biggl(\int_0^t \mathrm{d}s\int
_{\R^3}\int_{\R^3} G(s,\mathrm{d}u)G(s,
\mathrm{d}v)\bigl |Df(v-u,x-\bar{x})\bigr | \biggr)^{\trup{p}{2}}
\nonumber
\\
&&{}\qquad \times\sup_n\sup_{(t,x)\in[0,T]\times \R^3}\E \bigl( \bigl | Z_n(t,x)\bigr |^p \bigr)\qquad\quad
\nonumber
\\
&\le& C |w|^{\gamma p/2}\biggl( \sup_{s\in [0,T]} \int
_{\R^3}\int_{\R^3} G(s,\mathrm{d}u)G(s,\mathrm{d}v)\bigl |Df(v-u,x-\bar{x})\bigr | \biggr)^{\trup{p}{2}}
\nonumber
\\
\nonumber
&\le& C |x-\bar{x}|^{\gamma p/2} |x-\bar{x}|^{\gamma p/2} = C |x-\bar{x}|^{\gamma p}
\end{eqnarray}
with $\gamma \in\,]0,1]$.

Using H\"older's inequality, Remark \ref{rs3.1} and  the bound for $\mu_2$, we have
%
\begin{eqnarray}
\label{s3.26.2} \E\bigl(\bigl |J_{2,2}^t(x,\bar{x})\bigr |^{{\trup{p}{2}}}
\bigr) &\le& C |w|^{-\gamma p/2} \biggl(\int_0^t \mathrm{d}s\int
_{\R^3}\int_{\R^3} G(s,\mathrm{d}u)G(s,
\mathrm{d}v)\bigl |Df(v-u,x-\bar{x})\bigr | \biggr)^{\tfrac{p}{2}-1}
\nonumber
\\
&&{}\qquad \times\int_0^t \mathrm{d}s \sup_{z_1-z_2=w}\E \bigl( \bigl | Z_n(t,z_1)-Z_n(t,z_2)\bigr |^p \bigr)\qquad\quad
\nonumber
\\
&&{}\qquad\qquad \times\sup_{s\in [0,T]} \int
_{\R^3}\int_{\R^3} G(s,\mathrm{d}u)G(s,
\mathrm{d}v)\bigl |Df(v-u,x-\bar{x})\bigr |\qquad\quad
\nonumber
\\
&\le& C |w|^{-\gamma p/2}\biggl( \sup_{s\in [0,T]} \int
_{\R^3}\int_{\R^3} G(s,\mathrm{d}u)G(s,\mathrm{d}v)\bigl |Df(v-u,x-\bar{x})\bigr | \biggr)^{\trup{p}{2}}
\nonumber
\\
&&{}\qquad \times\int_0^t \mathrm{d}s \sup_{z_1-z_2=w}\E \bigl( \bigl | Z_n(t,z_1)-Z_n(t,z_2)\bigr |^p \bigr)\qquad\quad
\nonumber
\\
\nonumber
&\le& C |w|^{-\gamma p/2} |w|^{\gamma p/2}
\int_0^t \mathrm{d}s \sup_{z_1-z_2=w}\E \bigl( \bigl | Z_n(t,z_1)-Z_n(t,z_2)\bigr |^p \bigr)\qquad\quad
\nonumber
\\
\nonumber
&\le& C \int_0^t \mathrm{d}s \sup_{z_1-z_2=w}\E \bigl( \bigl | Z_n(t,z_1)-Z_n(t,z_2)\bigr |^p \bigr)\qquad\quad
\end{eqnarray}

Thus, 
\begin{eqnarray}
\label{s3.25} \E\bigl(\bigl |J_2^t(x,\bar{x})\bigr |^{{\trup{p}{2}}}
\bigr) &\le& C \Bigl( |x-\bar{x}|^{\gamma p} + 
\int_0^t \mathrm{d}s \sup_{z_1-z_2=w}\E \bigl( \bigl | Z_n(t,z_1)-Z_n(t,z_2)\bigr |^p \bigr) \Bigr),
\end{eqnarray}

with $\gamma \in\,]0,1]$.

Similarly,
\begin{eqnarray}
\label{s3.26} \E\bigl(\bigl |J_3^t(x,\bar{x})\bigr |^{{\trup{p}{2}}}
\bigr) &\le& C \Bigl( |x-\bar{x}|^{\gamma p} + 
\int_0^t \mathrm{d}s \sup_{z_1-z_2=w}\E \bigl( \bigl | Z_n(t,z_1)-Z_n(t,z_2)\bigr |^p \bigr) \Bigr),
\end{eqnarray}

with $\gamma \in\,]0,1]$.

H\"older's and Cauchy--Schwarz's inequalities, along with \eqref{s3.21} and \eqref{H1-d}, imply
%
\begin{eqnarray}
\label{s3.27} \E\bigl(\bigl |J_4^t(x,\bar{x})\bigr |^{{\trup{p}{2}}}
\bigr) &\le& C \biggl(\int_0^t \mathrm{d}s\int
_{\R^3}\int_{\R^3} G(s,\mathrm{d}u)G(s,
\mathrm{d}v) \bigl |D^2f(v-u,x-\bar{x})\bigr | \biggr)^{(\trup{p}{2})-1}\quad
\nonumber
\\
&&{} \times\int_0^t \mathrm{d}s\int
_{\R^3}\int_{\R^3} G(s,\mathrm{d}u)G(s,
\mathrm{d}v)\bigl |D^2f(v-u,x-\bar{x})\bigr |
\nonumber
\\
&&{} \qquad\quad\times \E \bigl( \bigl |Z_n(t-s,x-u) Z_n(t-s,\bar{x}-v)\bigr |^{p/2} \bigr)
\nonumber
\\
&\le& C \biggl(\int_0^t \mathrm{d}s\int
_{\R^3}\int_{\R^3} G(s,\mathrm{d}u)G(s,
\mathrm{d}v) \bigl |D^2f(v-u,x-\bar{x})\bigr | \biggr)^{\trup{p}{2}}\quad
\nonumber
\\
&&{} \times\sup_n\sup_{(t,x)\in [0,T]\times \R^3} \E \bigl( \bigl | Z_n(t,x)\bigr |^p \bigr)
\nonumber
\\
&\le& C \biggl(\int_0^t \mathrm{d}s\int
_{|z|\le 2T} \frac{|f(z+w)-2f(z)-f(z-w)|}{|z|} ds \biggr)^{\trup{p}{2}}\quad
\nonumber
\\
&\le& C |x-\bar{x}|^{\gamma' p /2}.
\end{eqnarray}

From \eqref{s3.23}, \eqref{s3.24}, \eqref{s3.25}, \eqref{s3.26} and
\eqref{s3.27}, we obtain \eqref{s3.220}.
\end{proof}
%

For any $t\in[t_0,T]$, $x,\bar x\in K$, $p\in[1,\infty[$ set  $w=x-\bar x$ and
\begin{eqnarray*}
\varphi_{n,p}^0(t,x,\bar{x}) &=& \sup_{z_1-z_2=w} \E \bigl( \bigl |
X_n(t,z_1)-X_n(t,z_2) \bigr |^p
1_{L_n(t)} \bigr),
\\
\varphi_{n,p}^-(t,x,\bar{x}) &=& \sup_{z_1-z_2=w} \E \bigl( \bigl | X_n^-(t,z_1)-X_n^-(t,z_2
) \bigr |^p 1_{L_n(t)} \bigr),
\\
\varphi_{n,p}(t,x,\bar{x}) &=& \varphi_n^0(t,x,
\bar{x})+\varphi _n^-(t,x,\bar{x}).
\end{eqnarray*}

Proposition~\ref{pss3.1.1} is a consequence of the following assertion.
%
\begin{proposition}\label{pss3.1.2}
The hypotheses are the same as in Proposition~\ref{pss3.1.1}. Fix
$t\in[t_0,T]$, $x,\bar x\in K$. Then, for any $p\in[1,\infty[$,
$\rho\in\, ]0, \min(\gamma,\gamma_1,\gamma_2,\tfrac{\gamma'}{2}) [$,
%
\begin{equation}
\label{s3.28} \sup_{n\ge0}\varphi_{n,p}(t,x,\bar{x})
\le C|x-\bar x|^{\rho p}.
\end{equation}
\end{proposition}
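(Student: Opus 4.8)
The plan is to estimate the $L^p(\Omega)$-increments in space of $X_n$ and of $X_n^-$ simultaneously, derive a single closed integral inequality for the coupled quantity $\varphi_{n,p}(t,x,\bar x)$, and conclude by Gronwall's lemma. Since all the auxiliary estimates (in particular Lemma~\ref{lss3.1.1}) are stated for $p\in[2,\infty[$, I would first reduce the range $p\in[1,2)$ to $p=2$ by Jensen's inequality, then fix $p\ge 2$ and write $w=x-\bar x$. The first move is to expand the increment $X_n(t,z_1)-X_n(t,z_2)$ using equation \eqref{s3.6}, splitting it into five contributions: the increment of the initial datum $X^0$, the stochastic integral against $M$ with coefficient $A$, the pathwise term $\langle\cdot,w^n\rangle_{\mathcal H_t}$ with coefficient $B$, the pathwise term $\langle\cdot,h\rangle_{\mathcal H_t}$ with coefficient $D$, and the drift term with $b$.

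The initial-condition increment is treated pointwise: by \textbf{Hypothesis 1} (a) the map $X^0(t,\cdot)$ is H\"older continuous of order $\gamma_1\wedge\gamma_2$, contributing $C|w|^{(\gamma_1\wedge\gamma_2)p}$. For the stochastic integral I would apply the Burkholder--Davis--Gundy inequality to pass from the $L^p$-moment to the second-moment functional $I_n(t,z_1,z_2)$ of Lemma~\ref{lss3.1.1} with $Z_n=A(X_n)$; hypothesis \eqref{s3.21} holds since $A$ is Lipschitz and the $X_n$ have moments bounded uniformly in $n$, and Lipschitz continuity of $A$ converts the last term of \eqref{s3.220} into $\int_0^t\varphi_{n,p}(s,x,\bar x)\,ds$, giving a bound $C(|w|^{\gamma p}+|w|^{\gamma' p/2}+\int_0^t\varphi_{n,p}(s)\,ds)$. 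Throughout, the localization indicator is inserted at earlier times using that $L_n(t)\subset L_n(s)$ for $s\le t$, so that $1_{L_n(t)}=1_{L_n(t)}1_{L_n(s)}$; this is precisely what matches the localization built into the definition of $\varphi_{n,p}$ and keeps the pathwise estimates uniform in $n$.

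The term driven by $h$ is controlled by Cauchy--Schwarz in $\mathcal H_t$: the increment factor is again dominated by $I_n$ (Lemma~\ref{lss3.1.1} with $Z_n=D(X_n)$) while $\|h\|_{\mathcal H_T}$ is a fixed finite constant, producing the same type of bound. The drift term is handled by the usual convolution estimate for $G$ together with the Lipschitz property of $b$ and Remark~\ref{rs3.1}, yielding another $\int_0^t\varphi_{n,p}(s)\,ds$ contribution. The genuinely delicate term is the one driven by $w^n$, because $\|w^n\|_{\mathcal H_T}$ grows like $2^{n/2}$; this is exactly why $X_n^-$ is introduced and why $\varphi^0$ is coupled with $\varphi^-$. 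Here I would split the time integral of the $w^n$-term over $[0,t]$ into the part over $[0,t_n]$, which reproduces the corresponding term of $X_n^-$, and the part over $[t_n,t]$. Since $t-t_n\le 2\cdot 2^{-n}T$, the latter interval meets at most two dyadic blocks $\Delta_i$, so the localized bound \eqref{s3.14} applies and replaces the forbidden $2^{n/2}$ factor by a harmless power of $n$, which is then absorbed by the decay of the short-interval increment factor. Carrying out the identical scheme for the increments of $X_n^-$ yields a coupled integral inequality of the form $\varphi_{n,p}(t)\le C|w|^{\rho p}+C\int_0^t\varphi_{n,p}(s)\,ds$, uniformly in $n$, where the exponents $\gamma p$, $\gamma'p/2$ and $(\gamma_1\wedge\gamma_2)p$ all dominate $\rho p$ for bounded $|w|$ and so collapse into $C|w|^{\rho p}$; Gronwall's lemma then gives \eqref{s3.28}.

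I expect the $w^n$-term to be the main obstacle. The crux is verifying that the increment factor on the short interval $[t_n,t]$ decays in $n$ fast enough to neutralize the polynomial-in-$n$ factor coming from \eqref{s3.14}, and that the bookkeeping tying $X_n$ to $X_n^-$ genuinely closes the Gronwall argument uniformly, rather than leaking an $n$-dependent constant through the pathwise terms.
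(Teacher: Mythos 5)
Your treatment of every term except the one driven by $w^n$ matches the paper's (the paper proves this proposition by combining Lemma~\ref{lss3.1.2} with Gronwall's lemma, and Lemma~\ref{lss3.1.2} is proved by essentially the decomposition you describe, with Lemma~\ref{lss3.1.1} doing the work for the $M$-, $h$- and drift terms). But your handling of the $w^n$-term has a genuine gap, and it is exactly the step you flag as the crux. Splitting the time integral at $t_n$ only relocates the difficulty: the piece over $[0,t_n]$ is, as you say, identically the $w^n$-term of $X_n^-$, so your estimate for $\varphi^0_{n,p}(t,x,\bar x)$ contains $\varphi^-_{n,p}(t,x,\bar x)$ evaluated at the \emph{same} time $t$ (with a constant $\ge 1$ from the triangle inequality), and you still owe a bound for a pathwise integral of $B(X_n)1_{[0,t_n]}$ against $w^n$ over an interval whose length is of order $t$. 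There \eqref{s3.14} is of no use, and Cauchy--Schwarz with \eqref{s3.101} brings back the forbidden factor $n^{3/2}2^{n/2}$. Your proposed closure, ``carrying out the identical scheme for the increments of $X_n^-$'', cannot work: the $w^n$-term of $X_n^-$ is already cut off at $t_n$, so the same split is vacuous (or else triggers an infinite regress through further and further delays, with multiplicative constants piling up), and Gronwall's lemma cannot absorb a term $C\varphi^-_{n,p}(t,\cdot)$ sitting at the current time rather than inside $\int_0^t \mathrm{d}s$. Your analysis of the short piece $[t_n,t]$ via \eqref{s3.14} is sound---it mirrors the proof of Lemma~\ref{ls4.2}---but that was never the hard part.

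What the paper does instead is decompose the integrand, not the time domain: $B(X_n)=B(X_n^-)+\hat B(X_n)$, with $\hat B(X_n)$ as in \eqref{s3.351}, both pieces kept on the full interval $[0,t]$. For the piece with $B(X_n^-)$ (the term $R_n^2$), the $\mathcal{F}_{s_n}$-measurability of $X_n^-(s,\ast)$ together with the definition of $w^n$ allows one to rewrite the pathwise integral as a genuine stochastic integral with respect to $M$ of $(\pi_n\circ\tau_n)$ applied to the integrand; Burkholder's inequality, the uniform boundedness of $\pi_n\circ\tau_n$ on $\mathcal{H}_T$, and Lemma~\ref{lss3.1.1} then yield a bound with no exploding factor and with the Gronwall-compatible quantity $\int_0^t\mathrm{d}s\,\varphi^-_{n,p}(s,x,\bar x)$. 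For the piece with $\hat B(X_n)$ (the term $R_n^3$), one does pay the factor $n^{3p/2}2^{np/2}$ through Cauchy--Schwarz and \eqref{s3.101}, but this is compensated by the smallness of the discrepancy between $X_n$ and $X_n^-$: by \eqref{s4.19} of Lemma~\ref{ls4.3}, $\E\bigl(|\hat B(X_n)|^p 1_{L_n}\bigr)\le C n^{\trup{3p}{2}}2^{-np(\nu+1)/2}$, which produces the vanishing sequence $f_n$ in \eqref{s3.29}. These two ideas---the shift-and-project identity turning the $w^n$-integral of a delayed integrand into an It\^o integral, and the compensation of the exploding norm of $w^n$ by \eqref{s4.19}---are what make the argument close, and both are absent from your proposal. (The paper then deduces \eqref{s3.28} from \eqref{s3.29} by absorbing $f_n$ into $C|x-\bar x|^{\alpha p}$ and applying Gronwall's lemma.)
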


The proof of this proposition relies on the next lemma and 
Gronwall's lemma.

%
\begin{lemma}\label{lss3.1.2}
We assume the same hypotheses as in Proposition~\ref{pss3.1.1}. For
any $n\ge1$, $t\in[t_0,T]$, $x,\bar x\in K$, $p\in[2,\infty[$,
there exists a finite constant $C$ (not depending on $n$) such that
%
\begin{eqnarray}
\label{s3.29} \varphi_{n,p}(t,x,\bar{x}) &\le& C
\biggl[f_n+|x-\bar x|^{\gamma p} + |x-\bar x|^{\trup{\gamma' p}{2}}
+|x-\bar{x}|^{\gamma_1 p } \nonumber\\
&& \qquad+|x-\bar{x}|^{\gamma_2 p }+ \int_0^t \mathrm{d}s \bigl(\varphi_{n,p}(s,x,\bar{x})\bigr)
 \biggr],
\end{eqnarray}
where $(f_n, n\ge1)$ is a sequence of positive real numbers which converges to
zero as $n\to\infty$, $\gamma,\gamma_1,\gamma_2\in]0,1]$, $\gamma'\in\,]0, 2]$.
\end{lemma}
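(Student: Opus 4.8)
The plan is to bound the two spatial-increment functionals by decomposing $X_n(t,z_1)-X_n(t,z_2)$ and $X_n^-(t,z_1)-X_n^-(t,z_2)$ into the five contributions coming from the terms of \eqref{s3.6} and \eqref{s3.8.2}: the free term $X^0$, the $M$-stochastic integral with coefficient $A$, the pairing against $w^n$ with coefficient $B$, the pairing against the fixed $h$ with coefficient $D$, and the deterministic drift with coefficient $b$. Each contribution is estimated in $L^p(\Omega)$ while carrying the factor $1_{L_n(\cdot)}$. The localisation is handled by inserting the predictable weight $1_{L_n(s)}$ under every $s$-integral: since the sets $L_n(t)$ decrease in $t$, one has $1_{L_n(s)}=1$ for all $s\le t$ on the event $L_n(t)$, so the weighted integrals agree with the original ones on $L_n(t)$ and the factor $1_{L_n(t)}$ is dominated by the $p$-th moment of the weighted (hence globally controllable) object. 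Because the pairing against $w^n$ in the $X_n$-equation forces the coefficient to be read along the lagged process $X_n^-$, the estimate for $\varphi_n^0$ produces $\varphi_n^-$ and conversely; this is exactly why one controls the sum $\varphi_{n,p}=\varphi_n^0+\varphi_n^-$, so that the coupling closes into the single recursive inequality \eqref{s3.29}.

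For the free term I would invoke Hypothesis \ref{HypH1}(a): the representation of $X^0$ through the wave propagator together with the $\gamma_1$- and $\gamma_2$-Hölder continuity of $\triangle v_0$ and $\tilde v_0$ gives the deterministic bound $|X^0(t,z_1)-X^0(t,z_2)|\le C(|x-\bar x|^{\gamma_1}+|x-\bar x|^{\gamma_2})$, producing the $|x-\bar x|^{\gamma_1 p}$ and $|x-\bar x|^{\gamma_2 p}$ terms. For the $M$-driven integral, Burkholder--Davis--Gundy reduces the $p$-th moment of the (weighted) increment to $\E(|I_n(t,z_1,z_2)|^{p/2})$ with $Z_n=1_{L_n}A(X_n)$, and Lemma \ref{lss3.1.1}—whose hypothesis \eqref{s3.21} holds because $A$ has linear growth and $\sup_n\sup_{(t,x)}\E|X_n(t,x)|^p<\infty$—yields, through \eqref{s3.220} and the Lipschitz property of $A$, the terms $|x-\bar x|^{\gamma p}+|x-\bar x|^{\gamma' p/2}$ plus $\int_0^t\varphi_{n,p}(s,x,\bar x)\,\mathrm{d}s$. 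The $h$-term is treated identically after a Cauchy--Schwarz step in $\mathcal{H}_t$ (absorbing the fixed factor $\|h\|_{\mathcal{H}_T}$), again via Lemma \ref{lss3.1.1} with $Z_n=D(X_n)$ and the Lipschitz continuity of $D$. The drift is the mildest: writing the increment as $\int G(t-s,\mathrm{d}y)\,[b(X_n(s,z_1-y))-b(X_n(s,z_2-y))]$, the Lipschitz bound on $b$ together with $\int_0^T G(s,\R^3)\,\mathrm{d}s<\infty$ gives, by Jensen and monotonicity of $L_n$, a contribution of the form $\int_0^t\varphi_{n,p}(s,x,\bar x)\,\mathrm{d}s$.

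The term paired against $w^n$ is the main obstacle. A crude Cauchy--Schwarz estimate is useless because $\|w^n\|_{\mathcal{H}_t}$ blows up like $n^{1/2}2^{n/2}$ by \eqref{s3.4}. The key point is that for an integrand held constant one dyadic step behind the increments defining $w^n$, the pairing $\langle\,\cdot\,,w^n\rangle_{\mathcal{H}_t}$ behaves like a genuine stochastic integral and obeys an isometry-type $L^p$ bound by the $\mathcal{H}_t$-norm of the integrand, with no $n$-blow-up. I would therefore split $B(X_n)=B(X_n^-)+\bigl(B(X_n)-B(X_n^-)\bigr)$ inside the pairing: the $B(X_n^-)$ part, carrying the correct lag, is estimated exactly as the $M$-integral above (Lemma \ref{lss3.1.1} with $Z_n=B(X_n^-)$ and Lipschitz $B$, reproducing the $\varphi_n^-$ contribution), while the remainder $B(X_n)-B(X_n^-)$ is controlled on $L_n$ by the short-interval bounds \eqref{s3.101}--\eqref{s3.14}, since $X_n-X_n^-$ only accumulates over the lag interval $[t_n,t]$ of length $O(2^{-n})$. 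These $n$-dependent remainders—together with the analogous short-interval pieces of the remaining terms—are collected into the sequence $f_n$, which tends to $0$ as $n\to\infty$ because the lag interval shrinks. Adding the five estimates for $X_n$ to the five for $X_n^-$ and reassembling the time integrals via the monotonicity of $L_n$ produces precisely \eqref{s3.29}. The decisive difficulty is thus entirely in the $w^n$-term: justifying the stochastic-integral-type bound and the $X_n$--$X_n^-$ comparison is where the localisation and the auxiliary estimates of Section~\ref{s4} are indispensable, whereas the other four terms are routine Burkholder/Lemma \ref{lss3.1.1} computations.
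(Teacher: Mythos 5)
Your proposal is correct and follows essentially the same route as the paper's proof: the same decomposition into the free term $X^0$, the $A$-stochastic integral, the $w^n$-pairing split as $B(X_n^-)$ plus $B(X_n)-B(X_n^-)$, the $h$-pairing and the drift, with Lemma \ref{lss3.1.1} applied to the three coefficient terms $A(X_n)$, $B(X_n^-)$ (after rewriting the lagged pairing as a genuine stochastic integral) and $D(X_n)$, the discrepancy term absorbed into $f_n$ via the auxiliary estimates of Section \ref{s4}, and the coupled recursion for $\varphi^0_{n,p}+\varphi^-_{n,p}$ closing the argument exactly as in the paper. The only point you leave implicit is the quantitative reason why $f_n\to 0$: the Cauchy--Schwarz step for the remainder costs the exploding factor $n^{3p/2}2^{np/2}$ from \eqref{s3.101}, which is beaten by the bound $n^{3p/2}2^{-np(\nu+1)/2}$ of \eqref{s4.19} only because Hypothesis \ref{HypH2}(a) supplies $\nu>0$ (giving $f_n=n^{3p}2^{-np\nu/2}$), whereas ``the lag interval shrinks'' by itself would only yield a product of order $n^{3p}$, which does not vanish.
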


We postpone the proof of this lemma to the end of this section.

\begin{proof} (of Proposition~\ref{pss3.1.2}).\\
We will prove this proposition by contradiction. Suppose that the Lemma~\ref{lss3.1.2} does not imply
the Proposition ~\ref{pss3.1.2}. It means that \eqref{s3.29} does not imply \eqref{s3.28}. In such case, there exists some
 $m\in \IN$ and $x,\bar x\in K$ such that \eqref{s3.29} is satisfied but \eqref{s3.28} (without the $sup_n$) does not. 
 But, since $f_n$ is a bounded sequence, there exists $C_0>0$ ($C_0$ depending only on $x,\bar x$) such that 
 $$
f_{m}\le C_0 |x-\bar x|^{\alpha p}, 
 $$
where $\alpha=\min(\gamma,\gamma_1,\gamma_2,\tfrac{\gamma'}{2})$. This inequality and \eqref{s3.29} leave us

\begin{eqnarray*}
\varphi_{m,p}(t,x,\bar{x}) &\le& C
\biggl[|x-\bar x|^{\gamma p} + |x-\bar x|^{\trup{\gamma' p}{2}}
+|x-\bar{x}|^{\gamma_1 p } \nonumber\\
&& \qquad+|x-\bar{x}|^{\gamma_2 p }+ \int_0^t \mathrm{d}s \varphi_{m,p}(s,x,\bar{x})
 \biggr].
\end{eqnarray*}

Then by Gronwall's lemma we have

\begin{equation}
\label{s3.30-0} \varphi_{m,p}(t,x,\bar{x}) \le C
\biggl[|x-\bar x|^{\gamma p} + |x-\bar x|^{\trup{\gamma' p}{2}}
+|x-\bar{x}|^{\gamma_1 p } 
+|x-\bar{x}|^{\gamma_2 p } \biggr].
\end{equation}

With the constant $C$ not depending on $m$.
We recall that $\gamma,\gamma_1,\gamma_2 \in\,]0,1]$ and $\gamma'
\in\,]0,2[$. Therefore, \eqref{s3.30-0} implies 
\begin{equation}
\label{s3.28.a} \varphi_{m,p}(t,x,\bar{x})
\le C|x-\bar x|^{\rho p},
\end{equation}
with $\rho\in\, ]0, \min(\gamma,\gamma_1,\gamma_2,\tfrac{\gamma'}{2}) [$.

This is a contradiction, thus \eqref{s3.29} does imply \eqref{s3.28}.
This ends the proof of Proposition~\ref{pss3.1.2}.

\end{proof}


\begin{proof}(of Lemma~\ref{lss3.1.2}).\\
Fix $p\in[2,\infty[$ and set $w=x-\bar x$. From \eqref{s3.6}, we have the following:\vspace*{-2pt}
\[
\varphi_{n,p}^0(t,x,\bar x):=\E \bigl(\bigl |X_n(t,x)-X_n(t,
\bar{x}) \bigr |^p 1_{L_n(t)} \bigr) \le C \sum
_{i=0}^6 R_n^i(t,x,\bar
x),
\]
with\vspace*{-2pt}
\begin{eqnarray*}
R_n^0(t,x,\bar x) &=&|X^0(t,x)-X^0(t,\bar x)|^p\\
R_n^1(t,x,\bar x) &=&\E \biggl(\biggl\llvert \int
_0^t \int_{\R^3}
\bigl[G(t-s,x-y)-G(t-s,\bar {x}-y)\bigr] A\bigl(X_n(s,y)\bigr) M(
\mathrm{d}s,\mathrm{d}y) \biggr\rrvert ^p1_{L_n(t)} \biggr),
\\
R_n^2(t,x,\bar x) &=&\E {\bigl(} \bigl |\bigl\langle\bigl[G(t-
\cdot,x-\ast)-G(t-\cdot,\bar{x}-\ast )\bigr] B\bigl(X_n^-(\cdot,\ast)
\bigr), w^n\bigr\rangle_{\mathcal{H}_t} \bigr |^p1_{L_n(t)}
{\bigr)},
\\
R_n^3(t,x,\bar x) &=&\E \bigl(\bigl\llvert \bigl\langle
\bigl[G(t-\cdot,x-\ast)-G(t-\cdot,\bar {x}-\ast)\bigr] \bigl[B(X_n)-B
\bigl(X_n^-\bigr)\bigr] (\cdot,\ast),w^n \bigr
\rangle_{\mathcal{H}_t}\bigr\rrvert ^p1_{L_n(t)} \bigr),
\\
R_n^4(t,x,\bar x) &=&\E \bigl(\bigl\llvert \bigl\langle
\bigl[G(t-\cdot,x-\ast)-G(t-\cdot,\bar {x}-\ast)\bigr] D\bigl(X_n(
\cdot,\ast)\bigr),h \bigr\rangle_{\mathcal{H}_t}\bigr\rrvert ^p1_{L_n(t)}
\bigr),
\\
R_n^5(t,x,\bar x) &=&\E \biggl(\biggl\llvert \int
_0^t \int_{\R^3}
\bigl[G(t-s,x-\mathrm{d}y)-G(t-s,\bar{x}-\mathrm{d}y)\bigr] b\bigl(X_n(s,y)
\bigr) \,\mathrm{d}s \biggr\rrvert ^p 1_{L_n(t)} \biggr).
\end{eqnarray*}

It is well know that with the  {\bf Hypothesis \ref{HypH1}.(a)} we have that (see for instance \cite[theorem 3.1]{h-hu-nu})  
\begin{eqnarray}
\label{s3.32.1} R_n^0(t,x,\bar{x})&\le& C_1|w|^{\gamma_1 p}+ C_2|w|^{\gamma_2 p}
\end{eqnarray}

for $\gamma_1,\gamma_2\in ]0,1]$. 

Using Burkholder's inequality and then Plancherel's identity, we have\vspace*{-2pt}
%
\begin{eqnarray}
\label{s3.32} R_n^1(t,x,\bar{x})&=&\E \biggl(\biggl
\llvert \int_0^t \int_{\R^3}
\bigl[G(t-s,x-y)-G(t-s,\bar{x}-y) \bigr] A\bigl(X_n(s,y)\bigr)M(
\mathrm{d}s,\mathrm{d}y) \biggr\rrvert ^p1_{L_n(t)} \biggr)
\nonumber
\\
&=&\E \biggl(\biggl\llvert \sum_{j\in\mathbb{N}}\int
_0^t \bigl\langle \bigl[G(t-s,x-\ast)\nonumber
\\[-1pt]
&&\phantom{\E \biggl(\biggl\llvert\sum_{j\in\mathbb{N}}\int
_0^t \bigl\langle \bigl[}{}-G(t-s,
\bar{x}-\ast) \bigr] A\bigl(X_n(s,\ast)\bigr), e_k(\ast)
\bigr\rangle_{\mathcal{H}}\, \mathrm{d}W_j(s) \biggr\rrvert
^p1_{L_n(t)} \biggr)
\nonumber
\\[-8pt]
\\[-8pt]
&\le& C\E \biggl( \biggl[\int_0^t \mathrm{d}s
\sum_{j\in\mathbb{N}} \bigl\llvert \bigl\langle \bigl[G(t-s,x-
\ast)\nonumber
\\[-1pt]
&&\phantom{C\E \biggl( \biggl[\int_0^t \mathrm{d}s
\sum_{j\in\mathbb{N}} \bigl\llvert \bigl\langle \bigl[}{}-G(t-s,\bar{x}-\ast) \bigr] A\bigl(X_n(s,\ast )\bigr),
e_k(\ast) \bigr\rangle_{\mathcal{H}} \bigr\rrvert
^21_{L_n(s)} \biggr] \biggr)^{{\trup{p}{2}}}
\nonumber
\\[-1pt]
&=& C\E \biggl(\biggl\llvert \int_0^t
\mathrm{d}s \bigl \| \bigl[G(t-s,x-\ast )-G(t-s,\bar{x}-\ast) \bigr] A
\bigl(X_n(s,\ast)\bigr)\bigr  \|_{\mathcal{H}}^2 \biggr\rrvert
1_{L_n(s)} \biggr)^{{\trup{p}{2}}}.
\nonumber
\end{eqnarray}
The process $\{Z_n(t,x):=A(X_n(t,x))1_{L_n(t)}, (t,x)\in[0,T]\times
\IR^3\}$ satisfies the assumption \eqref{s3.21}. Indeed, this is a
consequence of the linear growth of $A$ and \eqref{s4.18}. Then,\vadjust{\goodbreak} by
applying Lemma~\ref{lss3.1.1} and using the Lipschitz continuity of
$A$, we obtain
%
\begin{eqnarray}
\label{s3.33} R_n^1(t,x,\bar{x}) &\le& C \biggl\{ |x-
\bar{x}|^{\gamma p} + |x-\bar{x}|^{\gamma' p/2}  \nonumber
\\
&&\phantom{C\biggl\{} {} +\int_0^t \mathrm{d}s
\Bigl[ \sup_{z_1-z_2=w} \E \bigl( \bigl | X_n(s,z_1)-X_n(s,z_2) \bigr |^p 1_{L_n(s)} \bigr) \Bigr]\biggr
\}
\end{eqnarray}
with $\gamma \in\,]0,1] $ and $\gamma' \in\,]0,2]$.

For a given function $\rho: [0,T]\times\IR^3\to\IR$ and $t\in
[0,T]$, let $\tau_n$ be the operator defined by
%
\begin{equation}
\label{s3.34} \tau_n(\rho)=\rho \bigl(\bigl(s+2^{-n}
\bigr)\wedge t, x \bigr).
\end{equation}
Let $\mathcal{E}_n$ be the closed subspace of $\mathcal{H}_T$
generated by the orthonormal system of functions
\[
2^nT^{-1}1_{\Delta_i}(\cdot)\otimes
e_j(\ast),\qquad i=0,\ldots,2^n-1,\ j=1,\ldots,n,
\]
and denote by $\pi_n$ the orthogonal projection on $\mathcal{E}_n$.
Notice that $\pi_n\circ\tau_n$ is
a bounded operator on $\mathcal{H}_T$, uniformly in $n$.

Since $X_n^-(s,\ast)$ is $\mathcal{F}_{s_n}$-measurable, by using the
definition of $w^n$ we easily see that
\begin{eqnarray*}
R_n^2(t,x,\bar{x}) &=&\E \biggl(\biggl\llvert \int
_0^t \int_{\R^3}(
\pi_n\circ\tau_n) \bigl(\bigl[G(t-\cdot,x-\ast)-G(t-
\cdot,\bar{x}-\ast)\bigr]
\\
&&\phantom{\E\bigl(|} {}  \times B\bigl(X_n^-(
\cdot,\ast)\bigr) \bigr) (s,y) M(\mathrm{d}s,\mathrm{d}y) \biggr\rrvert
^p1_{L_n(s)} \biggr).
\end{eqnarray*}
By Burkholder's inequality and the properties of the operator $\pi_n\circ\tau_n$, this last expression is bounded up to a constant by
\[
\E \biggl(\int_0^t \mathrm{d}s \bigl\llVert
\bigl(\bigl[G(t-s,x-\ast)-G(t-s,\bar {x}-\ast)\bigr] B\bigl(X_n^-(s,
\ast)\bigr) \bigr) \bigr\rrVert _{\mathcal{H}}^2 1_{L_n(s)}
\biggr)^{{\trup{p}{2}}}.
\]
The properties of the function $B$ along with \eqref{s4.18} imply that
the process $\{Z_n(t,x):=B(X_n^-(t,x))1_{L_n(t)}, (t,x)\in[0,T]\times
\IR^3\}$ satisfies the hypotheses of Lemma~\ref{lss3.1.1}. This yields
%
\begin{eqnarray}
\label{s3.35} R_n^2(t,x,\bar{x}) &\le& C \biggl\{ |x-
\bar{x}|^{\gamma p} + |x-\bar{x}|^{\gamma' p/2}  \nonumber
\\
&&\phantom{C\biggl\{} {} +\int_0^t \mathrm{d}s
\Bigl[ \sup_{z_1-z_2=w} \E \bigl( \bigl | X_n^-(s,z_1)-X_n^-(s,z_2) \bigr |^p 1_{L_n(s)} \bigr) \Bigr]\biggr
\},
\end{eqnarray}

where as before, $\gamma \in\,]0,1] $ and $\gamma' \in\,]0,2]$.\vadjust{\goodbreak}

Cauchy--Schwarz's inequality along with \eqref{s3.101} yield
\begin{eqnarray*}
R_n^3(t,x,\bar{x}) &\le& C n^{\trup{3p}{2}}2^{n{\trup{p}{2}}}
\\
&&{} \times\E \biggl( \int_0^t \mathrm{d}s \bigl\|
\bigl[G(t-s,x-\ast)-G(t-s,\bar {x}-\ast)\bigr]
\\
&&\phantom{\times\E \bigl( \int_0^t \mathrm{d}s \bigl\|} {}   \times\bigl[B(X_n)-B
\bigl(X_n^-\bigr)\bigr](s,\ast)1_{L_n(s)} \bigr\|_{\mathcal
{H}}^2
\biggr)^{{\trup{p}{2}}}.
\end{eqnarray*}
Notice that an upper bound for the second factor on the right-hand side
of the preceding inequality could be obtained using
Lemma~\ref{lss3.1.1} with $Z_n(t,x):=[B(X_n(t,x))-B(X_n^-(t,x))]
1_{L_n}(t)$. However, this would not be a good strategy to compensate the
first factor (which explodes when $n\to\infty$). Instead, we will try
to quantify the discrepancy between $B(X_n(t,x))$ and $B(X_n^-(t,x))$.
This can be achieved by transferring the increments of the Green
function to increments of the process 
%
\begin{equation}
\label{s3.351} \hat{B}\bigl(X_n(t,x)\bigr)=\bigl[B
\bigl(X_n(t,x)\bigr)-B\bigl(X_n^-(t,x)\bigr)\bigr],
\end{equation}
in the same manner as we did in the proof of Lemma~\ref{lss3.1.1} (see
\cite{dss}, pages 19--20 for the original idea).

Indeed, similarly as in \eqref{s3.23}, we obtain
%
\begin{equation}
\label{s3.36} R_n^3(t,x,\bar{x}) \le C
n^{\trup{3p}{2}}2^{n{\trup{p}{2}}} \sum_{i=1}^4
\E \bigl(\bigl |K_i^t(x,\bar{x}) \bigr |^{{\trup{p}{2}}}1_{L_n(t)}
\bigr),
\end{equation}
where for any $i=1,\ldots,4$, $K_i^t(x,\bar x)$ is given by
$J_i^t(x,\bar x)$ of Lemma~\ref{lss3.1.1} with $Z_n$ replaced by $\hat
{B}(X_n)$.

%

With  the definition of $\hat{B}(X_n)$ given in
\eqref{s3.351}, we easily get
%
\begin{eqnarray}
\label{s3.37} && \E \bigl( \bigl | \hat{B}\bigl(X_n(s,x-y)\bigr)-\hat{B}
\bigl(X_n(s,\bar{x}-y)\bigr) \bigr |^p 1_{L_n(s)} \bigr)
\nonumber
\\
&&\quad \le C \bigl[\E \bigl( \bigl | X_n(s,x-y)-X_n^-(s,x-y)
\bigr |^p 1_{L_n(s)} \bigr)
\nonumber
\\[-8pt]
\\[-8pt]
&&\phantom{\quad \le C\bigl[} {}  +\E \bigl( \bigl | X_n(s,
\bar{x}-y)-X_n^-(s,\bar{x}-y) \bigr |^p 1_{L_n(s)} \bigr)
\bigr]
\nonumber
\\
&&\quad \le C n^{\trup{3p}{2}}2^{-np(\nu+1)/2},
\nonumber
\end{eqnarray}
uniformly in $(s,x,y)\in[0,T]\times\IR^3\times\IR^3$, where the
last bound is obtained by using \eqref{s4.19}.
This estimate will be applied to the study of the right-hand side of
\eqref{s3.36}.

For $i=1$, \eqref{s3.24} with $Z_n(s,y):=\hat B(X_n(s,y))1_{L_n(s)}$,
along with \eqref{s3.37} yields
%
\begin{equation}
\label{s3.38} \E\bigl(\bigl |K_1^t(x,\bar{x})\bigr |^{{\trup{p}{2}}}1_{L_n(t)}
\bigr) \le C n^{\trup
{3p}{2}}2^{-np(\nu+1)/2}.
\end{equation}

Let $\mu_2(x,\bar{x})$ be as in \eqref{s3.241}. Since $x,\bar{x}
\in K$, and $K$ is bounded,
\[
\sup_{x,\bar{x}\in K}\mu_2(x,\bar{x})\le C,
\]
for some finite constant $C>0$. Hence, \eqref{s3.25}, \eqref{s3.26}
(with the same choice of $Z_n$ as before) together with \eqref{s3.37} gives
%
\begin{equation}
\label{s3.39} \E\bigl(\bigl |K_2^t(x,\bar{x})\bigr |^{{\trup{p}{2}}}1_{L_n(t)}
\bigr) + \E \bigl(\bigl |K_3^t(x,\bar{x})\bigr |^{{\trup{p}{2}}}1_{L_n(t)}
\bigr) \le C n^{\trup{3p}{2}}2^{-np(\nu+1)/2}.
\end{equation}

Proceeding as in \eqref{s3.27}, but replacing $Z_n(s,y)$ by $\hat
B(X_n(s,y))1_{L_n(s)}$, we obtain
\[
\E\bigl(\bigl |K_4^t(x,\bar{x})\bigr |^{{\trup{p}{2}}}
1_{L_n(t)}\bigr) \le C |x-\bar{x}|^{\gamma'p/2} \int
_0^t \mathrm{d}s\sup_{y\in\R^3}
\E \bigl( \bigl | \hat{B}\bigl(X_n(s,y)\bigr) \bigr |^p1_{L_n(s)}
\bigr).
\]
By the definition of $\hat B(X_n)$, and applying \eqref{s4.19}, we have
\[
\sup_{(s,y)\in[0,T]\times\R^3} \E \bigl( \bigl | \hat {B}\bigl(X_n(s,y)
\bigr) \bigr |^p1_{L_n(s)} \bigr) \le C n^{\trup{3p}{2}}
2^{-np(\nu+1)/2}.
\]
Thus,
%
\begin{equation}
\label{s3.40} \E\bigl(\bigl |K_4^t(x,\bar{x})\bigr |^{{\trup{p}{2}}}
1_{L_n(t)}\bigr)\le C n^{\trup
{3p}{2}}2^{-np(\nu+1)/2}.
\end{equation}
Putting together \eqref{s3.36} and \eqref{s3.38}--\eqref{s3.40} yields
%
\begin{equation}
\label{s3.41} R_n^3(t,x,\bar{x}) \le C f_n,
\end{equation}
where $f_n:=n^{3p} 2^{-np [{{-np(\nu+1)/2}}-{\trup
{1}{2}} ]}=n^{3p} 2^{-np\nu/2}$. Since $\nu\in\,]0,1]$, $\lim_{n\to\infty}f_n=0$. Notice that each member of the sequence 
$\{f_n\}$ is positive.

To estimate the term
$R_n^4(t,x,\bar{x})$ se use  first
Cauchy--Schwarz's inequality and then, by applying Lemma~\ref{lss3.1.1}
with $Z_n$ replaced by $D(X_n) 1_{L_n}$.
The Lipschitz continuity of $D$ along with the estimate \eqref{s4.18}
ensure that assumption \eqref{s3.21} is satisfied. We obtain
%
\begin{eqnarray}
\label{s3.42} R_n^4(t,x,\bar{x}) &\le&\| h
\|_{\mathcal{H}_t}^p \E \bigl( \bigl |\bigl \|\bigl[G(t-\cdot,x-\ast)-G(t-
\cdot,\bar{x}-\ast)\bigr] D\bigl(X_n(\cdot,\ast)\bigr)
1_{L_n(t)} \bigr \|_{\mathcal{H}_t}^2\bigr  |^{{\trup{p}{2}}} \bigr)
\nonumber
\\
&\le& C \biggl\{ |x-
\bar{x}|^{\gamma p} + |x-\bar{x}|^{\gamma' p/2}  \nonumber
\\
&&\phantom{C\biggl\{} {} +\int_0^t \mathrm{d}s
\Bigl[ \sup_{z_1-z_2=w} \E \bigl( \bigl | X_n(s,z_1)-X_n(s,z_2) \bigr |^p 1_{L_n(s)} \bigr) \Bigr]\biggr
\},
\end{eqnarray}

where as before, $\gamma \in\,]0,1] $ and $\gamma' \in\,]0,2]$.\vadjust{\goodbreak}

After having applied the change of variable $u\mapsto x-\bar{x}+y$, we have
\[
R_n^5(t,x,\bar{x}) =\E \biggl(\biggl\llvert \int
_0^t \int_{\R^3}G(t-s,x-
\mathrm{d}y)\bigl[b\bigl(X_n(s,y)\bigr) -b\bigl(X_n(s,y-x+
\bar{x})\bigr) \bigr]\,\mathrm{d}s \biggr\rrvert ^p1_{L_n(t)}
\biggr).
\]
Applying H\"older's inequality, we obtain
%
\begin{eqnarray}
\label{s3.31}
&&R_n^5(t,x,\bar{x}) \nonumber
\\
&&\quad\le  \biggl(\int
_0^t\int_{\R^3} G(t-s,x-
\mathrm{d}y)\,\mathrm{d}s \biggr)^{p-1}
\nonumber
\\
&&\qquad  {} \times\int_0^t \int_{\R^3}
G(t-s,x-\mathrm{d}y)\E \bigl( \bigl |b\bigl(X_n(s,y)\bigr)-b
\bigl(X_n(s,y-x+\bar{x})\bigr) \bigr |^p 1_{L_n(s)}
\bigr)\,\mathrm{d}s\nonumber
\\
&&\quad \le  C\int_0^t \mathrm{d}s \sup
_{y\in\R^3}\E \bigl( \bigl |X_n(s,x-y)-X_n(s,
\bar{x}-y) \bigr |^p1_{L_n(s)} \bigr).
\nonumber
\\
&&\quad \le  C\int_0^t \mathrm{d}s \sup
_{z_1-z_2=w}\E \bigl( \bigl |X_n(s,z_1)-X_n(s,z_2) \bigr |^p1_{L_n(s)} \bigr)
\end{eqnarray}

Bringing together the inequalities \eqref{s3.32.1}, \eqref{s3.33}, \eqref{s3.35},
\eqref{s3.41}, \eqref{s3.42} and \eqref{s3.31}, yields
\begin{eqnarray}
\label{s3.41.1}
&& \sup_{z_1-z_2=w} \E \bigl(\bigl |X_n(t,z_1)-X_n(t,z_2) \bigr |^p
1_{L_n(t)} \bigr)
\nonumber
\\
&& \quad \le C \biggl\{ f_n+ |x-\bar{x}|^{\gamma p } +|x-\bar{x}|^{\gamma' \trup{p}{2} }
+|x-\bar{x}|^{\gamma_1 p } +|x-\bar{x}|^{\gamma_2 p }
\nonumber
\\
&&\phantom{\quad \le C\biggl\{} {}  + \int
_0^t \mathrm{d}s \Bigl[ \sup_{z_1-z_2=w} \E \bigl( \bigl | X_n(s,z_1)-X_n(s,z_2)\bigr  |^p 1_{L_n(s)} \bigr) \Bigr]
\nonumber
\\
&&\phantom{\quad \le C\biggl\{} {}  + \int_0^t
\mathrm{d}s \Bigl[ \sup_{z_1-z_2=w} \E \bigl( \bigl |X_n^-(s,z_1)-X_n^-(s,z_2) \bigr |^p 1_{L_n(s)} \bigr) \Bigr] \biggr
\}.
\end{eqnarray}
With this, we see that $\varphi_{n,p}^0(t,x,\bar x)$ is bounded by the
right-hand side of \eqref{s3.41.1}.

Finally, we prove that the same bound holds for $\varphi
_{n,p}^-(t,x,\bar x)$ too. Indeed,
For every $i=1,\ldots,5$, we consider the terms $R_n^i(t,x,\bar x)$
defined in the first part of the proof,
and we replace the domain of integration of the time variable $s$
($[0,t]$) by $[0,t_n]$. We denote the corresponding new expressions by
$S_n^i(t,x,\bar x)$.
From \eqref{s3.8.2}, we obtain the following
\[
\varphi_{n,p}^-(t,x,\bar x) \le C \sum_{i=1}^5
S_n^i(t,x,\bar x).
\]
Since $t_n\le t$, it can be checked that, similarly as for
$R_n^i(t,x,\bar x)$,
$S_n^i(t,x,\bar x)$, $i=1,\ldots,5$, are bounded by \eqref{s3.33},
\eqref{s3.35}, \eqref{s3.41}, \eqref{s3.42}, \eqref{s3.31},
respectively. This ends the
proof of the lemma.
\end{proof}

\subsection{Increments in time}
\label{ss3.2}

Throughout this section, we fix $t_0\in\,]0,T]$, and a compact set
$K\subset\IR^3$. We shall prove the following proposition.
%
\begin{proposition}
\label{pss3.2.1}
Assume that {\bf Hypothesis 1} and {\bf Hypothesis 2} holds. Fix $t, \bar t\in[t_0,T]$ and 
set $\kappa \in\, \Big]0,\min\Big(\gamma_1,\gamma_2,\gamma,\tfrac{\gamma'}{2}\Big) \Big[$.
Then for any $p\in[1,\infty)$ there exists a finite constant $C$ such that
%
\begin{equation}
\label{s3.43} \sup_{n\ge1} \sup_{x\in K}\bigl \Vert
\bigl(X_n(t,x) - X_n(\bar t,x)\bigr) 1_{L_n(\bar t)}
\bigr \Vert_p \le C|t-\bar t|^\rho.
\end{equation}
with $\rho \in\, \Big]0,\min\Big(\gamma_1,\gamma_2,\gamma,\tfrac{\gamma'}{2},\tfrac{\nu+1}{2},
\tfrac{\rho_1+\kappa}{2},\tfrac{\rho_2}{2}\Big) \Big[$

\end{proposition}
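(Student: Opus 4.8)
The proof runs parallel to the space-increment analysis of Subsection~\ref{ss3.1}, the essential difference being that the genuinely temporal contributions are now routed through the covariance conditions \eqref{H2-a}, \eqref{H2-c1} and \eqref{H2-c2} of {\bf Hypothesis~2}. Assume without loss of generality $t_0\le\bar t\le t\le T$ and set $h=t-\bar t$. Starting from \eqref{s3.6}, I would expand $X_n(t,x)-X_n(\bar t,x)$ into the deterministic increment $X^0(t,x)-X^0(\bar t,x)$ together with the dynamical terms carrying the coefficients $A$ (stochastic integral), $B$ (tested against $w^n$), the discrepancy $B(X_n)-B(X_n^-)$, $D$ (tested against $h$), and $b$ (pathwise drift), exactly in the pattern of the terms $R_n^0,\dots,R_n^5$ of Lemma~\ref{lss3.1.2}. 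The structural novelty is that each convolution against the Green function must be split along the time axis; for the stochastic integral this reads
\[
\int_0^{\bar t}\int_{\R^3}\bigl[G(t-s,x-y)-G(\bar t-s,x-y)\bigr]A\bigl(X_n(s,y)\bigr)\,M(\mathrm{d}s,\mathrm{d}y)+\int_{\bar t}^{t}\int_{\R^3}G(t-s,x-y)A\bigl(X_n(s,y)\bigr)\,M(\mathrm{d}s,\mathrm{d}y),
\]
a \emph{near} part carrying the temporal increment of the kernel on $[0,\bar t]$ and a \emph{far} part carrying the raw kernel on the short interval $[\bar t,t]$; the $B$-, $D$- and $b$-terms decompose in the same way.

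The routine terms I would dispose of first. The deterministic increment is controlled by the time regularity of the wave propagator under {\bf Hypothesis~\ref{HypH1}.(a)}, giving $C|h|^{\gamma_1 p}+C|h|^{\gamma_2 p}$ as in \cite[Theorem~3.1]{h-hu-nu}. The discrepancy term is treated exactly as $R_n^3$ in Lemma~\ref{lss3.1.2}: Cauchy--Schwarz with \eqref{s3.101} produces the exploding factor $n^{\trup{3p}{2}}2^{n\trup{p}{2}}$, which is absorbed by the bound \eqref{s4.19} on $X_n-X_n^-$, leaving a sequence $f_n\to0$. For the far part, Burkholder's inequality and the isometry \eqref{fundamental} reduce the $p$-th moment to $\bigl(\int_{\bar t}^t\mathrm{d}s\,\|G(t-s,x-\ast)\|_\hac^2\bigr)^{\trup{p}{2}}$ up to the uniform bound \eqref{s3.21} on $A(X_n)1_{L_n}$ (ensured by \eqref{s4.18}); condition \eqref{H2-a} then bounds the time integral by $C|h|^{\nu+1}$, yielding the exponent $\trup{\nu+1}{2}$. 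The $w^n$-term is brought to the same form without any exploding factor by means of the uniformly bounded projection operator $\pi_n\circ\tau_n$, and the $D$-term by Cauchy--Schwarz against the fixed norm $\|h\|_{\hac_t}$.

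The core, and the step I expect to be the main obstacle, is the near part. Following \cite{h-hu-nu}, I would prove the temporal analogue of Lemma~\ref{lss3.1.1}: inserting \eqref{fundamental} and using the surface-measure form $G(s,\mathrm{d}x)=\tfrac1{4\pi s}\sigma_s(\mathrm{d}x)$, the quantity $\|[G(t-s,x-\ast)-G(\bar t-s,x-\ast)]Z_n(s,\ast)\|_\hac^2$ must be rewritten, through a change of variables onto $S^2\times S^2$, as integrals of $f$ evaluated along the parametrizations $s(\xi+\eta)+h(\xi+\eta)$, $s(\xi+\eta)+h\xi$, $s(\xi+\eta)+h\eta$ and $s(\xi+\eta)$ that appear in \eqref{H2-c1}--\eqref{H2-c2}. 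Decomposing the resulting bilinear form into a first-order increment of $f$, controlled by \eqref{H2-c1} with bound $C|h|^{\rho_1}$, and a second-order increment, controlled by \eqref{H2-c2} with bound $C|h|^{\rho_2}$, and pairing the first-order piece with the spatial increment of $X_n$ over a separation of order $|h|$---already controlled with exponent $\kappa$ by Proposition~\ref{pss3.1.1} (i.e.\ \eqref{H2-b})---produces second-moment contributions of order $|h|^{\rho_1+\kappa}$ and $|h|^{\rho_2}$, hence the exponents $\trup{\rho_1+\kappa}{2}$ and $\trup{\rho_2}{2}$. The delicate point is exactly this bookkeeping: verifying that the $S^2\times S^2$ change of variables yields precisely the arguments of $f$ prescribed in \eqref{H2-c1}--\eqref{H2-c2}, and deciding where to spend the spatial regularity $\kappa$ so that these covariance increments are invoked with the correct powers.

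Finally I would assemble the estimates into $\E(|X_n(t,x)-X_n(\bar t,x)|^p1_{L_n})\le C\bigl[f_n+|h|^{\rho p}\bigr]$ for $\rho$ in the asserted range. Because every stochastic increment surviving on the right-hand side is a \emph{spatial} increment over a separation of order $|h|$, it enters through Proposition~\ref{pss3.1.1} as a direct input rather than as a self-referential quantity, so the estimate closes without a temporal Gronwall iteration; the vanishing sequence $f_n$ is then absorbed into $C|h|^{\rho p}$ exactly by the contradiction device used for Proposition~\ref{pss3.1.2}. The stated range for $\rho$ includes the purely spatial exponents $\gamma$ and $\trup{\gamma'}{2}$ for compatibility with the space estimate of Theorem~\ref{ts3.2}, even though the temporal bound is limited only by $\gamma_1,\gamma_2,\trup{\nu+1}{2},\trup{\rho_1+\kappa}{2}$ and $\trup{\rho_2}{2}$. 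Since the constant $C$ is uniform in $n\ge1$ and, by compactness of $K$, in $x\in K$, this is precisely \eqref{s3.43}.
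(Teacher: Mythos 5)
Your proposal is correct and is, in all essentials, the paper's own proof: the same decomposition into the terms $R_n^0,\dots,R_n^5$, the same near/far splitting of each convolution along the time axis, Burkholder plus \eqref{H2-a} for the far parts (exponent $\trup{\nu+1}{2}$), Cauchy--Schwarz with \eqref{s3.101} and \eqref{s4.19} for the discrepancy term $B(X_n)-B(X_n^-)$, and the ``temporal analogue of Lemma~\ref{lss3.1.1}'' you describe is exactly Lemma~\ref{lss3.2.1}, including the $S^2\times S^2$ parametrization, the first/second-order increments of $f$ controlled by \eqref{H2-c1}--\eqref{H2-c2}, and the verification of its hypotheses \eqref{s3.44}--\eqref{s3.45} via \eqref{s4.18} and Proposition~\ref{pss3.1.1}, which is how $\kappa$ enters.

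The one place you genuinely deviate is the endgame. The paper absorbs $f_n$ (and the constant $1$) by introducing $\varPsi_{n,x,p}^{\bar t}(t)=\E\bigl(|X_n(\bar t,x)-X_n(t,x)|^p1_{L_n(\bar t)}\bigr)$, dominating $f_n$ and $1$ by $C\int_0^{\bar t}\bigl[1+\varPsi_{n,x,p}^{\bar t}(s)\bigr]\,\mathrm{d}s$, and then invoking Gronwall's lemma; you observe instead, correctly, that the assembled inequality contains no self-referential integral term --- every surviving stochastic increment is a spatial increment of size $O(|h|)$ fed in through Proposition~\ref{pss3.1.1} --- so no Gronwall iteration is structurally needed, and you absorb $f_n$ by the contradiction device of Proposition~\ref{pss3.1.2}. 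The two devices serve the identical bookkeeping purpose, and your formulation is arguably the more transparent one, since the paper's Gronwall step is an artifice whose integral term is inserted only to dominate constants. One side remark of yours is inaccurate: the exponents $\gamma$ and $\trup{\gamma'}{2}$ are not present merely ``for compatibility'' with the space estimate; they constrain the temporal exponent genuinely, through the drift term $R_n^5$ (bounded via \eqref{s3.17}) and through the requirement $\kappa<\min\bigl(\gamma_1,\gamma_2,\gamma,\trup{\gamma'}{2}\bigr)$ in Lemma~\ref{lss3.2.1}; since the asserted range for $\rho$ is the minimum over all exponents anyway, this does not affect the result.
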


Notice that the parameter $\kappa$ is obtained from the H\"older continuity in space variable for the process $X_n$
and from this reason 
belongs to the interval $\Big]0,\min\Big(\gamma_1,\gamma_2,\gamma,\tfrac{\gamma'}{2}\Big) \Big[$ (see Proposition 
\ref{pss3.1.1}).

The next lemma is meant to play a similar r\^ole than Lemma~\ref
{lss3.1.1} but in this case, for integrals containing increments in
time of the Green function $G(t)$.

\begin{lemma}
\label{lss3.2.1}
Consider a sequence of stochastic processes $\{D_n(t,x), (t,x)\in
[0,T]\times\IR^3\}$, $n\ge1$, satisfying the following conditions:

For any $p\in[2,\infty[$,
%
\begin{equation}
\label{s3.44} \sup_n\sup_{(t,x)\in[t_0,T]\times\R^3} \E \bigl(
\bigl |D_n(t,x)\bigr  |^p \bigr)\le C.
\end{equation}
There exists $\kappa>0$ and for any $x,y\in K$,
%
\begin{equation}
\label{s3.45} \sup_n \sup_{t\in[t_0,T]} \E
\bigl( \bigl |D_n(t,x)-D_n(t,y) \bigr |^p \bigr)\le
C|x-y|^{\kappa p},
\end{equation}
where $C$ is a finite constant and $\rho>0$. Suppose in adition that hypotheses \ref{HypH1} and \ref{HypH2} 
(see page \pageref{HypH1}) are satisfied.

For $0\le t_0\le t\le\bar{t} \le T$ and $x\in K$, set
\[
J_n(t,\bar t,x)=\int_0^t
\mathrm{d}s \bigl \Vert D_n(t,\ast) \bigl[G(\bar t-s,x-\ast )-G(t-s,x-\ast)
\bigr]\bigr \Vert^2_{\mathcal{H}}.
\]
Then, for any $p\in[2,\infty[$ there exists a finite constant $C>0$
such that
%
\begin{equation}
\label{s3.47} \E \bigl(J_n(t,\bar t,x)^{\trup{p}{2}} \bigr) \le C
\bigl( |\bar{t}-t|^{\gamma p} \bigr),
\end{equation}
with $0<\gamma<\min\bigl(\kappa,\tfrac{\nu+1}{2},\tfrac{\rho_1+\kappa}{2},\tfrac{\rho_2}{2}\bigr)$.
\end{lemma}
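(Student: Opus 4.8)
The plan is to mirror the structure of Lemma~\ref{lss3.1.1}, replacing the spatial increments of the Green function by the time increments $G(\bar t-s,x-\ast)-G(t-s,x-\ast)$ and routing the analysis through {\bf Hypothesis~\ref{HypH2}} instead of {\bf Hypothesis~\ref{HypH1}.(b)}. First I would use \eqref{fundamental} to rewrite $J_n(t,\bar t,x)$ as the double spatial integral
\[
J_n(t,\bar t,x)=C\int_0^t \mathrm{d}s\int_{\R^3}\int_{\R^3} D_n(t,u)D_n(t,v)\,\Delta_sG(\mathrm{d}u)\,\Delta_sG(\mathrm{d}v)\,f(u-v),
\]
where $\Delta_sG(\mathrm{d}u):=G(\bar t-s,x-\mathrm{d}u)-G(t-s,x-\mathrm{d}u)$, and then expand the product of the two increments into four terms. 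On each term I would pass to the sphere parametrization $u=x-\rho_u\xi$, $v=x-\rho_v\eta$ with $\rho_u,\rho_v\in\{t-s,\bar t-s\}$ and $\xi,\eta\in S^2$, so that $G(\rho,x-\mathrm{d}u)$ contributes the weight $\frac{\rho}{4\pi}\sigma(\mathrm{d}\xi)$. Writing $r=t-s$ and $h=\bar t-t$ (so that $\bar t-s=r+h$) and using the reflection $\eta\mapsto-\eta$ together with the evenness of $f$, the four $f$-arguments become exactly $f(r(\xi+\eta)+h(\xi+\eta))$, $f(r(\xi+\eta)+h\xi)$, $f(r(\xi+\eta)+h\eta)$ and $f(r(\xi+\eta))$, i.e. precisely the four evaluations appearing in \eqref{H2-c1}--\eqref{H2-c2}, carrying the signs $+,-,-,+$ of the second-order difference $\bar{D}^2 f$.

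Next I would carry out the algebraic decomposition that ``transfers differences'' between the Green-function increments, the surface-measure weights $\rho_u\rho_v$, the $f$-arguments and the factors $D_n(t,\cdot)$, exactly in the spirit of the identity used for $h_1,\dots,h_4$ in Lemma~\ref{lss3.1.1}. The goal is to split $J_n$ into several pieces. The \emph{leading piece} pairs the full second-order difference $\bar{D}^2f$ (so that \eqref{H2-c2} applies, with its weight $s^2$) with a product $D_n(t,\cdot)D_n(t,\cdot)$ taken at a common reference point; bounding the $D_n$ factors by \eqref{s3.44} and using H\"older's inequality, this contributes $\le C|\bar t-t|^{\rho_2 p/2}$. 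The \emph{first-order pieces} pair a single difference of $f$ (controlled by \eqref{H2-c1}, weight $s$) with one bounded factor $D_n$ and one spatial increment $D_n(t,x-(r+h)\xi)-D_n(t,x-r\xi)$ over a displacement of size $\sim h$; estimating this increment by \eqref{s3.45} (which yields a power $h^{\kappa}$) and the $f$-integral by \eqref{H2-c1} (which yields $h^{\rho_1}$) gives a contribution $\le C|\bar t-t|^{(\rho_1+\kappa)p/2}$. Finally, the differing normalizations $(\bar t-s)$ versus $(t-s)$ of the two Green functions produce \emph{weight-difference pieces} carrying an extra factor $\sim h$; combined with the near-diagonal control \eqref{H2-a} these are bounded by $\le C|\bar t-t|^{(\nu+1)p/2}$, while a residual term quadratic in the increment of the $D_n$'s against a bounded $f$-weight gives $\le C|\bar t-t|^{\kappa p}$.

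Collecting the estimates and using that $h=|\bar t-t|\le 2T$ is bounded, so that the smallest exponent dominates, yields $\E(J_n(t,\bar t,x)^{p/2})\le C|\bar t-t|^{\gamma p}$ for every $\gamma<\min(\kappa,\tfrac{\nu+1}{2},\tfrac{\rho_1+\kappa}{2},\tfrac{\rho_2}{2})$, as claimed; all constants are uniform in $n$ precisely because \eqref{s3.44} and \eqref{s3.45} are. The main obstacle is the bookkeeping of the decomposition: unlike the spatial situation of Lemma~\ref{lss3.1.1}, where both spheres share the \emph{same} radius $t-s$ and the weights match automatically, here the two radii differ by $h$, so the grouping must simultaneously absorb the mismatch of the weights $\rho_u\rho_v$ and the shift of the $f$-arguments, arranging that each resulting term falls under exactly one of \eqref{H2-a}, \eqref{H2-c1}, \eqref{H2-c2} and is integrable in $s$ against the correct power $s$ or $s^2$. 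Controlling the near-diagonal region, where $\rho_u,\rho_v$ are small and $f$ is merely locally integrable, is exactly the point at which condition \eqref{H2-a} becomes indispensable.
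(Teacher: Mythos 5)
Your proposal is correct and follows essentially the same route as the paper: after rewriting $J_n$ via \eqref{fundamental} and passing to the spherical parametrization with $h=\bar t-t$, the paper decomposes into exactly your four pieces (its $Q^1$ is your quadratic-increment residual bounded by $|h|^{\kappa p}$, its $Q^2,Q^3$ are your first-order pieces bounded by $|h|^{(\rho_1+\kappa)p/2}$ via \eqref{H2-c1} and \eqref{s3.45}, and its $Q^4$ contains your second-order piece and the weight-mismatch terms, handled by \eqref{H2-c2}, \eqref{H2-a} and the argument of Hu--Huang--Nualart). The only cosmetic difference is that you split the $(\bar t-s)$-versus-$(t-s)$ weight mismatches off as separate pieces before applying \eqref{H2-c2}, whereas the paper keeps them inside $Q^2$ (via the splitting $(s+h)^2=s(s+h)+h(s+h)$) and inside $Q^4$ (deferring to the cited reference), with the same resulting exponents.
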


\begin{proof}
First of all we notice that, as a consequence of Burkholder's
inequality, the $L^p$-moment of the stochastic integral
\[
\int_0^t \int_{\R^3}
D_n(t,y)\bigl[G(\bar t-s,x-y)-G(t-s,x-y)\bigr] M(\mathrm{d}s,
\mathrm{d}y),
\]
is bounded up to a positive constant, by $\E (J_n(t,\bar
t,x)^{\trup{p}{2}} )$.

We write $J_n(t,\bar t,x)$ using \eqref{fundamental}. This gives
\begin{eqnarray*}
J_n(t,\bar t,x)&=& C\int_0^t
\mathrm{d}s \int_{\R^3}\int_{\R^3}
D_n(t,y)\bigl[G(\bar t-s,x-dy)-G(t-s,x-dy)\bigr]
\\
&&{} \times D_n(t,z)\bigl[G(\bar t-s,x-dz)-G(t-s,x-dz)\bigr]
f(y-z)\\
&=& C\int_0^t
\mathrm{d}s \int_{\R^3}\int_{\R^3}
D_n(t-s,x-y)\bigl[G(\bar t-t+s,dy)-G(s,dy)\bigr]
\\
&&{} \times D_n(t-s,x-z)\bigl[G(\bar t-t+s,dz)-G(s,dz)\bigr]
f(y-z)
\end{eqnarray*}
As was noted in \cite{h-hu-nu} the integral with respect to the space variables $y$ and $z$ is taken in the 
sphere $S^2$ in the three dimensional space, this is because of the structure of the fundamental solution $G$. We denote by
$\xi=\tfrac{y}{|y|}$ and $\eta=\tfrac{z}{|z|}$, moreover we denote by $\sigma(d\xi)$ and $\sigma(d\eta)$ the uniform measure
on $S^2$, so 
\begin{eqnarray*}
 G(s,dy)&=& \tfrac{1}{4\pi} \sigma(d\xi)\\
 G(s,dz)&=& \tfrac{1}{4\pi} \sigma(d\eta)
 \end{eqnarray*}

Denote by $h=\bar t - t$.
Then, we do the following decomposition
%
\begin{equation}
\label{s3.48} \E \bigl(J_n(t,\bar t,x)^{\trup{p}{2}} \bigr)\le C
\sum_{k=1}^4 \E \bigl(\bigl |Q^i(t,
\bar{t},x)\bigr |^{{\trup{p}{2}}} \bigr),
\end{equation}
where 
%
\begin{eqnarray*}
 Q^1(t,\bar{t},x)&:=&\int_0^t
\mathrm{d}s\int_{S^2} \int_{S^2} \bigl(s+h\bigr)^2 f\bigl([s+h]\xi-[s+h]\eta\bigl)\\
&&{}\times \biggl[D_n \big(t-s,x-(s+h)\xi \big)-D_n(t-s,x-s\xi) \biggr]\\
&&{}\times \biggl[D_n \biggl(t-s,x-(s+h)\eta\biggr)-D_n(t-s,x-s\eta) \biggr] \sigma(d\xi)\sigma(d\eta)\\
 Q^2(t,\bar{t},x)&:=&\int_0^t
\mathrm{d}s\int_{S^2} \int_{S^2}\Bigl[ \bigl(s+h\bigr)^2 f\Bigl([s+h]\xi-[s+h]\eta\Bigl)-
s(s+h) f\bigl(s\xi-[s+h]\eta\bigl)\Bigr]\\
&&{}\times D_n(t-s,x-s\xi)  \biggl[D_n \biggl(t-s,x-(s+h)\eta\biggr)-D_n(t-s,x-s\eta) \biggr] \sigma(d\xi)\sigma(d\eta)\\
 Q^3(t,\bar{t},x)&:=&\int_0^t
\mathrm{d}s\int_{S^2} \int_{S^2}\Bigl[ \bigl(s+h\bigr)^2 f\Bigl([s+h]\xi-[s+h]\eta\Bigl)-
s(s+h) f\bigl([s+h]\xi-s\eta\bigl)\Bigr]\\
&&{}\times D_n(t-s,x-s\eta)  \biggl[D_n \biggl(t-s,x-(s+h)\xi\biggr)-D_n(t-s,x-s\xi) \biggr] \sigma(d\xi)\sigma(d\eta)\\
 Q^4(t,\bar{t},x)&:=&\int_0^t
\mathrm{d}s\int_{S^2} \int_{S^2}\Bigl[ \bigl(s+h\bigr)^2 f\Bigl([s+h]\xi-[s+h]\eta\Bigl)-
s(s+h) f\bigl(s\xi-[s+h]\eta\bigl)\\
&&\hspace*{3cm} -s(s+h) f\bigl([s+h]\xi-s\eta\bigl) + s^2 f\bigl(s\xi-s\eta\bigl)
\Bigr]\\
&&{}\times D_n(t-s,x-s\eta) D_n(t-s,x-s\xi) \biggr] \sigma(d\xi)\sigma(d\eta)
\end{eqnarray*}%

Following the arguments of the proof of Theorem 4.1 in \cite{h-hu-nu}, we see that by using 
H\"older inequality, Cauchy-Schwarz inequality, hypothesis on $D$, Lemma 7.1 in  \cite{h-hu-nu} and condition \eqref{cond-f} we obtain 
%
\begin{eqnarray}
\label{s3.49} 
 \E\bigr(|Q^1(t,\bar{t},x)|^{p/2}\bigr)&\le& C \left(\int_0^t
\mathrm{d}s\int_{S^2} \int_{S^2} \bigl(s+h\bigr)^2 f\bigl([s+h]\xi-[s+h]\eta\bigl) \sigma(d\xi)\sigma(d\eta) \right)^{\tfrac{p}{2}-1}
\nonumber
\\
&&{}\times \int_0^t
\mathrm{d}s\int_{S^2} \int_{S^2} \bigl(s+h\bigr)^2 f\bigl([s+h]\xi-[s+h]\eta\bigl) \sigma(d\xi)\sigma(d\eta)
\nonumber
\\
&&{}\hspace*{1.5cm} \times \E\Biggl| \biggl[D_n \big(t-s,x-(s+h)\xi \big)-D_n(t-s,x-s\xi) \biggr]
\nonumber
\\
&&{}\hspace*{2.5cm}\times \biggl[D_n \biggl(t-s,x-(s+h)\eta\biggr)-D_n(t-s,x-s\eta) \biggr]\Biggr|^{p/2} 
\label{s3.52-3} 
\\
&\le& C \left(\int_0^t
\mathrm{d}s\int_{S^2} \int_{S^2} \bigl(s+h\bigr)^2 f\bigl([s+h]\xi-[s+h]\eta\bigl) \sigma(d\xi)\sigma(d\eta) \right)^{\tfrac{p}{2}-1}
\nonumber
\\
&&{}\times \int_0^t
\mathrm{d}s\int_{S^2} \int_{S^2} \bigl(s+h\bigr)^2 f\bigl([s+h]\xi-[s+h]\eta\bigl) \sigma(d\xi)\sigma(d\eta)
\nonumber
\\
&&{}\hspace*{1.5cm} \times \biggl[\E\Biggl| D_n \big(t-s,x-(s+h)\xi \big)-D_n(t-s,x-s\xi)\Biggr|^{p} \biggr]^{1/2}
\nonumber
\\
&&{}\hspace*{2.5cm}\times \biggl[\E\Biggl| D_n \biggl(t-s,x-(s+h)\eta\biggr)-D_n(t-s,x-s\eta) \Biggr|^{p} \biggr]^{1/2} 
\nonumber
\\
&\le& C |h|^{p\kappa} |\eta|^{p\kappa/2}|\xi|^{p\kappa/2}\left(\int_0^t
\mathrm{d}s\int_{S^2} \int_{S^2} \bigl(s+h\bigr)^2 f\bigl([s+h]\xi-[s+h]\eta\bigl) \sigma(d\xi)\sigma(d\eta) \right)^{\tfrac{p}{2}}
\nonumber
\\
&=& C |h|^{p\kappa} \left(\int_0^t \mathrm{d}s \int_{\R^3}\int_{\R^3}f(y-z)G(s+h,dy)G(s+h,dz) \right)^{\tfrac{p}{2}}
\nonumber
\\
&\le& C |h|^{p\kappa} \left( \int_0^t \mathrm{d}s \int_{|z|\le 2(s+h)} \frac{f(z)}{|z|} dz\right)^{\tfrac{p}{2}} 
\nonumber
\\
&\le& C |h|^{p\kappa} = C |\bar t- t|^{p\kappa}
\end{eqnarray}%

For $Q^2$, by making the trivial decomposition $(s+h)^2=s(s+h)+h(s+h)$ we get

\begin{eqnarray}\label{s3.52-a} 
\E \bigl( \bigl |Q^2(t,\bar{t},x) \bigr |^{{\trup{p}{2}}} \bigr) &\le& C \Bigg(\int_0^t
\mathrm{d}s\int_{S^2} \int_{S^2}s\bigl(s+h\bigr) \Bigl|f\Bigl([s+h]\xi-[s+h]\eta\Bigl)-
 f\bigl(s\xi-[s+h]\eta\bigl)\Bigr| \sigma(d\xi)\sigma(d\eta)
\nonumber
\\
&&{}\hspace*{1cm}\times |D_n(t-s,x-s\xi)|  \biggl|D_n \biggl(t-s,x-(s+h)\eta\biggr)-D_n(t-s,x-s\eta) \biggr|  \Bigg)^{\tfrac{p}{2}}
\nonumber
\\
&& +  C \Bigg(\int_0^t
\mathrm{d}s\int_{S^2} \int_{S^2}  h \bigl(s+h\bigr) f\Bigl([s+h]\xi-[s+h]\eta\Bigl) \sigma(d\xi)\sigma(d\eta)
\nonumber
\\
&&{}\hspace*{1cm}\times |D_n(t-s,x-s\xi)|  \biggl|D_n \biggl(t-s,x-(s+h)\eta\biggr)-D_n(t-s,x-s\eta) \biggr|  \Bigg)^{\tfrac{p}{2}}
\nonumber
\\
&=:& Q^{2,1} +  Q^{2,2}
\end{eqnarray}
For $ Q^{2,1}$ we use H\"older inequality, Cauchy-Schwarz inequality, assumptions \eqref{s3.44} and \eqref{s3.45}, condition \eqref{H2-c1} along 
with the change of variables $\eta\mapsto -\eta$ and we obtain

\begin{eqnarray}\label{s3.52-1} 
Q^{2,1} &\le& C \Bigg(\int_0^t
\mathrm{d}s\int_{S^2} \int_{S^2}s\bigl(s+h\bigr) \Bigl|f\Bigl([s+h]\xi-[s+h]\eta\Bigl)-
 f\bigl(s\xi-[s+h]\eta\bigl)\Bigr| \sigma(d\xi)\sigma(d\eta) \Bigg)^{\tfrac{p}{2}-1}
\nonumber
\\
&&{}\times\int_0^t
\mathrm{d}s\int_{S^2} \int_{S^2}s\bigl(s+h\bigr) \Bigl|f\Bigl([s+h]\xi-[s+h]\eta\Bigl)-
 f\bigl(s\xi-[s+h]\eta\bigl)\Bigr| \sigma(d\xi)\sigma(d\eta)
 \nonumber
 \\
 &&{}\hspace*{1cm}\times \Big(\E||D_n(t-s,x-s\xi)|^{p}\Big)^{1/2} \Bigg[\E \biggl|D_n \biggl(t-s,x-(s+h)\eta\biggr)-
 D_n(t-s,x-s\eta) \biggr|^p  \Bigg]^{\tfrac{1}{2}}
\nonumber
\\
&\le& C |h|^{p\kappa/2} |\eta|^{p\kappa/2}\Bigg(\int_0^t
\mathrm{d}s\int_{S^2} \int_{S^2}s\bigl(s+h\bigr) \Bigl|f\Bigl([s+h]\xi-[s+h]\eta\Bigl)-
 f\bigl(s\xi-[s+h]\eta\bigl)\Bigr| \sigma(d\xi)\sigma(d\eta) \Bigg)^{\tfrac{p}{2}}
\nonumber
\\
&\le& C |h|^{p\kappa/2} |h|^{p\rho_1/2} = C |h|^{\tfrac{p\kappa+p\rho_1}{2}}  
\end{eqnarray}

with $\kappa,\rho_1\in\,]0,1]$.

For $ Q^{2,2}$ we use H\"older inequality, Cauchy-Schwarz inequality, assumptions \eqref{s3.44} and 
\eqref{s3.45}, condition \eqref{H2-c1} and we obtain

\begin{eqnarray}
Q^{2,2} &\le& C |h|^{p/2}\Bigg(\int_0^t
\mathrm{d}s\int_{S^2} \int_{S^2} \bigl(s+h\bigr) f\Bigl([s+h]\xi-[s+h]\eta\Bigl) \sigma(d\xi)\sigma(d\eta) \Bigg)^{\tfrac{p}{2}-1}
\nonumber
\\
&&{}\times\int_0^t
\mathrm{d}s\int_{S^2} \int_{S^2} \bigl(s+h\bigr) f\Bigl([s+h]\xi-[s+h]\eta\Bigl) \sigma(d\xi)\sigma(d\eta)
 \nonumber
 \\
 &&{}\hspace*{1cm}\times \Big(\E||D_n(t-s,x-s\xi)|^{p}\Big)^{1/2} \Bigg[ \E\biggl|D_n \biggl(t-s,x-(s+h)\eta\biggr)-
 D_n(t-s,x-s\eta) \biggr|  \Bigg]^{\tfrac{1}{2}}
\nonumber
\\
&\le& C |h|^{p/2} |h|^{p\kappa/2} |\eta|^{p\kappa/2}\Bigg(\int_0^t
\mathrm{d}s\int_{S^2} \int_{S^2} \bigl(s+h\bigr) f\Bigl([s+h]\xi-[s+h]\eta\Bigl)\sigma(d\xi)\sigma(d\eta) \Bigg)^{\tfrac{p}{2}}
\nonumber
\\
&\le& C |h|^{\frac{p+p\kappa}{2}} \Bigg(\int_0^t
\frac{\mathrm{d}s}{s+h}\int_{\R^3} \int_{\R^3} f(y-z)G(s+h,dy)G(s+h,dz) \Bigg)^{\tfrac{p}{2}}
\nonumber
\\
&\le& C |h|^{\frac{p+p\kappa}{2}} \Bigg(\int_0^t
\frac{\mathrm{d}s}{s+h}\int_{|z|\le 2(s+h)}  \frac{f(z)}{|z|}dz \Bigg)^{\tfrac{p}{2}}
\nonumber
\\
&\le& C |h|^{\frac{p+p\kappa}{2}}  \label{s3.52-2} 
\end{eqnarray}

with $\kappa,\rho_1\in\,]0,1]$. Hence, by \eqref{s3.52-1} and \eqref{s3.52-2} we set

\begin{eqnarray}\label{s3.52} 
\E \bigl( \bigl |Q^2(t,\bar{t},x) \bigr |^{{\trup{p}{2}}} \bigr) &\le& C |h|^{\tfrac{p\kappa+p\rho_1}{2}}  
\end{eqnarray}

with $\kappa,\rho_1\in\,]0,1]$.

Similarly,
%
\begin{equation}
\label{s3.53} \E \bigl( \bigl |Q^3(t,\bar{t},x) \bigr |^{{\trup{p}{2}}} \bigr) \le C |h|^{\tfrac{p\kappa+p\rho_1}{2}}  
,
\end{equation}
where $\kappa,\rho_1\in\,]0,1]$.

By applying H\"older's and Cauchy--Schwarz's inequalities along with
\eqref{s3.44}, we get
%
\begin{eqnarray*}
 \E\left(|Q^4(t,\bar{t},x)|^{p/2}\right) &\le& C \Biggl(\int_0^t
\mathrm{d}s\int_{S^2} \int_{S^2}\Bigl| \bigl(s+h\bigr)^2 f\Bigl([s+h]\xi-[s+h]\eta\Bigl)-
s(s+h) f\bigl(s\xi-[s+h]\eta\bigl)\\
&&\hspace*{1.5cm} -s(s+h) f\bigl([s+h]\xi-s\eta\bigl) + s^2 f\bigl(s\xi-s\eta\bigl)
\Bigr| \sigma(d\xi)\sigma(d\eta)  \Biggr)^{\tfrac{p}{2}-1}\\
&&{}\times \int_0^t
\mathrm{d}s\int_{S^2} \int_{S^2}\Bigl| \bigl(s+h\bigr)^2 f\Bigl([s+h]\xi-[s+h]\eta\Bigl)-
s(s+h) f\bigl(s\xi-[s+h]\eta\bigl)\\
&&\hspace*{1.5cm} -s(s+h) f\bigl([s+h]\xi-s\eta\bigl) + s^2 f\bigl(s\xi-s\eta\bigl)
\Bigr| \sigma(d\xi)\sigma(d\eta) \\
&&{}\times \bigg[\E|D_n(t-s,x-s\eta)|^p\bigg]^{1/2} \bigg[\E|D_n(t-s,x-s\xi)|^p \bigg]^{1/2} 
\\ &\le& C \Biggl(\int_0^t
\mathrm{d}s\int_{S^2} \int_{S^2}\Bigl| \bigl(s+h\bigr)^2 f\Bigl([s+h]\xi-[s+h]\eta\Bigl)-
s(s+h) f\bigl(s\xi-[s+h]\eta\bigl)\\
&&\hspace*{1.5cm} -s(s+h) f\bigl([s+h]\xi-s\eta\bigl) + s^2 f\bigl(s\xi-s\eta\bigl)
\Bigr| \sigma(d\xi)\sigma(d\eta)  \Biggr)^{\tfrac{p}{2}}
\\
&\le& C\Big(Q^{4,1}+Q^{4,2}+Q^{4,3}+Q^{4,4}  \Big)
\end{eqnarray*}%
Where $Q^{4,i}$, for $i=1,\ldots,4$, is defined exactly as $R_4^i$ in \cite[page 20]{h-hu-nu}. Then, by following 
exactly the same procedure as in \cite{h-hu-nu} we arrive to

\begin{eqnarray}\label{s3.55}
 \E\left(|Q^4(t,\bar{t},x)|^{p/2}\right) &\le& C \Bigl(|h|^{\tfrac{p\rho_2}{2}} +|h|^{p\tfrac{\rho_1+1}{2}} +
 |h|^{p\tfrac{\nu+1}{2}} +|h|^{p(1-\epsilon)} \Bigr)
\end{eqnarray}%

%
with  $\nu,\rho_1\in\,]0,1]$ and $\rho_2\in\,]0,2]$ and where the last inequality is true for any $\epsilon>0$.

The inequalities \eqref{s3.48}, \eqref{s3.52}, \eqref{s3.53}, \eqref
{s3.55} imply \eqref{s3.47} which completes the proof.
\end{proof}
%

\begin{proof}(of Proposition~\ref{pss3.2.1}).\\
Fix $0\le t\le\bar{t}\le T$, $x\in K$, $p\in[2,\infty[$, and
according to \eqref{s3.6} consider the decomposition
\[
\E \bigl(\bigl |X_n(\bar{t},x)-X_n(t,x) \bigr |^p
1_{L_n(\bar{t})} \bigr) \le C \sum_{i=0}^6
R_n^i(t,\bar{t},x),
\]
where
\begin{eqnarray*}
R_n^0(t,\bar{t},x) &=&\bigl| X^0(t,x)-X^0(\bar t,x)\bigr |^p
\\
R_n^1(t,\bar{t},x) &=&\E \biggl(\biggl\llvert \int
_0^{\bar{t}}\int_{\R^3} \bigl[G(
\bar {t}-s,x-y)-G(t-s,x-y) \bigr]
\\
&&\phantom{\E\bigl(|} {}   \times A\bigl(X_n(s,y)\bigr)
M(\mathrm{d}s,\mathrm{d}y) \biggr\rrvert ^p1_{L_n(\bar
{t})} \biggr),
\\
R_n^2(t,\bar{t},x) &=&\E {\bigl(} \bigl | \bigl\langle
\bigl[G(\bar{t}-\cdot,x-\ast )-G(t-\cdot,x-\ast) \bigr] B\bigl(X_n^-(
\cdot,\ast)\bigr),w^n \bigr\rangle _{\mathcal{H}_{\bar t}}
\bigr |^p1_{L_n(\bar{t})} {\bigr)},
\\
R_n^3(t,\bar{t},x) &=&\E \bigl( \bigl | \bigl\langle \bigl[G(\bar{t}-
\cdot,x-\ast )-G(t-\cdot,x-\ast) \bigr]
\\
&&\phantom{\E\bigl(|\bigl\langle} {}  \times\bigl[B(X_n)-B
\bigl(X_n^-\bigr)\bigr](\cdot,\ast),w^n \bigr\rangle
_{\mathcal{H}_{\bar t}} \bigr |^p1_{L_n(\bar{t})} \bigr),
\\
R_n^4(t,\bar{t},x) &=&\E \bigl( \bigl | \bigl\langle \bigl[G(
\bar{t}-\cdot,x-\ast )-G(t-\cdot,x-\ast) \bigr] D\bigl(X_n(\cdot,\ast)
\bigr),h \bigr\rangle _{\mathcal{H}_{\bar t}} \bigr |^p1_{L_n(\bar{t})} \bigr),
\\
R_n^5(t,\bar{t},x) &=&\E \biggl(\biggl\llvert \int
_0^{\bar{t}} \int_{\R^3} \bigl[G(
\bar {t}-s,x-\mathrm{d}y)-G(t-s,x-\mathrm{d}y) \bigr] b\bigl(X_n(s,y)
\bigr) \,\mathrm{d}s \biggr\rrvert ^p1_{L_n(\bar
{t})} \biggr).
\end{eqnarray*}

Let $\gamma'=\min(\gamma_1,\gamma_2)$. By the assumptions on $\bigtriangleup v_0$ and $\bar v_0$ 
and by Lemma 4.9 in \cite{dss} we have
\begin{eqnarray}
\label{s3.570.1}
 R_n^0(t,\bar{t},x)\le C |t-\bar t|^{p\gamma'}
\end{eqnarray}

Similarly as for the term $R_n^1(t,x,\bar x)$ in the proof of Lemma~\ref{lss3.1.2} (see \eqref{s3.32}), we have
%
\begin{equation}
\label{s3.571}
 R_n^1(t,\bar{t},x) \le C\E \biggl(\int
_0^{\bar t} \mathrm{d}s \bigl \| \bigl[G(\bar t-s,x-\ast
)-G(t-s,x-\ast) \bigr] A\bigl(X_n(s,\ast)\bigr)\bigr  \|_{\mathcal{H}}^2
1_{L_n(s)} \biggr)^{{\trup{p}{2}}}.\nonumber
\end{equation}
This is bounded up to a positive constant by $R_n^{1,1}(t,\bar
{t},x)+R_n^{1,2}(t,\bar{t},x)$, where
%
\begin{eqnarray}
\label{s3.58} R_n^{1,1}(t,\bar{t},x) &=&\E \biggl(\biggl
\llvert \int_t^{\bar{t}} \bigl \| G(\bar{t}-s,x-\ast) A
\bigl(X_n(s,\ast)\bigr) \bigr \|_{\mathcal{H}}^2
1_{L_n(s)}\,\mathrm{d}s \biggr\rrvert \biggr)^{{\trup{p}{2}}}
\nonumber
\\[-8pt]
\\[-8pt]
&=&\E \biggl(\biggl\llvert \int_0^{\bar{t}-t} \bigl \|
G(s,x-\ast) A\bigl(X_n(\bar t-s,\ast)\bigr) \bigr \|_{\mathcal{H}}^2
1_{L_n(s)} \,\mathrm{d}s \biggr\rrvert \biggr)^{{\trup{p}{2}}}
\nonumber
\end{eqnarray}
and
%
\begin{eqnarray}
\label{s3.581} R_n^{1,2}(t,\bar{t},x) &=&\E \biggl(\biggl
\llvert \int_0^t \mathrm{d}s \bigl\llVert
\bigl[G(\bar t-s,x-\ast )
\nonumber
\\[-8pt]
\\[-8pt]
&&\phantom{\E \biggl(\biggl
\llvert \int_0^t \mathrm{d}s \bigl\llVert
\bigl[}{}-G(t-s,x-\ast)\bigr]A\bigl(X_n(s,\ast)\bigr)
\bigr\rrVert ^2_{\mathcal{H}} 1_{L_n(s)}\biggr\rrvert
\biggr)^{{\trup{p}{2}}}.\nonumber
\end{eqnarray}

Using Burkholder and then H\"older inequalities, the linear growth of $A$ and \eqref{s4.18}, we get 

%
\begin{eqnarray}
\label{s3.591}R_n^{1,1}(t,\bar{t},x) &\le& C(t-\bar t)^{\tfrac{p}{2}-1} \int_0^{t-\bar t}ds 
\left( \int_{\R^3} \int_{\R^3} G(s,x-dy)G(s,x-dz)f(y-z)\right)^{p/2}
\nonumber
\\
&&{}\times\Bigl(1+ \sup
_{(t,x)\in[0,T]\times\R^3}\E \bigl( \bigl | X_n(t,x) \bigr |^p1_{L_n(t)}
\bigr) \Bigr)
\\
&\le& C(t-\bar t)^{\tfrac{p}{2}-1} \int_0^{t-\bar t}ds 
\left( \int_{\R^3} \int_{\R^3} G(s,dy)G(s,dz)f(y-z)\right)^{p/2}
\nonumber
\\
&\le& C(t-\bar t)^{\tfrac{p}{2}-1} \int_0^{t-\bar t}  ds 
\left(  \int_{|z|\le 2s} \frac{f(z)}{|z|}dz \right)^{p/2}
\nonumber
\\
\label{s3.60}&\le& C (t-\bar t)^{\tfrac{p}{2}-1} \int_0^{t-\bar t} s^{\nu p/2} ds = C (t-\bar t)^{p\tfrac{\nu+1}{2}}.
\end{eqnarray}

Set $D_n(t,x)=A(X_n(t,x)) 1_{L_n(t)}$. Owing to {\bf Hypothesis 1}, \eqref{s4.18} and Proposition~\ref{pss3.1.1}, 
the conditions \eqref{s3.44}, \eqref{s3.45} of Lemma~\ref{lss3.2.1}
are satisfied with $\kappa\in\, \Big]0,\min\big(\gamma,\gamma_1,\gamma_2,\tfrac{\gamma'}{2}\big) \Big[$. Thus,
%
\begin{equation}
\label{s3.61} R_n^{1,2}(t,\bar{t},x)\le C \bigl( |
\bar{t}-t|^{\rho p}
\bigr),
\end{equation}
with $\rho \in\, \Big]0,\min\Big(\gamma_1,\gamma_2,\gamma,\tfrac{\gamma'}{2},\tfrac{\nu+1}{2},
\tfrac{\rho_1+\kappa}{2},\tfrac{\rho_2}{2} \Big) \Big[$.

It is easy to check that \eqref{s3.60} and
\eqref{s3.61} imply
%
\begin{equation}
\label{s3.63} R_n^1(t,\bar{t},x)\le C|
\bar{t}-t|^{\rho p },
\end{equation}
with $\rho \in\, \Big]0,\min\Big(\gamma_1,\gamma_2,\gamma,\tfrac{\gamma'}{2},\tfrac{\nu+1}{2},
\tfrac{\rho_1+\kappa}{2},\tfrac{\rho_2}{2}\Big) \Big[$.


With the same arguments as those applied in the study of the term
$R_n^2(t,x,\bar x)$ in the proof of Lemma~\ref{lss3.1.2}, we have
\[
R_n^2(t,\bar{t},x) \le C \E \biggl(\int
_0^{\bar t} \mathrm{d}s \bigl \Vert\bigl[G(\bar t-s,x+\ast
)-G(t-s,x-\ast)\bigr] B\bigl(X_n^-(s,\ast)\bigr)\bigr \Vert_{\mathcal{H}}^2
1_{L_n(s)} \biggr)^{\trup{p}{2}}.
\]
This yields $R_n^2(t,\bar{t},x)\le C(R_n^{2,1}(t,\bar
{t},x)+R_n^{2,2}(t,\bar{t},x))$,
where
\begin{eqnarray*}
R_n^{2,1}(t,\bar{t},x)& =& \E \biggl(\int
_0^t \mathrm{d}s \bigl \Vert\bigl[G(\bar t-s,x+
\ast)-G(t-s,x-\ast)\bigr] B\bigl(X_n^-(s,\ast)\bigr)
\bigr \Vert_{\mathcal{H}}^2 1_{L_n(s)} \biggr)^{\trup{p}{2}},
\\
R_n^{2,2}(t,\bar{t},x)& =& \E \biggl(\int
_0^{\bar t-t} \bigl \Vert G(s,x-\ast)B\bigl(X_n^-(s,
\ast)\bigr)\bigr \Vert_{\mathcal{H}}^2 1_{L_n(s)}
\biggr)^{\trup{p}{2}}.
\end{eqnarray*}

The term $R_n^{2,1}(t,\bar{t},x)$ is similar as $R_n^{1,2}(t,\bar
{t},x)$, with $A(X_n)$ replaced by $B(X_n^-)$. Hence both can be
studied using the same approach.
First, we see that the process $D_n(t,x):=B(X_n^-(t,x)) 1_{L_n(t)}$
satisfies the hypothesis of Lemma~\ref{lss3.2.1} with 
$\kappa\in\, \Big]0,\min\big(\gamma,\gamma_1,\gamma_2,\tfrac{\gamma'}{2}\big) \Big[$. 
In fact, this is a consequence of \eqref{s4.18} and Proposition~\ref{pss3.1.2}. Therefore, as for
$R_n^{1,2}(t,\bar{t},x)$, we have
%
\begin{equation}
\label{s3.633} R_n^{2,1}(t,\bar{t},x)\le C |
\bar{t}-t|^{\rho p},
\end{equation}

with $\rho \in\, \Big]0,\min\Big(\gamma_1,\gamma_2,\gamma,\tfrac{\gamma'}{2},\tfrac{\nu+1}{2},
\tfrac{\rho_1+\kappa}{2},\tfrac{\rho_2}{2}\Big) \Big[$.

As for $R_n^{2,2}(t,\bar{t},x)$, it is analogous to $R_n^{1,1}$ with
$A(X_n)$ replaced by $B(X_n^-)$. As in \eqref{s3.60}, we have
%
\begin{equation}
\label{s3.64} R_n^{2,2}(t,\bar{t},x) \le C |
\bar{t}-t|^{p\tfrac{\nu+1}{2}}.
\end{equation}
Consequently, from \eqref{s3.633}, \eqref{s3.64}, we obtain
%
\begin{equation}
\label{s3.640} R_n^2(t,\bar{t},x) \le C |
\bar{t}-t|^{\rho p },
\end{equation}

with $\rho \in\, \Big]0,\min\Big(\gamma_1,\gamma_2,\gamma,\tfrac{\gamma'}{2},\tfrac{\nu+1}{2},
\tfrac{\rho_1+\kappa}{2},\tfrac{\rho_2}{2}\Big) \Big[$.


Let $\hat{B}(X_n(\cdot,\ast))$ be defined by \eqref{s3.351}. Using
Cauchy--Schwarz's inequality and \eqref{s3.101} we have
%
\begin{equation}
\label{s3.641} R_n^3(t,\bar{t},x)\le C
n^{\trup{3p}{2}}2^{n{\trup{p}{2}}} \bigl[R_n^{3,1}(t,
\bar{t},x)+R_n^{3,2}(t,\bar{t},x) \bigr],
\end{equation}
where
\begin{eqnarray*}
R_n^{3,1}(t,\bar{t},x) &=&\E \biggl(\biggl\llvert \int
_0^t \mathrm{d}s \bigl\llVert \bigl[G(
\bar{t}-s,x-\ast )-G(t-s,x-\ast)\bigr] \hat{B}\bigl(X_n(s,\ast)\bigr)
\bigr\rrVert ^2_{\mathcal
{H}}1_{L_n(s)}\biggr\rrvert
\biggr)^{\trup{p}{2}},
\\
R_n^{3,2}(t,\bar{t},x)&=&\E \biggl( \biggl\llvert \int
_0^{\bar{t}-t} \mathrm{d}s\bigl\llVert G(s,x-\ast)
\hat{B}\bigl(X_n(\bar t-s,\ast)\bigr)\bigr\rrVert ^2_{\mathcal{H}}1_{L_n(s)}
\biggr\rrvert \biggr)^{\trup{p}{2}}.
\end{eqnarray*}

From \eqref{s4.19}, it follows that
%
\begin{equation}
\label{s3.65} \sup_{(t,x)\in[0,T\times\IR^3]} \E \bigl( \bigl |\hat {B}
\bigl(X_n(t,x)\bigr)\bigr  |^p1_{L_n(t)} \bigr)\le C
n^{\trup
{3p}{2}} 2^{-np\tfrac{\nu+1}{2}}.
\end{equation}

Let us study $R_n^{3,2}(t,\bar{t},x)$. This term is similar to
$R_n^{1,1}(t,\bar{t},x)$ with $A(X_n)$ replaced here by $\hat
{B}(X_n)$. Hence, as in \eqref{s3.591} we have
%
\begin{eqnarray}
\label{s3.66} R_n^{3,2}(t,\bar{t},x)&\le&  |\bar{t}-t|^{p\tfrac{\nu+1}{2}} \Bigl(\sup
_{(t,x)\in[0,T]\times\IR^3}\E \bigl( \bigl |\hat {B}\bigl(X_n(t,x)\bigr)
\bigr |^p1_{L_n(t)} \bigr) \Bigr)
\nonumber
\\[-8pt]
\\[-8pt]
&\le& C |\bar{t}-t|^{p\tfrac{\nu+1}{2}} n^{\trup
{3p}{2}}2^{-np\tfrac{\nu+1}{2}},
\nonumber
\end{eqnarray}
where in the last inequality we have applied \eqref{s3.65}.

The analysis of $R_n^{3,1}$ relies on a variant of Lemma~\ref
{lss3.2.1} where the process $D_n$ is replaced by $\hat{B}(X_n)$. By
\eqref{s3.65}, this process satisfies a stronger assumption than
\eqref{s3.44}. This fact is expected to compensate the factor
$n^{\trup{3p}{2}}2^{n{\trup{p}{2}}}$ in \eqref{s3.641}.

As in the proof of Lemma~\ref{lss3.2.1}, we consider the decomposition
\[
R_n^{3,1}(t,\bar{t},x)\le\sum
_{k=1}^4 \E \bigl(\bigl |Q^i(t,\bar
{t},x)\bigr |^{{\trup{p}{2}}}1_{L_n(\bar{t})} \bigr),
\]
where $Q^i(t,\bar{t},x)$, $i=1,\ldots,4$, are defined in \eqref{s3.48} and subsequent lines,
with $D_n:=\hat{B}(X_n) 1_{L_n}$.

From \eqref{s3.65} and the triangular inequality, we obtain
%
\begin{equation}
\label{s3.68} \E \biggl(\biggl\llvert \hat{B} \biggl(X_n
\biggl(s,x-\frac{\bar
{t}-s}{t-s}u \biggr) \biggr)-\hat{B}\bigl(X_n(s,x-u)
\bigr) \biggr\rrvert ^p 1_{L_n(s)} \biggr) \le C
n^{\trup{3p}{2}} 2^{-np\tfrac{\nu+1}{2}}.
\end{equation}
Consider the expression \eqref{s3.52-3}  with $D_n=\hat
{B}(X_n)1_{L_n}$. The above estimate \eqref{s3.68} yields
\begin{eqnarray*}
&& \E \bigl( \bigl |Q^1(t,\bar{t},x)\bigr  |^{{\trup{p}{2}}}1_{L_n(\bar
{t})}\bigr)
\\
&&\quad \le C n^{\trup{3p}{2}}2^{-np\tfrac{\nu+1}{2}} \left(\int_0^t
\mathrm{d}s\int_{S^2} \int_{S^2} \bigl(s+h\bigr)^2 
f\bigl([s+h]\xi-[s+h]\eta\bigl) \sigma(d\xi)\sigma(d\eta) \right)^{\tfrac{p}{2}}.
\end{eqnarray*}
This implies
%
\begin{equation}
\label{s3.69} \E \bigl( \bigl |Q^1(t,\bar{t},x)\bigr  |^{{\trup{p}{2}}}1_{L_n(\bar
{t})}
\bigr)\le Cn^{\trup{3p}{2}}2^{-np\tfrac{\nu+1}{2}}.
\end{equation}


Consider the procedure to get the expressions \eqref{s3.52-1} and \eqref{s3.52-2}, with $D_n=\hat{B}(X_n)
1_{L_n}$. Using \eqref{s3.351}, \eqref{s3.65}, \eqref{s3.68} and
\eqref{s4.19}, we obtain
%
\begin{equation}
\label{s3.70} \E \bigl( \bigl |Q^2(t,\bar{t},x)\bigr  |^{{\trup{p}{2}}}1_{L_n(\bar
{t})}
\bigr) \le Cn^{\trup{3p}{2}}2^{-np\tfrac{\nu+1}{2}}.
\end{equation}

Similarly,
%
\begin{equation}
\label{s3.71} \E \bigl( \bigl |Q^3(t,\bar{t},x)\bigr  |^{{\trup{p}{2}}}1_{L_n(\bar
{t})}
\bigr) \le Cn^{\trup{3p}{2}}2^{-np\tfrac{\nu+1}{2}}.
\end{equation}

Let us now consider the procedure to estimate $Q^4$ in Lemma~\ref{lss3.2.1}, with $D_n=\hat
{B}(X_n) 1_{L_n}$. Appealing to \eqref{s3.65}, we obtain
%
\begin{equation}
\label{s3.72} \E \bigl( \bigl |Q^4(t,\bar{t},x)\bigr  |^{{\trup{p}{2}}}1_{L_n(\bar
{t})}
\bigr) \le C n^{\trup{3p}{2}} 2^{-np\tfrac{\nu+1}{2}}.
\end{equation}


From \eqref{s3.69}--\eqref{s3.72} it follows that
%
\begin{equation}
\label{s3.73} R_n^{3,1}(t,\bar t,x) \le
Cn^{\trup{3p}{2}}2^{-np\tfrac{\nu+1}{2}},
\end{equation}
where $C$ is a finite constant.

Set $f_n:=n^{3p}2^{-np\tfrac{\nu+1}{2}}2^{\trup{1}{2}}=n^{3p}2^{-np\tfrac{\nu}{2}} $.
From \eqref{s3.641}, \eqref{s3.66}, \eqref{s3.73}, it follows that\vspace*{-1pt}
%
\begin{equation}
\label{s3.74} R_n^3(t,\bar{t},x)\le C |\bar{t}-t
|^{\rho p}+Cf_n,\qquad \rho \in\, \biggl]0,\frac{\nu+1}{2} \biggr[.
\end{equation}
%

By applying Cauchy--Schwarz's inequality, we see that\vspace*{-1pt}
\[
R_n^4(t,x,\bar{x}) \le C\E \biggl(\int
_0^{\bar t} \mathrm{d}s \bigl\llVert \bigl[G(\bar
t-s,x-\ast )-G(t-s,x-\ast)\bigr] D\bigl(X_n(s,\ast)\bigr)\bigr\rrVert
_{\mathcal
{H}}^21_{L_n(s)} \biggr)^{\trup{p}{2}}.
\]
The last expression is similar as \eqref{s3.571} with the function $A$
replaced by $D$. Therefore, as in \eqref{s3.63} we obtain\vspace*{-1pt}
%
\begin{equation}
\label{s3.75} R_n^4(t,\bar{t},x)\le C |
\bar{t}-t|^{\rho p },
\end{equation}
with $\rho \in\, \Big]0,\min\Big(\gamma_1,\gamma_2,\gamma,\tfrac{\gamma'}{2},\tfrac{\nu+1}{2},
\tfrac{\rho_1+\kappa}{2},\tfrac{\rho_2}{2}\Big) \Big[$.


Finally, we consider $R_n^5(t,\bar{t},x)$. Clearly,\vspace*{-1pt}
\[
R_n^5(t,\bar{t},x)\le C \bigl[R_n^{5,1}(t,
\bar{t},x)+R_n^{5,2}(t,\bar {t},x) \bigr],
\]
where\vspace*{-1pt}
\begin{eqnarray*}
R_n^{5,1}(t,\bar{t},x) &:=&\E \biggl(\biggl\llvert \int
_0^t \int_{\R^3} \bigl[G(
\bar{t}-s,x-\mathrm{d}y)-G(t-s,x-\mathrm{d}y)\bigr] b\bigl(X_n(s,y)
\bigr) \,\mathrm{d}s \biggr\rrvert ^p 1_{L_n(\bar{t})} \biggr),
\\
R_n^{5,2}(t,\bar{t},x)&:=&\E \biggl(\biggl\llvert \int
_t^{\bar{t}} \int_{\R
^3} G(
\bar{t}-s,x-\mathrm{d}y) b\bigl(X_n(s,y)\bigr) \,\mathrm{d}s \biggr
\rrvert ^p1_{L_n(\bar{t})} \biggr).
\end{eqnarray*}

Applying the change of variable, $y\mapsto\frac{y-x}{\overline
{t}-s}+ x $ and $y\mapsto\frac{y-x}{t-s}+x$, we see that\vspace*{-1pt}
\[
R_n^{5,1}(t,\bar{t},x)= \E \bigl( \bigl |T_1(t,
\bar{t},x)-T_2(t,\bar {t},x) \bigr |^p 1_{L_n(\bar{t})} \bigr),
\]
where\vspace*{-1pt}
\begin{eqnarray*}
T_1(t,\bar{t},x)&=&\int_0^t (
\bar{t}-s)\int_{\R^3} G(1,x-\mathrm{d}y) b\bigl(X_n
\bigl(s,(\bar{t}-s) (y-x)+x \bigr)\bigr) \,\mathrm{d}s,
\\
T_2(t,\bar{t},x)&=&\int_0^t (t-s)
\int_{\R^3} G(1,x-\mathrm{d}y) b\bigl(X_n
\bigl(s,(t-s) (y-x)+x\bigr)\bigr)\,\mathrm{d}s.
\end{eqnarray*}
By adding and subtracting $t$ in $T_1$ we get\vspace*{-1pt}
\begin{eqnarray*}
T_1(t,\bar{t},x)&=&\int_0^t (
\bar{t}-t) \int_{\R^3} G(1,x-\mathrm{d}y) b
\bigl(X_n\bigl(s,(\bar{t}-s) (y-x)+x \bigr)\bigr) \,\mathrm{d}s
\\
&&{}+\int_0^t (t-s)\int_{\R^3}
G(1,x-\mathrm{d}y) b\bigl(X_n\bigl(s,(\bar{t}-s) (y-x)+x \bigr)\bigr)
\,\mathrm{d}s.
\end{eqnarray*}
Then, H\"older's inequality yields
\begin{eqnarray*}
&&R_n^{5,1}(t,\bar{t},x)
\\
&&\quad\le  C |\bar{t}-t|^p
\int_0^t \mathrm{d}s\int_{\R^3}
G(1,x-\mathrm{d}y)\E \bigl( \bigl |b \bigl(X_n\bigl(s,(\bar{t}-s) (y-x)+x
\bigr) \bigr) \bigr |^p1_{L_n(s)} \bigr)
\\
 &&\qquad {} + C \int_0^t |t-s|^{p}\,
\mathrm{d}s\int_{\R^3} G(1,x-\mathrm{d}y)\E \bigl( \bigl |b
\bigl(X_n\bigl(s,(\bar{t}-s) (y-x)+x \bigr) \bigr)
\\
&&\phantom{\qquad {} + C \int_0^t |t-s|^{p}\,
\mathrm{d}s\int_{\R^3} G(1,x-\mathrm{d}y)\E \bigl( \bigl |} {} -b
\bigl(X_n\bigl(s,(t-s) (y-x)+x \bigr) \bigr) \bigr |^p
1_{L_n(s)} \bigr).
\end{eqnarray*}

Owing to \eqref{s4.18}, the first term on the right hand-side of the
last inequality is bounded up to a constant by $|\bar{t}-t|^p$. For
the second one, we use the Hypothesis~\ref{HypB} along with
\eqref{s3.17} to obtain
\begin{eqnarray*}
&&\int_0^t |t-s|^{p} \,
\mathrm{d}s\int_{\R^3} G(1,x-\mathrm{d}y)
\\
&&\qquad{} \times\E \bigl( \bigl |b \bigl(X_n\bigl(s,(\bar{t}-s) (y-x)+x
\bigr) \bigr)-b \bigl(X_n\bigl(s,(t-s) (y-x)+x \bigr) \bigr)
\bigr |^p 1_{L_n(s)} \bigr)
\\
&&\quad \le C \int_0^t \mathrm{d}s \int
_{\R^3} G(1,x-\mathrm{d}y)
\\
&&\qquad{} \times\E \bigl( \bigl |X_n\bigl(s,(\bar{t}-s) (y-x)+x
\bigr)-X_n\bigl(s,(t-s) (y-x)+x \bigr)\bigr  |^p
1_{L_n(s)} \bigr)
\\
&&\quad \le C |t-\bar t|^{\rho p},
\end{eqnarray*}
with $\rho\in\, \Big]0,\min\big(\gamma,\gamma_1,\gamma_2,\tfrac{\gamma'}{2}\big) \Big[$.

H\"older inequality along with \eqref{s4.18} clearly yields
%
\begin{eqnarray}
\label{s3.56} R_n^{5,2}(t,\bar{t},x)&\le& C|
\bar{t}-t|^{p-1} \int_t^{\bar{t}} \int
_{\R^3} G(\bar{t}-s,x-\mathrm{d}y) \E \bigl(\bigl\llvert b
\bigl(X_n(s,y)\bigr) \bigr\rrvert ^p1_{L_n(s)}
\bigr)\,\mathrm{d}s
\nonumber
\\[-8pt]
\\[-8pt]
&\le& C|\bar{t}-t|^{p}.
\nonumber
\end{eqnarray}

Hence, we have proved that
%
\begin{equation}
\label{s3.57} R_n^5(t,\bar{t},x)\le C |\bar{t}-t
|^{\rho p},
\end{equation}
where $\rho\in\, \Big]0,\min\big(\gamma,\gamma_1,\gamma_2,\tfrac{\gamma'}{2}\big) \Big[$.

With the inequalities \eqref{s3.63}, \eqref{s3.640}, \eqref{s3.74},
\eqref{s3.75} and \eqref{s3.57}, we have
\[
\E \bigl(\bigl |X_n(\bar{t},x)-X_n(t,x)\bigr |^p1_{L_n(\bar{t})}
\bigr)\le C \Bigl[|\bar{t}-t|^{\rho p } + f_n \Bigr],
\]
with $\rho \in\, \Big]0,\min\Big(\gamma_1,\gamma_2,\gamma,\tfrac{\gamma'}{2},\tfrac{\nu+1}{2},
\tfrac{\rho_1+\kappa}{2},\tfrac{\rho_2}{2}\Big) \Big[$.

For a given fixed $\bar t\in[t_0,T]$, we introduce the function
\[
\varPsi_{n,x,p}^{\bar{t}}(t):=\E \bigl(\bigl |X_n(\bar
{t},x)-X_n(t,x)\bigr |^p1_{L_n(\bar{t})} \bigr),
\]
for $t_0\le t\le\bar{t}$.\vadjust{\goodbreak}

Notice that $\lim_{n\to\infty} f_n=0$ and thus, $\sup_n f_n \le C$.
Thus, there exists
a constant $0<C_0<\infty$, such that
\[
\sup_n f_n\le C_0 t_0
\le C_0\bar{t}\le C_0 \int_0^{\bar{t}}
\mathrm{d}s \bigl[1+\varPsi_{n,x,p}^{\bar{t}}(s) \bigr].
\]
With a similar argument, there exists $0<C_1<\infty$ such that
\[
1\le C_1 t_0 \le C_1\bar{t}\le
C_1 \int_0^{\bar{t}} \mathrm{d}s
\bigl[1+\varPsi_{n,x,p}^{\bar{t}}(s) \bigr].
\]
Therefore,
\[
1+ \varPsi_{n,x,p}^{\bar{t}}(t) \le C \biggl\{|
\bar{t}-t|^{\rho p} + \int_0^{\bar{t}}
\mathrm{d}s \bigl[1+\varPsi_{n,x,p}^{\bar{t}}(s) \bigr] \biggr\}.
\]
Then, by Gronwall's lemma,
\[
1+ \varPsi_{n,x,p}^{\bar{t}}(t) \le C \bigl(|\bar{t}-t|^{\rho p}
\bigr),
\]
where $\rho \in\, \Big]0,\min\Big(\gamma_1,\gamma_2,\gamma,\tfrac{\gamma'}{2},\tfrac{\nu+1}{2},
\tfrac{\rho_1+\kappa}{2},\tfrac{\rho_2}{2}\Big) \Big[$. This finish the
proof of the proposition.
\end{proof}


\subsection{Pointwise convergence}
\label{ss3.3}
For the sake of completeness we will sketch the proof of the Theorem~\ref
{ts3.3}. The proofs is very similar to the Theorem 2.4 in \cite{Delgado--Sanz-Sole012} just with small changes.
The main difference is the bound used, in \cite{Delgado--Sanz-Sole012} they used the bound $2^{-np(3-\beta)/2}$ while here
we use $2^{-np\tfrac{\nu+1}{2}}$ (see \eqref{s4.4} and \eqref{s4.19}). 
 
Using equations \eqref{s3.7}, \eqref{s3.6}, we write the
difference $X_n(t,x)-X(t,x)$ grouped into comparable terms
in order to prove their convergence to zero.

Notice that the initial conditions vanish so we could do the following decomposition.

\[
X_n(t,x)-X(t,x)=\sum_{i=1}^8
U_n^i(t,x),
\]
where
\begin{eqnarray*}
U_n^1(t,x) &=& \int_0^t
\int_{\R^3} G(t-s,x-y) \bigl[(A+B) \bigl(X_n(s,y)
\bigr)-(A+B) \bigl(X(s,y)\bigr) \bigr] M(\mathrm{d}s,\mathrm{d}y),
\\
U_n^2(t,x) &=& \bigl\langle G(t-\cdot,x-\ast)\bigl[D
\bigl(X_n(\cdot,\ast)\bigr)-D\bigl(X(\cdot ,\ast)\bigr)\bigr], h \bigr
\rangle_{\mathcal{H}_t},
\\
U_n^3(t,x) &=& \int_0^t
\mathrm{d}s\int_{\R^3} G(t-s,x-\mathrm{d}y)\bigl[b
\bigl(X_n(s,y)\bigr)-b\bigl(X(s,y)\bigr)\bigr],
\\
U_n^4(t,x) &=& \bigl\langle G(t-\cdot,x-\ast)\bigl[B
\bigl(X_n(\cdot,\ast )\bigr)-B\bigl(X_n^-(\cdot,\ast)
\bigr)\bigr], w^n \bigr\rangle_{\mathcal{H}_t},
\\
U_n^5(t,x) &=& \bigl\langle G(t-\cdot,x-\ast)\bigl[B
\bigl(X_n^-(\cdot,\ast )\bigr)-B\bigl(X^-(\cdot,\ast)\bigr)\bigr],
w^n \bigr\rangle_{\mathcal{H}_t},
\\
U_n^6(t,x) &=& \bigl\langle G(t-\cdot,x-\ast)B
\bigl(X^-(\cdot,\ast)\bigr), w^n \bigr\rangle_{\mathcal{H}_t}
\\
&&{} - \int_0^t\int_{\R^3}
G(t-s,x-y)B\bigl(X^-(s,y)\bigr) M(\mathrm{d}s,\mathrm{d}y),
\\
U_n^7(t,x) &=& \int_0^t
\int_{\R^3} G(t-s,x-y)\bigl[B\bigl(X^-(s,y)\bigr)-B
\bigl(X_n^-(s,y)\bigr)\bigr] M(\mathrm{d}s,\mathrm{d}y),
\\
U_n^8(t,x) &=& \int_0^t
\int_{\R^3} G(t-s,x-y)\bigl[B\bigl(X_n^-(s,y)
\bigr)-B\bigl(X_n(s,y)\bigr)\bigr] M(\mathrm{d}s,\mathrm{d}y).
\end{eqnarray*}
Here, we have used the abridged notation $X^-(\cdot,\ast)$ for the
stochastic process $X^-(t,x):=X(t,t_n,x)$ defined in \eqref{s3.8.3}.
Notice that, although this is not apparent in the notation $X^-(\cdot
,\ast)$ does depend on $n$.

Fix $p\in[2,\infty[$ we get
\[
\E \bigl(\bigl\llvert X_n(t,x)-X(t,x)\bigr\rrvert
^p1_{L_n(t)} \bigr)\le C\sum_{i=1}^8
\E \bigl(\bigl\llvert U_n^i(t,x)\bigr\rrvert
^p1_{L_n(t)} \bigr).
\]
Next, we analyze the contribution of each term $U_n^i(t,x)$,
$i=1,\ldots,8$. By following the procedure in \cite{Delgado--Sanz-Sole012} we have,

%
\begin{eqnarray}
\label{s3.76} \E \bigl(\bigl\llvert U_n^1(t,x)\bigr
\rrvert ^p1_{L_n(t)} \bigr)&&+\E \bigl(\bigl\llvert U_n^2(t,x)\bigr
\rrvert ^p1_{L_n(t)} \bigr)+\E \bigl(\bigl\llvert U_n^3(t,x)\bigr
\rrvert ^p1_{L_n(t)} \bigr)
\nonumber
\\
&&\le C \int_0^t
\mathrm{d}s \Bigl[\sup_{y \in K(s)} \E \bigl( \bigl |X_n(s,y)-X(s,y)
\bigr |^{p} 1_{L_n(s)} \bigr) \Bigr].
\end{eqnarray}
For $U_n^5(t,x),U_n^7(t,x)$ we follow the same procedure as in \cite{Delgado--Sanz-Sole012} and we arrive to 
\begin{eqnarray*}
\E \bigl[ \bigl(\bigl\llvert U_n^5(t,x) +
U_n^7(t,x)\bigr\rrvert ^p
\bigr)1_{L_n(t)} \bigr] &\le& C \biggl\{\int_0^t
\mathrm{d}s \Bigl[ \sup_{y\in K(s)}\E \bigl( \bigl |X_n^-(s,y)-X_n(s,y)
\bigr |^p 1_{L_n(s)} \bigr) \Bigr]
\\
&&\phantom{C \biggl\{} {}   +\int_0^t
\mathrm{d}s \Bigl[ \sup_{y\in K(s)}\E \bigl( \bigl |X_n(s,y)-X(s,y)
\bigr |^p 1_{L_n(s)} \bigr) \Bigr]
\\
&&\phantom{C \biggl\{} {}   + \int_0^t
\mathrm{d}s \Bigl[ \sup_{y\in K(s)}\E \bigl( \bigl |X(s,y)-X^-(s,y)
\bigr |^p 1_{L_n(s)} \bigr) \Bigr] \biggr\}.
\end{eqnarray*}
Recall that $X^-(s,y)=X(s,s_n,y)$. By applying \eqref{s4.4} and \eqref
{s4.19}, we obtain
%
\begin{eqnarray}
\label{s3.80} \E \bigl[ \bigl(\bigl\llvert U_n^5(t,x)
+ U_n^7(t,x)\bigr\rrvert ^p
\bigr)1_{L_n(t)} \bigr] &\le& C \int_0^T
\mathrm{d}s \Bigl[ \sup_{y\in K(s)}\E \bigl( \bigl |X_n(s,y)-X(s,y)
\bigr |^p 1_{L_n(s)} \bigr) \Bigr]\qquad
\nonumber
\\[-8pt]
\\[-8pt]
&&{} + Cn^{\trup{3p}{2}} 2^{-np\tfrac{\nu+1}{2}}.
\nonumber
\end{eqnarray}

Next, we will see that
%
\begin{equation}
\label{s3.81} \lim_{n\to\infty} \Bigl(\sup_{t\in[0,T]}
\sup_{x\in K(t)}\E \bigl(\bigl\llvert U_n^i(t,x)
\bigr\rrvert ^p1_{L_n(t)} \bigr) \Bigr)=0,\qquad i=4,6,8.
\end{equation}

Consider $i=4$. Cauchy--Schwarz' inequality along with \eqref{s3.101} implies
\begin{eqnarray*}
&&\E \bigl(\bigl\llvert U_n^4(t,x)\bigr\rrvert
^p1_{L_n(t)} \bigr)
\\
&&\quad\le Cn^{\trup{3p}{2}}2^{n{\trup{p}{2}}} \E
\biggl(\int_0^t \mathrm{d}s\bigl \| G(t-s,x-\ast)
\bigl[B(X_n)-B\bigl(X_n^-\bigr)\bigr](s,
\ast)1_{L_n(s)} \bigr \|_{\mathcal{H}}^2 \biggr)^{{\trup{p}{2}}}.
\end{eqnarray*}
Then, the Lipschitz continuity of $B$ and \eqref{s4.19} yield
\begin{eqnarray*}
\E \bigl(\bigl\llvert U_n^4(t,x)\bigr\rrvert
^p1_{L_n(t)} \bigr) &\le& Cn^{\trup{3p}{2}}2^{n{\trup{p}{2}}}
\int_0^t \mathrm{d}s \Bigl[ \sup
_{y\in\IR^3}\E \bigl( \bigl |X_n(s,y)-X_n^-(s,y)
\bigr |^p 1_{L_n(s)} \bigr) \Bigr]
\\
&\le& C n^{3p} 2^{-np\tfrac{\nu+1-1}{2}}.\\
&\le& C n^{3p} 2^{-np\tfrac{\nu}{2}}.
\end{eqnarray*}
Since $\nu\in\,]0,1]$, this implies \eqref{s3.81} for $i=4$.

The arguments based on Burkholder's and H\"older's inequalities,
already applied many times, give\vspace*{-2pt}
\begin{eqnarray*}
\E \bigl(\bigl\llvert U_n^8(t,x)\bigr\rrvert
^p1_{L_n(t)} \bigr) &\le& C\int_0^t
\mathrm{d}s \sup_{y\in\IR^3} \E \bigl( \bigl |X_n^-(s,y)-X_n(s,y)
\bigr |^{p} 1_{L_n(s)} \bigr)
\\
&\le& C n^{\trup{3p}{2}} 2^{-np\tfrac{\nu+1}{2}},
\end{eqnarray*}
where, in the last inequality we have used \eqref{s4.19}. Thus, \eqref
{s3.81} holds for $i=8$.


Let us now consider the case $i=6$. Define\vspace*{-2pt}
\begin{eqnarray*}
U_n^{6,1}(t,x) &=&\int_0^t
\int_{\R^3} \bigl\{\pi_n \bigl[
\tau_n \bigl[G(t-\cdot ,x-\ast)B\bigl(X^-(\cdot,\ast)\bigr) \bigr]
\\
&&\phantom{\int_0^t
\int_{\R^3} \bigl\{\pi_n \bigl[}{}  -G(t-\cdot,x-\ast)\tau_n \bigl[B\bigl(X^-(\cdot,
\ast )\bigr) \bigr] \bigr](s,y) \bigr\} M(\mathrm{d}s,\mathrm{d}y),
\\
U_n^{6,2}(t,x) &=& \int_0^t
\int_{\R^3} \pi_n \bigl[G(t-\cdot,x-\ast)
\tau_n \bigl[B\bigl(X^-(\cdot,\ast)\bigr) \bigr]
\\[-1pt]
&&\phantom{\int_0^t
\int_{\R^3} \pi_n \bigl[}{}   - G(t-\cdot,x-\ast)B\bigl(X^-(\cdot,\ast)\bigr) \bigr](s,y)M(
\mathrm{d}s,\mathrm{d}y),
\\[-1pt]
U_n^{6,3}(t,x) & =& \int_0^t
\int_{\R^3} \bigl\{ \pi_n \bigl[G(t-\cdot,x-\ast
)B\bigl(X^-(\cdot,\ast)\bigr) \bigr]
\\[-1pt]
&&\phantom{\int_0^t
\int_{\R^3} \bigl\{}{}   -G(t-s,x-y)B\bigl(X^-(s,y)\bigr) \bigr\} M(\mathrm{d}s,
\mathrm{d}y).
\end{eqnarray*}
Clearly,\vspace*{-2pt}
\[
U_n^6(t,x)=U_n^{6,1}(t,x)+U_n^{6,2}(t,x)+U_n^{6,3}(t,x).
\]

In a rather similar way to \cite{Delgado--Sanz-Sole012} we establish
\begin{eqnarray*}
\E\bigl(|U_n^{6,1}(t,x)|^p 1_{L_n(t)}\bigr) &\le &
 C g_n.
\end{eqnarray*}
where $g_n$ is a bounded sequence that converges to zero.
Thus, we have established the convergence
%
\begin{equation}
\label{s3.85} \lim_{n\to\infty} \sup_{(t,x)\in[0,T]\times K(t)} \E
\bigl(\bigl\llvert U_n^{6,1}(t,x)\bigr\rrvert
^p1_{L_n(t)} \bigr)=0.
\end{equation}
Next, we consider the term $U_n^{6,2}(t,x)$. As usually for these type
of terms, we apply Burkholder's and then H\"older's inequalities, along
with the contraction property of the projection\vadjust{\goodbreak} $\pi_n$. This yields,
\begin{eqnarray*}
&& \E \bigl(\bigl\llvert U_n^{6,2}(t,x)\bigr\rrvert
^p 1_{L_n(t)} \bigr)
\\
&&\quad =\E \biggl(\biggl\llvert \int_0^t\int
_{\R^3}\pi_n \bigl[G(t-\cdot ,x-\ast) \bigl\{B
\bigl(X^-\bigl( \bigl(\cdot+2^{-n}\bigr)\wedge t,\ast\bigr)\bigr) - B
\bigl(X^-(\cdot,\ast )\bigr) \bigr\} \bigr](s,y)
\\
&&\phantom{\quad =\E \biggl(\biggl\llvert}{}   \times M(\mathrm{d}s,\mathrm{d}y)\biggr\rrvert
^p1_{L_n(t)} \biggr)
\\
&&\quad \le C \int_0^t \sup
_{x\in\IR^3}\E \bigl(\bigl\llvert X\bigl(\bigl(s+2^{-n}
\bigr)\wedge t,\bigl(s_n+2^{-n}\bigr)\wedge t,x
\bigr)-X(s,s_n,x)\bigr\rrvert \bigr)^p.
\end{eqnarray*}
Equation \eqref{s3.7} is a particular case of equation \eqref{s3.6}.
Therefore, Proposition~\ref{pss3.2.1} also holds with $X_n$ replaced
by $X$. Then,
by virtue of \eqref{s4.4} and \eqref{s3.43}, this is bounded up to a
constant by $2^{-np \tfrac{1+\nu}{2}}+2^{-np\rho}$, for some $\rho>0$
Consequently,
%
\begin{equation}
\label{s3.86} \lim_{n\to\infty} \sup_{(t,x)\in[0,T]\times\R^3} \E
\bigl(\bigl\llvert U_n^{6,2}(t,x)\bigr\rrvert
^p1_{L_n(t)} \bigr)=0.
\end{equation}


For $U_n^{6,3}(t,x)$, after having applied Burkholder's inequatily we have
\[
\E \bigl( \bigl | U_n^{6,3}(t,x) \bigr |^p
1_{L_n(t)} \bigr)\le C\E \bigl(\bigl\llVert (\pi_n-I_{\mathcal{H}_t}
) \bigl[G(t-\cdot,x-\ast)B\bigl(X^-(\cdot,\ast)\bigr) \bigr] 1_{L_n(\cdot)}
\bigr\rrVert _{\mathcal{H}_t}^p \bigr).
\]
We want to prove that the right-hand side of this inequality tends to
zero as $n\to\infty$, uniformly in $(t,x)\in[t_0,T]\times K(t)$. 
By using a similar approach as in \cite{milletss2}, pages 906--909.

Set
\[
\tilde Z_n(t,x)= \bigl\llVert (\pi_n-I_{\mathcal{H}_t}
) \bigl[G(t-\cdot,x-\ast)B\bigl(X^-(\cdot,\ast)\bigr) \bigr] 1_{L_n(\cdot)}
\bigr\rrVert _{\mathcal{H}_t}.
\]
Since $\pi_n$ is a projection on the Hilbert space $\mathcal{H}_t$,
the sequence $\{\tilde Z_n(t,x), n\ge1\}$ decreases to zero as $n\to
\infty$.
Assume that
%
\begin{equation}
\label{s3.87} \E \Bigl(\sup_n\bigl\llVert G(t-\cdot,x-
\ast) B\bigl(X^-(\cdot,\ast)\bigr) 1_{L_n(\cdot)} \bigr\rrVert
_{\mathcal{H}_t}^p \Bigr) <\infty.
\end{equation}
Remember that $X^-(s,y)$ stands for $X(s,s_n,y)$, defined in \eqref
{s3.8.3}, and therefore it depends on~$n$.
Then, by bounded convergence, this would imply $\lim_{n\to\infty}\E
(\tilde Z_n(t,x) )^p=0$.
Set $Z_n(t,x)=\E (\tilde Z_n(t,x) )^p$. Proceeding as in
the proof of Lemmas~\ref{lss3.1.1}, \ref{lss3.2.1}, we can check that\vspace*{-1pt}
\[
\bigl\llvert \bigl(Z_n(t,x)\bigr)^{\trup{1}{p}}-
\bigl(Z_n(\bar t,\bar x)\bigr)^{\trup
{1}{p}}\bigr\rrvert \le C \bigl(
\vert t-\bar t\vert+\vert x-\bar x\vert \bigr)^\rho,
\]
for some $\rho>0$.

Hence, $(Z_n)_n$ is a sequence of monotonically decreasing continuous
functions defined on
$[0,T]\times\IR^3$ which converges pointwise to zero. Appealing to
Dini's theorem, we obtain
%
\begin{equation}
\label{s3.88} \lim_{n\to\infty}\sup_{(t,x)\in[t_0,T]\times K(t)}\E
\bigl(\tilde Z_n(t,x) \bigr)^p=0.
\end{equation}
This yields the expected result on $U_n^{6,3}$.\vadjust{\goodbreak}

To prove \eqref{s3.87} just follow the procedure in \cite[page 2205]{Delgado--Sanz-Sole012}.

In order to conclude the proof, let us consider the estimates 
\eqref{s3.76}, \eqref{s3.80}, along with
\eqref{s3.81}. We see that
\[
\E \bigl(\bigl\llvert X_n(t,x)-X(t,x)\bigr\rrvert
^p1_{L_n(t)} \bigr)\le C_1 \theta_n +
C_2 \int_0^t \mathrm{d}s \Bigl[
\sup_{x\in K(s) }\E \bigl(\bigl\llvert X_n(s,x)-X(s,x)
\bigr\rrvert ^p 1_{L_n(s)} \bigr) \Bigr],
\]
where $(\theta_n,n\ge1)$ is a sequence of real numbers which
converges to zero as $n\to\infty$. Applying Gronwall's lemma, we
finish the proof of the theorem.
\qed


\subsection{Proof of Theorem \texorpdfstring{\protect\ref{ts3.1}}{2.2}}
\label{ss3.4}

Fix $t_0>0$ and a compact set $K\subset\IR^3$.
Let $Y_n(t,x):= X_n(t,x)-X(t,x)$ and $B_n(t):= L_n(t)$, $n\ge1$,
$(t,x)\in[t_0,T]\times K$, $p\in[1,\infty[$. From Theorems~\ref
{ts3.2} and~\ref{ts3.3}, we see that the conditions (P1) and (P2) of
Lemma~\ref{ls5.2} are satisfied with $\delta=p\rho-4$, for any $\rho \in\, 
\Big]0,\min\Big(\gamma_1,\gamma_2,\gamma,\tfrac{\gamma'}{2},\tfrac{\nu+1}{2},
\tfrac{\rho_1+\kappa}{2},\tfrac{\rho_2}{2}\Big) \Big[$, where 
$\kappa \in\, \Big]0,\min\Big(\gamma_1,\gamma_2,\gamma,\tfrac{\gamma'}{2}\Big) \Big[$.
We infer that
%
\begin{equation}
\label{s5.2} \lim_{n\to\infty}\E \bigl(\llVert X_n-X
\rrVert ^p_{\rho
,t_0,K}1_{L_n(t)} \bigr)=0,
\end{equation}
for any $p\in[1,\infty[$ and $\rho \in\, \Big]0,\min\Big(\gamma_1,\gamma_2,\gamma,\tfrac{\gamma'}{2},\tfrac{\nu+1}{2},
\tfrac{\rho_1+\kappa}{2},\tfrac{\rho_2}{2}\Big) \Big[$.

Fix $\epsilon>0$. Since $\lim_{n\to\infty}\Pb(L_n(t)^c)=0$, there
exists $N_0\in\IN$ such that for all $n\ge N_0$,
$\Pb(L_n(t)^c)<\epsilon$.  Then,
for any $\lambda>0$ and $n\ge N_0$,
\begin{eqnarray*}
\Pb \bigl(\|X_n-X\|_{\rho,t_0,K}>\lambda \bigr)&\le&\epsilon+ \Pb \bigl( \bigl(
\|X_n-X\|_{\rho,t_0K}>\lambda \bigr)\cap L_n(t) \bigr)
\\
&\le& \epsilon+ \lambda^{-p} \E \bigl(\|X_n-X
\|_{\rho
,t_0K}^p1_{L_n(t)} \bigr).
\end{eqnarray*}
Since $\epsilon>0$ is arbitrary, this finishes the proof of the theorem.
\qed


\section{Support theorem}
\label{sm}

This section is devoted to the characterization of the topological
support of the law of the random field solution to the stochastic wave
equation \eqref{s1.8}. As has been explained in the \hyperref[s1]{Introduction}, this
is a corollary of Theorem~\ref{ts3.1}.
%
\begin{theorem}
\label{tsm.1}
Assume that the functions $\varsigma$ and $b$ are Lipschitz continuous.
Fix $t_0\in\,]0,T[$ and a compact set $K\subset\IR^3$. Let $u=\{
u(t,x), (t,x)\in[t_0,T]\times K\}$ be the random field solution to
\eqref{s1.8}. Fix
$\rho \in\, \Big]0,\min\Big(\gamma_1,\gamma_2,\gamma,\tfrac{\gamma'}{2},\tfrac{\nu+1}{2},
\tfrac{\rho_1+\kappa}{2},\tfrac{\rho_2}{2}\Big) \Big[$, where 
$\kappa \in\, \Big]0,\min\Big(\gamma_1,\gamma_2,\gamma,\tfrac{\gamma'}{2}\Big) \Big[$. Then the topological
support of the law of $u$ in the space $\mathcal{C}^\rho
([t_0,T]\times K)$ is the closure in
$\mathcal{C}^\rho([t_0,T]\times K)$ of the set of functions $\{\Phi
^h, h\in\mathcal{H}_T\}$, where $\{\Phi^h(t,x), (t,x)\in
[t_0,T]\times K\}$ is the solution of \eqref{sm.h}.
\end{theorem}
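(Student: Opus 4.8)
The plan is to deduce the support characterization from the abstract approximation scheme recalled in the Introduction (following \cite{millet-ss94a,millet-ss94b,milletss2}), specialized to $\xi_1(h)=\xi_2(h)=\Phi^h$ and to the transformations $T_n^h(\omega)=\omega+h-w^n$ of \eqref{sm.1}. Since Girsanov's theorem already guarantees that $\Pb\circ(T_n^h)^{-1}$ is absolutely continuous with respect to $\Pb$, the two inclusions that together give the claimed equality of supports reduce, via \eqref{s1.18} and \eqref{s1.19}, to the two convergences in probability in the H\"older norm $\|\cdot\|_{\rho,t_0,K}$,
\[
\lim_{n\to\infty}\Phi^{w^n}=u \qquad\text{and}\qquad \lim_{n\to\infty} u\bigl(T_n^h\bigr)=\Phi^h,
\]
the first for the inclusion $\operatorname{supp}(u\circ\Pb^{-1})\subset\overline{\{\Phi^h\}}$ and the second, for each fixed $h\in\mathcal{H}_T$, for the reverse inclusion. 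Both convergences will be produced as instances of Theorem~\ref{ts3.1}, by suitable choices of the coefficients $A,B,D$ in the general equations \eqref{s3.7}--\eqref{s3.6}; as all the choices below are globally Lipschitz whenever $\varsigma$ is, Hypothesis~\ref{HypB} holds and Theorem~\ref{ts3.1} applies.

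For the first convergence I would take $A\equiv0$, $B=\varsigma$, $D\equiv0$ and $h=0$. With these choices the limit equation \eqref{s3.7} collapses to \eqref{s1.8}, so $X=u$, while the approximating equation \eqref{s3.6} becomes precisely the defining equation \eqref{sm.h} evaluated at $w^n$, whence $X_n=\Phi^{w^n}$ by uniqueness (Theorem~\ref{ts5.1}). Theorem~\ref{ts3.1} then yields $\|\Phi^{w^n}-u\|_{\rho,t_0,K}\to0$ in probability, which is exactly \eqref{s1.18}.

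For the second convergence I would take $A=\varsigma$, $B=-\varsigma$ and $D=\varsigma$, keeping the original $h$. Then $A+B\equiv0$ annihilates the stochastic integral in \eqref{s3.7}, which reduces to the deterministic equation \eqref{sm.h}, so $X=\Phi^h$. The essential point is the identification $X_n=u(T_n^h)$: inserting the Cameron--Martin shift $h-w^n$ into the stochastic integral of \eqref{s1.8} produces, through the translation of the Brownian increments, an extra drift $\langle G(t-\cdot,x-\ast)\varsigma(u(T_n^h)),h-w^n\rangle_{\mathcal{H}_t}$, which is exactly the sum of the two pathwise terms $\langle G\,D(X_n),h\rangle_{\mathcal{H}_t}$ and $\langle G\,B(X_n),w^n\rangle_{\mathcal{H}_t}$ of \eqref{s3.6} for the chosen $B=-\varsigma$, $D=\varsigma$; uniqueness then forces $X_n=u(T_n^h)$ a.s. Applying Theorem~\ref{ts3.1} gives $\|u(T_n^h)-\Phi^h\|_{\rho,t_0,K}\to0$ in probability, so that $\Pb\{\|u(T_n^h)-\Phi^h\|_{\rho,t_0,K}<\epsilon\}\to1>0$, which is \eqref{s1.19}.

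The step I expect to be the main obstacle is the rigorous justification of the identity $X_n=u(T_n^h)$. Because $w^n$ carries the increments $W_j(\Delta_i)$ placed on the shifted intervals $\Delta_{i+1}$, it fails to be adapted in the usual sense, so the naive substitution of the shift into the Walsh--Dalang integral is not legitimate; this non-adaptedness is precisely why the auxiliary equations $X_n^-$ and $X(t,t_n,\cdot)$ of \eqref{s3.8.2}--\eqref{s3.8.3} are introduced. The shift computation must therefore be carried out at the level of the projected, piecewise-constant regularizations and combined with the Girsanov change of measure, exactly as in \cite{millet-ss94a} and \cite{milletss2}. Once this identification is secured, the conclusion is a direct reading of Theorem~\ref{ts3.1} through the abstract scheme.
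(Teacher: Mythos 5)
Your proposal is correct and follows essentially the same route as the paper: the same reduction of the support characterization to the two convergences \eqref{sm.3}--\eqref{sm.4} via the scheme of \cite{millet-ss94a}, and the identical coefficient choices ($A=D=0$, $B=\varsigma$ for $\Phi^{w^n}\to u$; $A=D=\varsigma$, $B=-\varsigma$ for $u(T_n^h)\to\Phi^h$) plugged into Theorem~\ref{ts3.1}. The identification $X_n=u\circ T_n^h$ that you single out as the delicate step is precisely what the paper records as equation \eqref{sm.2}, stated there without further justification, so your added caution is compatible with (indeed slightly more careful than) the paper's own treatment.
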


Let $\{w^n, n\ge1\}$ be the sequence of $\mathcal{H}_T$-valued random
variables defined in \eqref{s3.3}. For any $h\in\mathcal{H}_T$, we
consider the sequence of transformations of $\Omega$ defined in \eqref
{sm.1}. As has been pointed out in Section~\ref{s1},
$P\circ(T_n^h)^{-1}\ll P$.

Notice also that the process $v_n(t,x):=(u\circ T_n^h)(t,x)$, $(t,x)\in
[t_0,T]\times\IR^3$, satisfies the equation
%
\begin{eqnarray}
\label{sm.2} v_n(t,x)&=&\int_0^t
\int_{\IR^3}G(t-s,x-y)\varsigma \bigl(v_n(s,y)\bigr)M(
\mathrm{d}s,\mathrm{d}y)
\nonumber
\\[-8pt]
\\[-8pt]
&&{}+ \bigl\langle G(t-\cdot,x-\ast)\varsigma\bigl(v_n(\cdot,\ast )
\bigr),h-w^n \bigr\rangle_{\mathcal{H}_t} +\int_0^t
\mathrm{d}s \bigl[G(t-s,\cdot)\star b\bigl(v_n(s,\cdot)\bigr)
\bigr](x).\quad\ \
\nonumber
\end{eqnarray}

\begin{proof}{of Theorem~\ref{tsm.1}}
According to the method developed in \cite{millet-ss94a} (see also
\cite{Ba-Mi-SS} and Section 1 in \cite{Delgado--Sanz-Sole012} for a summary), the theorem will
be a consequence of the following convergences:
%
\begin{eqnarray}
\label{sm.3}\lim_{n\to\infty}\Pb \bigl\{\bigl\llVert u-
\Phi^{w^n}\bigr\rrVert _{\rho,t_0,K}>\eta \bigr\}&=&0,
\\
\label{sm.4}\lim_{n\to\infty}\Pb \bigl\{\bigl\llVert u\circ
T_n^h-\Phi^h\bigr\rrVert _{\rho,t_0,K}>
\eta \bigr\}&=&0,
\end{eqnarray}
where $\eta$ is an arbitrary positive real number.\vadjust{\goodbreak}

This follows from the general approximation result developed in Section~\ref{s3}.
Indeed, consider equations \eqref{s3.7} and \eqref{s3.6}
with the choice of coefficients $A=D=0$, $B=\varsigma$. Then the
processes $X$ and $X_n$ coincide with $u$ and $\Phi^{w^n}$, respectively.
Hence, the convergence \eqref{sm.3} follows from Theorem~\ref{ts3.1}.
Next, we consider again equations \eqref{s3.7} and \eqref{s3.6} with
a new choice of coefficients: $A=D=\varsigma$, $B=-\varsigma$. In this
case, the processes $X$ and $X_n$ are equal to $\Phi^h$ and
$v_n:=u\circ T_n^h$, respectively. Thus, Theorem~\ref{ts3.1} yields
\eqref{sm.4}.\vspace*{1pt}
\end{proof}


\section{Auxiliary results}
\label{s4}

The most difficult part in the proof of Theorem~\ref{ts3.1} consists
of establishing \eqref{s3.15}. In particular, handling the
contribution of the pathwise integral (with respect to $w^n$) requires
a careful analysis of the discrepancy between this integral and the
stochastic integral with respect to $M$. This section gathers several
technical results that have been applied in the analysis of such
questions in the preceding Section~\ref{s3}.

The first statement in the next lemma provides a measure of the
discrepancy between the processes $X(t,x)$ and $X(t,t_n,x)$ defined in
\eqref{s3.7}, \eqref{s3.8.3}, respectively.\vspace*{1pt}
%
\begin{lemma}
\label{ls4.1}
Suppose that Hypothesis~\textup{\ref{HypB}} is satisfied. Then for any $p\in[1,\infty
)$ and every integer $n\ge1$,\vspace*{1pt}
%
\begin{equation}
\label{s4.4} \sup_{(t,x)\in[0,T]\times\R^3} \bigl \|X(t,x)-X(t,t_n,x)
\bigr \|_p\le C2^{-np\tfrac{\nu+1}{2}}
\end{equation}
and\vspace*{1pt}
%
\begin{equation}
\label{s4.5} \sup_{n\ge1}\sup_{(t,x)\in[0,T]\times\R^3}
\bigl \|X(t,t_n,x)\bigr \|_p\le C<\infty,
\end{equation}
where $C$ is a positive constant not depending on $n$.
\end{lemma}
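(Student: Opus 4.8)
The plan is to observe that $X(t,x)-X(t,t_n,x)$ is simply the collection of integral terms in \eqref{s3.7} restricted to the time interval $(t_n,t]$:
\begin{eqnarray*}
X(t,x)-X(t,t_n,x) &=& \int_{t_n}^t\int_{\R^3} G(t-s,x-y)(A+B)\bigl(X(s,y)\bigr) M(\mathrm{d}s,\mathrm{d}y)\\
&&{}+\bigl\langle G(t-\cdot,x-\ast)D\bigl(X(\cdot,\ast)\bigr)1_{(t_n,t]}(\cdot),h\bigr\rangle_{\mathcal{H}_t}\\
&&{}+\int_{t_n}^t\bigl[G(t-s,\cdot)\star b\bigl(X(s,\cdot)\bigr)\bigr](x)\,\mathrm{d}s.
\end{eqnarray*}
By the definition \eqref{s3.8.1} of $\underline{t}_n$ and $t_n$ one has $|t-t_n|\le 2\cdot 2^{-n}T$, so the whole point is to show that each of these three pieces, integrated over a window of this length, has $p$-th moment of order $|t-t_n|^{p(\nu+1)/2}$; since $(\nu+1)/2\le 1$ this exponent dominates the contribution of the drift and yields \eqref{s4.4}. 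Throughout I would use the uniform moment bound $\sup_{(t,x)}\|X(t,x)\|_p<\infty$ coming from the existence theory of Theorem~\ref{ts5.1}, together with the linear growth of $A,B,D,b$ guaranteed by Hypothesis~\ref{HypB}.

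For the stochastic term I would follow verbatim the treatment of $R_n^{1,1}$ in \eqref{s3.591}--\eqref{s3.60}: Burkholder's inequality reduces its $p$-th moment to $\E\bigl(|\int_{t_n}^t\|G(t-s,x-\ast)(A+B)(X(s,\ast))\|_{\mathcal{H}}^2\,\mathrm{d}s|^{p/2}\bigr)$; after the change of variable $s\mapsto t-s$ this becomes an integral over $[0,t-t_n]$, and H\"older's inequality in the time variable contributes a factor $|t-t_n|^{p/2-1}$. The remaining integrand is controlled using the identity $\int_{\R^3}\int_{\R^3} G(s,\mathrm{d}y)G(s,\mathrm{d}z)f(y-z)=\int_{|z|\le 2s}\tfrac{f(z)}{|z|}\,\mathrm{d}z$ (Lemma 7.1 of \cite{h-hu-nu}) together with Hypothesis~\ref{HypH2}\,\textbf{(a)}, i.e.\ \eqref{H2-a}, which bounds it by $Cs^{\nu}$, and then the uniform moment bound on $X$. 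Integrating $s^{\nu p/2}$ over $[0,t-t_n]$ produces $|t-t_n|^{\nu p/2+1}$, which combined with the H\"older factor gives the claimed $C|t-t_n|^{p(\nu+1)/2}\le C2^{-np(\nu+1)/2}$.

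The $h$-pathwise term I would handle by Cauchy--Schwarz in $\mathcal{H}_t$: its $p$-th moment is bounded by $\|h\|_{\mathcal{H}_t}^p\,\E\bigl(\|G(t-\cdot,x-\ast)D(X(\cdot,\ast))1_{(t_n,t]}\|_{\mathcal{H}_t}^p\bigr)$, and since this $\mathcal{H}_t$-norm unfolds to exactly the same time-integral of $\mathcal{H}$-norms as above, the identical estimate applies. For the drift term, H\"older's inequality together with $\int_{\R^3}G(t-s,x-\mathrm{d}y)=t-s\le T$ and the moment bound on $X$ gives a bound of order $|t-t_n|^{p}$, which, since $\nu\le 1$, is absorbed into $|t-t_n|^{p(\nu+1)/2}$. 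Collecting the three estimates proves \eqref{s4.4}. Finally \eqref{s4.5} is immediate from the triangle inequality $\|X(t,t_n,x)\|_p\le\|X(t,x)\|_p+\|X(t,x)-X(t,t_n,x)\|_p$, both terms being uniformly bounded in $(n,t,x)$ by the moment bound on $X$ and by \eqref{s4.4}, respectively.

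I expect the main obstacle to be the stochastic term, and specifically pinning down the correct exponent: the gain of $2^{-np(\nu+1)/2}$ results from the delicate interplay between the near-origin integrability $\int_{|z|\le 2s}\tfrac{f(z)}{|z|}\,\mathrm{d}z\le Cs^{\nu}$ supplied by \eqref{H2-a} and the short length $|t-t_n|\le C2^{-n}$ of the time window; care is needed to route the H\"older exponents so that exactly one extra power of the length survives.
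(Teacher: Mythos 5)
Your proposal matches the paper's own proof essentially step for step: the same decomposition of $X(t,x)-X(t,t_n,x)$ into the stochastic, pathwise ($h$) and drift pieces supported on $[t_n,t]$, the same Burkholder/H\"older reduction combined with Lemma 7.1 of \cite{h-hu-nu} and \eqref{H2-a} to produce the factor $|t-t_n|^{p(\nu+1)/2}\le C2^{-np(\nu+1)/2}$, and the same triangle-inequality deduction of \eqref{s4.5} from \eqref{s4.4} and \eqref{s5.21}. The only (immaterial) difference is that the paper bounds the drift term by $C2^{-2np}$ using $\int_{\R^3}G(t-s,x-\mathrm{d}y)=t-s$, whereas you settle for $C|t-t_n|^{p}$; both are absorbed into $C2^{-np(\nu+1)/2}$ since $\nu\le 1$.
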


\begin{proof} Fix $p\in[2,\infty[$. From equations \eqref{s3.7},
\eqref{s3.8.3}, we obtain\vspace*{1pt}
\[
\bigl \|X(t,x)-X(t,t_n,x)\bigr \|_p^p\le C
\bigl(V_1(t,x)+ V_2(t,x)+V_3(t,x) \bigr),
\]
where\vspace*{1pt}
\begin{eqnarray*}
V_1(t,x) &:=& \biggl\llVert \int_{t_n}^t
\int_{\R^3} G(t-s,x-y) (A+B) \bigl(X(s,y)\bigr) M(\mathrm{d}s,
\mathrm{d}y)\biggr\rrVert _p^p,
\\
V_2(t,x) &:=&\bigl\llVert G(t-\cdot,x-\ast)D\bigl(X(\cdot,\ast )
\bigr)1_{[t_n,t]}(\cdot),h\rangle_{\mathcal{H}_t}\bigr\rrVert
_p^p,
\\
V_3(t,x) &:=&\biggl\llVert \int_{t_n}^t
G(t-s,\cdot)\star b\bigl(X(s,\cdot)\bigr) (x) \,\mathrm{d}s\biggr\rrVert
_p^p.
\end{eqnarray*}
Applying first Burholder's and then H\"older's inequalities, we obtain
\begin{eqnarray*}
V_1(t,x) &\le& C \E\bigg(\int_{t_n}^t
\mathrm{d}s  \int_{\R^3}\int_{\R^3}G(t-s,x-dy) G(t-s,x-dz)f(y-z)\\
&&\qquad\times\big(A+B\big)\big(X(t,y)\big) \big(A+B)(X(t,z)\big)  \biggl)^{p/2}\\
&=&C \E\bigg(\int_0^{t-t_n}
\mathrm{d}s  \int_{\R^3}\int_{\R^3}G(s,x-dy) G(s,x-dz)f(y-z)\\
&&\qquad\times\big(A+B\big)\big(X(t-s,y)\big) \big(A+B)(X(t-s,z)\big)  \biggl)^{p/2}\\
&\le& C (t-t_n)^{\tfrac{p}{2}-1} \int_0^{t-t_n}
\mathrm{d}s \bigg(\int_{\R^3}\int_{\R^3}G(s,x-dy) G(s,x-dz)f(y-z)\biggl)^{p/2}\\
&&\times \sup_{(t,x)\in[0,T]\times\R^3}\E
\bigl(\bigl |(A+B) \bigl(X(t,x)\bigr)\bigr  |^{p} \bigr)\\
&\le& C (t-t_n)^{\tfrac{p}{2}-1} \int_0^{t-t_n}
\mathrm{d}s \bigg(\int_{\R^3}\int_{\R^3} G(s,x-dy) G(s,x-dz)f(y-z)\biggl)^{p/2}\\
&&\times \Bigl(1 + \sup_{(t,x)\in[0,T]\times\R^3}
\E \bigl( \bigl|X\bigl((t,x)\bigr|^{p}\bigr) \Bigr)
\end{eqnarray*}

Applying the inequality \eqref{s5.21}, Lemma 7.1 in \cite{h-hu-nu}  and then a) of the  {\bf hypothesis 2}, we get 
\begin{eqnarray*}
V_1(t,x) &\le& C (t-t_n)^{\tfrac{p}{2}-1} \int_0^{t-t_n}
\mathrm{d}s \bigg(\int_{|z|\le 2s}\frac{f(z)}{|z|}\biggl)^{p/2}\\
&\le& C (t-t_n)^{\tfrac{p}{2}-1} \int_0^{t-t_n}
s^{\nu p/2} \mathrm{d}s \le C (t-t_n)^{p\tfrac{\nu+1}{2}}\le C 2^{-np\tfrac{\nu+1}{2}}.
\end{eqnarray*}

For the study of $V_2$, we apply first Cauchy--Schwarz inequality and
then H\"older's inequality. We obtain
\[
V_2(t,x) \le\bigl \|h1_{[t_n,t]}(\cdot)\bigr \|_{\mathcal{H}_t}^{{\trup{p}{2}}}
\E \biggl(\int_0^t \mathrm{d}s \bigl \|G(t-s,x-
\ast)D\bigl(X(s,\ast)\bigr) 1_{[t_n,t]}(s)\bigr \|_{\mathcal{H}}^2
\biggr)^{{\trup{p}{2}}}.
\]
Hence, similarly as for $V_1$ we have
\[
V_2(t,x)\le C  2^{-np\tfrac{\nu+1}{2}}.
\]

By applying H\"older's inequality, we get
\begin{eqnarray*}
V_3(t,x) &\le& \biggl(\int_{t_n}^t
\mathrm{d}s \int_{\R^3} G(t-s,x-\mathrm{d}y)
\biggr)^{p-1} \int_{t_n}^t \mathrm{d}s
\int_{\R^3} G(t-s,x-\mathrm{d}y) \E\bigl(\bigl |b\bigl(X(s,y)
\bigr)\bigr |^p\bigr)
\\
&\le& C \biggl(\int_{t_n}^t \mathrm{d}s \int
_{\R^3} G(t-s,x-\mathrm{d}y) \biggr)^p \Bigl(1+
\sup_{(t,x)\in[0,T]\times\R^3} \E\bigl(\bigl |X(s,y)\bigr |^p\bigr) \Bigr)
\\
&\le& C 2^{-2np}.
\end{eqnarray*}
The condition $\nu\in\,]0,1[$ implies $2^{-2np}< 2^{-np\tfrac{\nu+1}{2}}$. Thus from the estimates on $V_i(t,x)$, $i=1,2,3$
(which hold uniformly on $(t,x)\in[0,T]\times\IR^3$) we obtain
\eqref{s4.4}.

Finally, \eqref{s4.5} is a consequence of the triangular inequality,
\eqref{s4.4} and \eqref{s5.21}.
\end{proof}


The next result states an analogue of Lemma~\ref{ls4.1} for the
stochastic processes $X_n$, $X_n^-$ defined in \eqref{s3.6}, \eqref
{s3.8.2}, respectively, this time including a localization by $L_n$.

\begin{lemma}
\label{ls4.2}
We assume Hypothesis~\textup{\ref{HypB}}.
Then for any $p\in[2,\infty)$ and $t\in[0,T]$,
%
\begin{eqnarray}
\label{s4.11} &&\sup_{(s,y)\in[0,t]\times\R^3}\E \bigl(\bigl |X_n(s,y)-X_n^-(s,y)\bigr |^p1_{L_n(s)}
\bigr)
\nonumber
\\[-8pt]
\\[-8pt]
&&\quad \le Cn^{\trup{3p}{2}} 2^{-np\tfrac{\nu+1}{2}} \Bigl[1+ \sup
_{(s,y)\in[0,t]\times\R^3} \E\bigl(\bigl |X_n(s,y)\bigr |^p1_{L_n(s)}
\bigr) \Bigr].
\nonumber
\end{eqnarray}
\end{lemma}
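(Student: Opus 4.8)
The plan is to imitate the proof of Lemma~\ref{ls4.1}, decomposing $X_n-X_n^-$ according to the four integral terms of \eqref{s3.6} and \eqref{s3.8.2} and using that these two processes differ only through integrals over the short interval $[s_n,s]$, whose length obeys $|s-s_n|\le 2^{1-n}T$. Writing $X_n(s,y)-X_n^-(s,y)=\sum_{i=1}^{4}W_i(s,y)$ with
\[
W_1(s,y)=\int_{s_n}^{s}\int_{\R^3}G(s-r,y-z)A\bigl(X_n(r,z)\bigr)M(\mathrm dr,\mathrm dz),
\]
\[
W_2(s,y)=\bigl\langle G(s-\cdot,y-\ast)B\bigl(X_n(\cdot,\ast)\bigr)1_{[s_n,s]}(\cdot),w^n\bigr\rangle_{\mathcal{H}_s},
\]
$W_3$ the analogous pairing against $h$, and $W_4(s,y)=\int_{s_n}^{s}[G(s-r,\cdot)\star b(X_n(r,\cdot))](y)\,\mathrm dr$, it suffices to bound $\E(|W_i(s,y)|^p 1_{L_n(s)})$ uniformly in $(s,y)$.

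For $W_1$ I would argue as in \eqref{s3.32}: since the sets $L_n(\cdot)$ decrease in time, Burkholder's inequality lets me replace $1_{L_n(s)}$ by $1_{L_n(r)}$ inside the time integral and bound $\E(|W_1|^p1_{L_n(s)})$ by a constant times $\E\bigl(\int_{s_n}^{s}\|G(s-r,y-\ast)A(X_n(r,\ast))\|_{\mathcal{H}}^2 1_{L_n(r)}\,\mathrm dr\bigr)^{p/2}$. Rewriting the $\mathcal{H}$-norm by \eqref{fundamental}, shifting the time variable, applying H\"older's inequality in time (using $|s-s_n|\le C2^{-n}$), Lemma~7.1 of \cite{h-hu-nu}, Hypothesis~\ref{HypH2}(a) (i.e.\ \eqref{H2-a}, giving $\int\!\int G(r,\mathrm du)G(r,\mathrm dv)f(u-v)\le Cr^{\nu}$), and the linear growth of $A$, I expect
\[
\E\bigl(|W_1(s,y)|^p1_{L_n(s)}\bigr)\le C\,|s-s_n|^{p(\nu+1)/2}\Bigl[1+\sup_{(r,z)}\E\bigl(|X_n(r,z)|^p1_{L_n(r)}\bigr)\Bigr]\le C\,2^{-np(\nu+1)/2}\bigl[1+\cdots\bigr].
\]
The term $W_3$ is handled identically once Cauchy--Schwarz in $\mathcal{H}_s$ is used and $\|h1_{[s_n,s]}\|_{\mathcal{H}_s}\le\|h\|_{\mathcal{H}_T}$ is noted, while $W_4$ is estimated by H\"older against $G(s-r,y-\mathrm dz)\,\mathrm dr$, whose total mass over $[s_n,s]$ is $\tfrac12|s-s_n|^2$; this gives the smaller bound $C\,2^{-2np}[1+\cdots]$, dominated by $2^{-np(\nu+1)/2}$ because $\nu\le1$.

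The decisive term is $W_2$, which produces the factor $n^{3p/2}$. Cauchy--Schwarz gives, on $L_n(s)$,
\[
|W_2(s,y)|\,1_{L_n(s)}\le\bigl\|w^n1_{[s_n,s]}1_{L_n(s)}\bigr\|_{\mathcal{H}_s}\,\bigl\|G(s-\cdot,y-\ast)B(X_n(\cdot,\ast))1_{[s_n,s]}\bigr\|_{\mathcal{H}_s}.
\]
Since $[s_n,s]$ meets at most two consecutive dyadic intervals $\Delta_i$, the estimate preceding \eqref{s3.14} yields $\|w^n1_{[s_n,s]}1_{L_n(s)}\|_{\mathcal{H}_s}\le Cn^{3/2}2^{n/2}|s-s_n|^{1/2}\le Cn^{3/2}$, and the $p$-th moment of the second factor is controlled exactly as for $W_1$, by $C\,2^{-np(\nu+1)/2}[1+\sup_{(r,z)}\E(|X_n(r,z)|^p1_{L_n(r)})]$. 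Multiplying the two bounds and collecting the four estimates yields \eqref{s4.11}.

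I expect the main obstacle to be the careful handling of $W_2$: one must insert the localization \emph{before} invoking Cauchy--Schwarz so that the bound \eqref{s3.101}--\eqref{s3.14} on $\|w^n1_{L_n}\|_{\mathcal{H}}$ is available, and it is precisely the smallness $|s-s_n|^{1/2}\le C2^{-n/2}$ that cancels the explosive factor $2^{n/2}$ of the regularized noise and leaves only the polynomial factor $n^{3p/2}$. Once this balance is arranged, the remaining terms are routine adaptations of the computations already carried out in Lemma~\ref{ls4.1}.
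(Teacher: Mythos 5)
Your proposal is correct and follows essentially the same route as the paper: the identical four-term decomposition of $X_n-X_n^-$ over $[s_n,s]$, Burkholder plus Hypothesis~\ref{HypH2}(a) for the stochastic term, Cauchy--Schwarz with the bound $\bigl\|w^n1_{[s_n,s]}1_{L_n(s)}\bigr\|_{\mathcal{H}_s}\le Cn^{3/2}$ for the $w^n$-pairing, and the analogous treatment of the $h$-pairing and drift terms. Your remark that one must use the estimate preceding \eqref{s3.14} (rather than \eqref{s3.14} itself, since $[s_n,s]$ can straddle two dyadic intervals) is in fact slightly more careful than the paper's wording.
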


\begin{proof}
Fix $p\in[2,\infty[$ and consider the decomposition
%
\begin{equation}
\label{s4.12} \E \bigl(\bigl |X_n(t,x) - X_n^-(t,x)\bigr |^p
1_{L_n(t)} \bigr)\le C\sum_{i=1}^4
T_{n,i}^k (t,x),
\end{equation}
where
\begin{eqnarray*}
T_{n,1}(t,x) &=&\E \biggl(\biggl\llvert \int_{t_n}^t
\int_{\R^3} G(t-s,x-y) A\bigl( X_n(s,y)\bigr)M(
\mathrm{d}s,\mathrm{d}y) \biggr\rrvert ^p 1_{L_n(t)} \biggr),
\\
T_{n,2}(t,x) &=&\E \bigl(\bigl\llvert \bigl\langle G(t-\cdot,x-\ast) B
\bigl( X_n(\cdot,\ast)\bigr)1_{[t_n,t]}(\cdot),w^n
\bigr\rangle_{\mathcal{H}_t} \bigr\rrvert ^p 1_{L_n(t)} \bigr),
\\
T_{n,3}(t,x) &=&\E \bigl(\bigl\llvert \bigl\langle G(t-\cdot,x-\ast) D
\bigl( X_n(\cdot,\ast)\bigr)1_{[t_n,t]}(\cdot),h\bigr
\rangle_{\mathcal{H}_t} \bigr\rrvert ^p 1_{L_n(t)} \bigr),
\\
T_{n,4}(t,x) &=&\E \biggl(\biggl\llvert \int_{t_n}^t
G(t-s,\cdot)\star b\bigl(X_n(s,\cdot)\bigr) (x)\, \mathrm{d}s \biggr
\rrvert ^p 1_{L_n(t)} \biggr).
\end{eqnarray*}

By the same arguments used for the analysis of $V_1(t,x)$ in the
preceding lemma, we obtain
%
\begin{equation}
\label{s4.13} T_{n,1}(t,x)\le C 2^{-np\tfrac{\nu+1}{2}}\times \Bigl[1+\sup
_{(s,y)\in[0,t]\times\R^3}\E \bigl(\bigl\llvert X_n(s,y) \bigr\rrvert
^p 1_{L_n(s)} \bigr) \Bigr].
\end{equation}


For $T_{n,2}(t,x)$, we first use Cauchy--Schwarz' inequality to obtain
\begin{eqnarray*}
&&T_{n,2}(t,x)
\\
&&\quad\le\E \bigl(\bigl\llvert \bigl \|w^n
1_{[t_n,t]} 1_{L_n(t)}\bigr \| _{\mathcal{H}_t} \bigl \|G(t-\cdot,x-\ast) B
\bigl( X_n(\cdot,\ast)\bigr)1_{[t_n,t]}(\cdot )1_{L_n(t)}
\bigr \|_{\mathcal{H}_t}\bigr\rrvert ^p \bigr).
\end{eqnarray*}
Appealing to \eqref{s3.14}, this yields
\begin{eqnarray*}
T_{n,2}(t,x)&\le& C n^{\trup{3p}{2}} \E \biggl(\biggl\llvert \int
_{t_n}^t \mathrm{d}s \bigl \|G(t-s,x-\ast) B\bigl(
X_n(s,\ast)\bigr) (s)1_{L_n(s)}\bigr \|_{\mathcal{H}}^2
\biggr\rrvert ^{{\trup{p}{2}}} \biggr)\\
&=& C n^{\trup{3p}{2}} \E\bigg(\int_{t_n}^t
\mathrm{d}s  \int_{\R^3}\int_{\R^3}G(t-s,x-dy) G(t-s,x-dz)f(y-z)\\
&&\qquad\qquad\times B\big(X(t,y)\big) B\big(X(t,z)\big)  \biggl)^{p/2}. 
\end{eqnarray*}

We can now proceed as for the term $V_2((t,x)$ in the proof of Lemma~\ref{ls4.1}. We obtain
%
\begin{equation}
\label{s4.14} T_{n,2}(t,x)\le C n^{\trup{3p}{2}} 2^{-np\tfrac{\nu+1}{2}} 
\Bigl[1+\sup_{(s,y)\in[0,t]\times\R^3}\E \bigl(\bigl\llvert X_n(s,y)
\bigr\rrvert ^p 1_{L_n(s)} \bigr) \Bigr].
\end{equation}

The difference between the terms $T_{n,3}(t,x)$ and $T_{n,2}(t,x)$ is
that $w^n$ in the latter is replaced by $h$ in the former. Hence,
following similar arguments as for the study of $T_{n,2}(t,x)$, and
using that $\Vert h 1_{[t_n,t]} 1_{L_n(t)}\Vert_{\mathcal
{H}_T}<\infty$, we prove
%
\begin{equation}
\label{s4.15} T_{n,3}(t,x)\le C 2^{-np\tfrac{\nu+1}{2}} \times \Bigl[1+\sup
_{(s,y)\in[0,t]\times\R^3}\E \bigl(\bigl\llvert X_n(s,y) \bigr\rrvert
^p 1_{L_n(s)} \bigr) \Bigr].
\end{equation}

Finally, we notice the similitude between $T_{n,4}(t,x)$ and $V_3(t,x)$
in Lemma~\ref{ls4.1}. Proceeding as for the study of this term, we obtain
%
\begin{eqnarray}
\label{s4.16} T_{n,4}(t,x)&\le& C \biggl(\int_{t_n}^t
\mathrm{d}s \int_{\IR^3} G(t-s,x-\mathrm{d}y)
\biggr)^p \Bigl[1+\sup_{(s,y)\in[0,t]\times\R^3} \E \bigl(\bigl\llvert
X_n(s,y)\bigr\rrvert ^p 1_{L_n(s)} \bigr) \Bigr]
\nonumber
\\[-8pt]
\\[-8pt]
&\le& C 2^{-np\tfrac{\nu+1}{2}} \Bigl[1+\sup_{(s,y)\in[0,t]\times
\R^3}\E \bigl(\bigl
\llvert X_n(s,y)\bigr\rrvert ^p 1_{L_n(s)} \bigr)
\Bigr].
\nonumber
\end{eqnarray}
From \eqref{s4.12}--\eqref{s4.16} we obtain \eqref{s4.11}.
\end{proof}

%
\begin{lemma}
\label{ls4.3}
We assume Hypothesis~\textup{\ref{HypB}}. Then, for any $p\in[1,\infty)$, there
exists a finite constant $C$ such that
%
\begin{equation}
\label{s4.18} \sup_{n\ge1}\sup_{(t,x)\in[0,T]\times\R^3}\E
\bigl[\bigl(\bigl |X_n(t,x)\bigr |^p+\bigl |X_n^-(t,x)\bigr |^p
\bigr)1_{L_n(t)} \bigr]\le C.
\end{equation}
Moreover,
%
\begin{equation}
\label{s4.19} \sup_{(t,x)\in[0,T]\times\R^3} \bigl \|\bigl(X_n(t,x)-X_n^-(t,x)
\bigr)1_{L_n(t)}\bigr \| _p \le C n^{\trup{3}{2}} 2^{-n\tfrac{\nu+1}{2}}.
\end{equation}
\end{lemma}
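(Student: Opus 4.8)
The plan is to establish \eqref{s4.18} first and then deduce \eqref{s4.19} as an immediate corollary. For \eqref{s4.18}, set $M_n(t):=\sup_{(s,y)\in[0,t]\times\R^3}\E(|X_n(s,y)|^p1_{L_n(s)})$, which is finite for each fixed $n$ by the a priori moment estimates from the existence theorem (Teorema~\ref{ts5.1}) together with the boundedness of the noise increments on $L_n(t)$. Starting from \eqref{s3.6}, I would split $\E(|X_n(t,x)|^p1_{L_n(t)})$ into the contributions of $X^0$, the stochastic integral in $A$, the two pathwise integrals in $B$ (against $w^n$) and $D$ (against $h$), and the drift in $b$. The term $X^0$ is bounded by a constant thanks to Hypothesis~\ref{HypH1}(a); the stochastic integral is controlled by Burkholder's inequality, the linear growth of $A$, and the finiteness of $\int_0^t\|G(t-s,x-\ast)\|_{\mathcal{H}}^2\,ds$ (Lemma 7.1 of \cite{h-hu-nu} and \eqref{cond-f}), yielding a bound $C+C\int_0^tM_n(s)\,ds$; the $h$-term and the drift are handled analogously, by Cauchy--Schwarz (using $\|h\|_{\mathcal{H}_t}<\infty$) and Hölder respectively, each contributing $C+C\int_0^tM_n(s)\,ds$.

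The delicate term is the pathwise integral $\langle G(t-\cdot,x-\ast)B(X_n(\cdot,\ast)),w^n\rangle_{\mathcal{H}_t}$, where a naive Cauchy--Schwarz bound via \eqref{s3.101} produces the exploding factor $2^{np/2}$. To avoid this I would mimic the splitting $B(X_n)=B(X_n^-)+[B(X_n)-B(X_n^-)]$ already used for $R_n^2$, $R_n^3$ in the proof of Lemma~\ref{lss3.1.2}. For the first piece, since $X_n^-(\cdot,\ast)$ is $\mathcal{F}_{s_n}$-measurable, the definition of $w^n$ lets me rewrite the pathwise integral as a genuine stochastic integral of $(\pi_n\circ\tau_n)(G(t-\cdot,x-\ast)B(X_n^-(\cdot,\ast)))$ against $M$; Burkholder's inequality, the uniform boundedness of $\pi_n\circ\tau_n$ on $\mathcal{H}_T$, and the linear growth of $B$ then give a clean bound $C+C\int_0^t\sup_y\E(|X_n^-(s,y)|^p1_{L_n(s)})\,ds$, which by Lemma~\ref{ls4.2} is in turn $\le C+C\int_0^tM_n(s)\,ds$. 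For the second piece I would apply Cauchy--Schwarz together with \eqref{s3.101}, picking up the factor $Cn^{3p/2}2^{np/2}$, and then use the Lipschitz continuity of $B$ and the smallness estimate \eqref{s4.11} of Lemma~\ref{ls4.2}, namely $\E(|X_n-X_n^-|^p1_{L_n})\le Cn^{3p/2}2^{-np(\nu+1)/2}[1+M_n(t)]$. The two factors combine to $Cn^{3p}2^{-np\nu/2}[1+M_n(t)]$, which tends to $0$ as $n\to\infty$ because $\nu>0$.

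Collecting the pieces gives, for some sequence $\varepsilon_n:=Cn^{3p}2^{-np\nu/2}\to0$,
\[
M_n(t)\le C+C\int_0^tM_n(s)\,ds+\varepsilon_n\bigl(1+M_n(t)\bigr).
\]
For $n$ large enough that $\varepsilon_n\le\tfrac12$ the last term is absorbed into the left-hand side, leaving $M_n(t)\le C+C\int_0^tM_n(s)\,ds$, and Gronwall's lemma yields $M_n(t)\le C$ uniformly in such $n$; the finitely many remaining (small) $n$ are bounded individually since each $M_n(t)<\infty$, so \eqref{s4.18} holds for $X_n$ uniformly in $n$. The bound for $X_n^-$ then follows from $\E(|X_n^-|^p1_{L_n})\le C\E(|X_n|^p1_{L_n})+C\E(|X_n-X_n^-|^p1_{L_n})$ together with \eqref{s4.11}.

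Finally, \eqref{s4.19} is immediate: inserting the uniform bound \eqref{s4.18} into \eqref{s4.11} gives $\E(|X_n-X_n^-|^p1_{L_n})\le Cn^{3p/2}2^{-np(\nu+1)/2}$, and taking $p$-th roots yields the claim. The main obstacle is precisely the control of the $w^n$-integral: the whole argument hinges on the interplay between the projection/shift identity (which removes the exploding factor from the $\mathcal{F}_{s_n}$-measurable part) and the smallness estimate of Lemma~\ref{ls4.2} (which makes the remaining exploding factor harmless); note in particular that no circularity arises, since the proof of \eqref{s4.18} invokes only Lemma~\ref{ls4.2}, while \eqref{s4.19} is deduced from \eqref{s4.18} afterwards.
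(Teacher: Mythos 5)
Your proposal is correct and is essentially the paper's own argument: the same term-by-term decomposition of \eqref{s3.6}, the same splitting $B(X_n)=B(X_n^-)+\bigl[B(X_n)-B(X_n^-)\bigr]$ with the $\pi_n\circ\tau_n$ rewriting of the $\mathcal{F}_{s_n}$-measurable part as a genuine stochastic integral, the same use of \eqref{s4.11} to neutralize the exploding factor $n^{3p/2}2^{np/2}$, and a Gronwall closure, with \eqref{s4.19} then read off from \eqref{s4.11} and \eqref{s4.18}. The only cosmetic difference is bookkeeping: the paper runs Gronwall on the interpolated process $X_n(t,r;x)$ evaluated at $r=t$ and $r=t_n$, so that $X_n$ and $X_n^-$ are bounded simultaneously, whereas you run it on $M_n(t)$ alone, absorb the $\varepsilon_n\bigl(1+M_n(t)\bigr)$ term for large $n$, and recover the $X_n^-$ bound afterwards via the triangle inequality and \eqref{s4.11}.
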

\begin{proof}
For $0\le r\le t$, define
\begin{eqnarray*}
X_n(t,r;x) &=&\frac{d}{dt}\big(G(t)*v_0 \big)(x) + \big(G(t)*\bar v_0 \big)(x)
\\
&&{} \int_0^r
\int_{\R^3} G(t-s,x-y) A\bigl(X_n(s,y)\bigr) M(
\mathrm{d}s,\mathrm{d}y)
\\
&&{} +\bigl\langle G(t-\cdot,x-\ast)B\bigl(X_n(\cdot,\ast)
\bigr)1_{[0,r]}(\cdot ),w^n\bigr\rangle_{\mathcal{H}_t}
\\
&&{} +\bigl\langle G(t-\cdot,x-\ast)D\bigl(X_n(\cdot,\ast)
\bigr)1_{[0,r]}(\cdot ),h\bigr\rangle_{\mathcal{H}_t} +\int
_0^r G(t-s,\cdot)\star b\bigl(X_n(s,
\cdot)\bigr) (x) \,\mathrm{d}s.
\end{eqnarray*}

Fix $p\in[2,\infty[$ and consider the decomposition
\[
\E\bigl(\bigl |X_n(t,r;x)\bigr |^p1_{L_n(t)}\bigr)\le C \sum
_{i=1}^5 T_{n,i}(t,r;x),
\]
where
\begin{eqnarray*}
T_{n,0}(t,r;x) &=& \Big| \frac{d}{dt}\big(G(t)*v_0 \big)(x) + \big(G(t)*\bar v_0 \big)(x)   \Big|^p \\
T_{n,1}(t,r;x) &=& \E \biggl(\biggl\llvert \int_0^r
\int_{\R^3} G(t-s,x-y)A\bigl(X_n(s,y)\bigr) M(
\mathrm{d}s,\mathrm{d}y)\biggr\rrvert ^p 1_{L_n(t)} \biggr),
\\
T_{n,2}(t,r;x) &=& \E \bigl(\bigl\llvert \bigl\langle G(t-\cdot,x-
\ast) B\bigl(X_n^-(\cdot,\ast)\bigr) 1_{[0,r]}(
\cdot),w^n\bigr\rangle_{\mathcal{H}_t} \bigr\rrvert ^p
1_{L_n(t)} \bigr),
\\
T_{n,3}(t,r;x) &=& \E \bigl(\bigl\llvert \bigl\langle G(t-\cdot,x-
\ast) \bigl[B\bigl(X_n(\cdot,\ast)\bigr)-B\bigl(X_n^-(
\cdot,\ast)\bigr)\bigr] 1_{[0,r]}(\cdot ),w^n\bigr
\rangle_{\mathcal{H}_t}\bigr\rrvert ^p1_{L_n(t)} \bigr),
\\
T_{n,4}(t,r;x) &=& \E \bigl(\bigl\llvert \bigl\langle G(t-\cdot,x-
\ast) D\bigl(X_n(\cdot,\ast)\bigr) 1_{[0,r]}(\cdot),h\bigr
\rangle_{\mathcal{H}_t}\bigr\rrvert ^p1_{L_n(t)} \bigr),
\\
T_{n,5}^k (t,r;x) &=& \E \biggl(\biggl\llvert \int
_0^rG(t-s,\cdot)\star b\bigl(X_n(s,
\cdot)\bigr) (x) \,\mathrm{d}s \biggr\rrvert ^p 1_{L_n(t)}
\biggr).
\end{eqnarray*}

By Hypothesis on $v,\bar v$ and for the Theorem 4.6 in \cite{dalang-quer}, we have that 

\begin{eqnarray}
\label{s4.22} T_{n,0}(t,r;x) &\le& C 
\end{eqnarray}


Similarly as for the term $V_1(t,x)$ in Lemma~\ref{ls4.1}, we have
%
\begin{eqnarray}
\label{s4.22.1} T_{n,1}(t,r;x) &\le& C \biggl(\int
_0^r \mathrm{d}s \int_{\IR^3}
\mu(\mathrm{d}\xi)\bigl \vert\mathcal{F} G(t-s) (\xi)\bigr \vert^2
\biggr)^{\trup{p}{2}-1}
\nonumber
\\
&&{} \times\int_0^r \mathrm{d}s \Bigl[1+
\sup_{(\hat{s},y)\in[0,s]\times\R
^3}\E \bigl(\bigl\llvert X_n(\hat{s},y)
\bigr\rrvert ^{p}1_{L_n(\hat
{s})} \bigr) \Bigr]
\nonumber
\\[-8pt]
\\[-8pt]
&&{}\times\biggl(\int
_{\IR^3}\mu(\mathrm{d}\xi)\bigl \vert\mathcal{F} G(t-s) (\xi)\bigr \vert
^2 \biggr)\nonumber
\\
& \le& C \int_0^r \mathrm{d}s \Bigl[1+ \sup
_{(\hat{s},y)\in[0,s]\times\R
^3}\E \bigl(\bigl\llvert X_n(\hat{s},y) \bigr
\rrvert ^{p}1_{L_n(\hat{s})} \bigr) \Bigr].
\nonumber
\end{eqnarray}

Let $\tau_n$ and $\pi_n$ be as in the proof of Lemma~\ref{lss3.1.2}
(see \eqref{s3.34} and the successive lines). Since $X_n^-(s,y)$ is
$\mathcal{F}_{s_n}$-measurable, the definition of $w^n$ implies
\begin{eqnarray*}
&& T_{n,2}(t,r;x)
\\
&&\quad =\E \biggl(\biggl\llvert \int_0^t\int
_{\R^3} (\pi_n \circ\tau_n)
\bigl[G(t-\cdot,x-\ast)B\bigl(X_n^-(\cdot,\ast)\bigr)
\bigr](s,y)1_{L_n(t)} 1_{[0,r]}(\cdot)M(\mathrm{d}s,\mathrm{d}y)
\biggr\rrvert ^p \biggr).
\end{eqnarray*}
Then, applying Burkholder's inequality, using the boundedness of the
operator $\pi_n \circ\tau_n$, and similar arguments as for the term
$T_{n,1}(t,r;x)$ we obtain
%
\begin{equation}
\label{s4.24} T_{n,2}(t,r;x) \le C \int_0^r
\mathrm{d}s \Bigl[1+ \sup_{(\hat{s},y)\in[0,s]\times\R
^3}\E \bigl(\bigl\llvert
X_n^-(\hat{s},y) \bigr\rrvert ^{p}1_{L_n(\hat{s})}
\bigr) \Bigr].
\end{equation}

To study $T_{n,3}(t,r;x)$, we apply Cauchy--Schwarz and then H\"older's
inequality. This yields
\begin{eqnarray*}
&& T_{n,3}(t,r;x)
\\
&&\quad \le\E \bigl(\bigl\llvert \bigl\llVert w^n 1_{[0,r]}
1_{L_n(t)}\bigr\rrVert _{\mathcal{H}_t}^2\bigl\llVert G(t-
\cdot,x-\ast) \bigl[B( X_n)-B\bigl( X_n^-\bigr) \bigr](
\cdot,\ast) 1_{[0,r]}(\cdot) 1_{L_n(t)}\bigr\rrVert
_{\mathcal{H}_t}^2 \bigr\rrvert ^{{\trup{p}{2}}} \bigr)
\\
&&\quad \le C n^{\trup{3p}{2}}2^{n{\trup{p}{2}}} \E \biggl(\int
_0^t \mathrm{d}s \bigl\llVert G(t-s,x-\ast)
\bigl[B( X_n)-B\bigl(X_n^-\bigr) \bigr](s,\ast)
1_{[0,r]}(s)1_{L_n(s)} \bigr\rrVert _{\mathcal{H}}^2
\biggr)^{{\trup{p}{2}}}
\\
&&\quad \le C n^{\trup{3p}{2}} 2^{n{\trup{p}{2}}} \biggl(\int_0^r
\mathrm{d}s\int_{\R^3} \mu(\mathrm{d}\xi)\bigl \vert\mathcal
{F}G(t-s)\bigr \vert^2(\xi) \biggr)^{\trup{p}{2}-1}
\\
&&\qquad{} \times\int_0^r \mathrm{d}s \sup
_{(\hat{s},y)\in[0,s]\times\R^3}\E \bigl(\bigl\llvert X_n(
\hat{s},y)-X_n^-(\hat{s},y) \bigr\rrvert ^p
1_{L_n(\hat{s})} \bigr)
\\
&&\qquad{}\times \biggl(\int_{\R^3} \mu(\mathrm{d}\xi)
\bigl \vert\mathcal{F}G(t-s)\bigr \vert^2(\xi ) \biggr),
\end{eqnarray*}
where we have used \eqref{s3.101} and the Lipschitz continuity of the
function $B$. By applying \eqref{s4.11}, we obtain
\begin{eqnarray*}
T_{n,3}(t,r;x) &\le& C n^{3p}  2^{-np\tfrac{\nu+1}{2}-np/2} \int
_0^r \mathrm{d}s \Bigl[ 1 + \sup
_{(\hat{s},y)\in[0,s]\times\R
^3}\E \bigl(\bigl\llvert X_n(\hat{s},y)\bigr
\rrvert ^p 1_{L_n(\hat{s})} \bigr) \Bigr]
\\
&\le& C n^{3p}  2^{-np\nu/2} \int_0^r \mathrm{d}s \Bigl[ 1 + \sup
_{(\hat{s},y)\in[0,s]\times\R^3}\E \bigl(\bigl\llvert X_n(\hat{s},y)\bigr
\rrvert ^p 1_{L_n(\hat{s})} \bigr) \Bigr],
\end{eqnarray*}
where in the last inequality we have used that
$\sup_n  \{n^{3p} 2^{-np\nu/2}\}<\infty$.

We now consider $T_{n,4}(t,r;x)$. With similar arguments as those used
in the analysis of $T_{n,3}(t,x)$ in Lemma~\ref{ls4.2}, we prove
%
\begin{equation}
\label{s4.26} T_{n,4}(t,r;x)\le C \int_0^r
\mathrm{d}s \Bigl[1+ \sup_{(\hat{s},y)\in
[0,s]\times\R^3}\E \bigl(\bigl\llvert
X_n(\hat{s},y) \bigr\rrvert ^{p}1_{L_n(\hat{s})} \bigr)
\Bigr].
\end{equation}
Finally, we notice that $T_{n,5}(t,r;x)$ is very similar to
$T_{n,4}(t,x)$ in Lemma~\ref{ls4.2}. With similar arguments as those
used in the analysis of this term, we have
%
\begin{equation}
\label{s4.27} T_{n,5}(t,r;x) \le C \int_0^r
\mathrm{d}s \Bigl[ 1 + \sup_{(\hat{s},y)\in
[0,s]\times\R^3} \E \bigl(\bigl\llvert
X_n(\hat{s},y) \bigr\rrvert ^p 1_{L_n(\hat{s})} \bigr)
\Bigr].
\end{equation}

Bringing together
\eqref{s4.22}, \eqref{s4.24}--\eqref{s4.27} yields
%
\begin{eqnarray}
\label{s4.28}
&&\E\bigl(\bigl |X_n(t,r;x)\bigr |^p1_{L_n(t)}
\bigr)
\nonumber
\\[-8pt]
\\[-8pt]
&&\quad\le C \biggl\{1+ \int_0^r \sup
_{(\hat
{s},y)\in[0,s]\times\R^3} \E \bigl( \bigl\{\bigl\llvert X_n(\hat{s},y)
\bigr\rrvert ^p+ \bigl\llvert X_n^-(\hat {s},y) \bigr
\rrvert ^p \bigr\}1_{L_n(\hat{s})} \bigr)\,\mathrm{d}s \biggr\}.
\nonumber
\end{eqnarray}
Notice that $X_n(t,t;x)=X_n(t,x)$. Hence, for $r:=t$, \eqref{s4.28}
tells us
%
\begin{eqnarray}
\label{s4.29}
&& \E\bigl(\bigl |X_n(t,x)\bigr |^p1_{L_n(t)}
\bigr)
\nonumber
\\[-8pt]
\\[-8pt]
&&\quad\le C \biggl\{1+ \int_0^t \sup
_{(\hat
{s},y)\in[0,s]\times\R^3} \E \bigl( \bigl\{\bigl\llvert X_n(\hat{s},y)
\bigr\rrvert ^p +\bigl\llvert X_n^-(\hat{s},y) \bigr
\rrvert ^p \bigr\}1_{L_n(\hat{s})} \bigr) \,\mathrm{d}s \biggr\}.\nonumber
\end{eqnarray}
Next, take $r:=t_n$ and remember that $X_n(t,t_n;x)=X_n^-(t,x)$. From
\eqref{s4.28}, and since $t_n\le t$, we obtain
%
\begin{eqnarray}
\label{s4.30}
&&\E\bigl(\bigl |X_n^-(t,x)\bigr |^p1_{L_n(t)}
\bigr)
\nonumber
\\[-8pt]
\\[-8pt]
&&\quad\le C \biggl\{1+ \int_0^t \sup
_{(\hat{s},y)\in[0,s]\times\R^3} \E \bigl( \bigl\{\bigl\llvert X_n(\hat{s},y)
\bigr\rrvert ^p+\bigl\llvert X_n^-(\hat {s},y) \bigr
\rrvert ^p \bigr\}1_{L_n(\hat{s})} \bigr) \,\mathrm{d}s \biggr\}.\nonumber
\end{eqnarray}
For $t\in[0,T]$, set
\[
\varphi_n(t)= \sup_{(s,y)\in[0,t]\times\R^3} \E \bigl[
\bigl(\bigl |X_n(s,y)\bigr |^p+\bigl |X_n^-(s,y)\bigr |^p
\bigr)1_{L_n(s)} \bigr].
\]
The inequalities \eqref{s4.29}, \eqref{s4.30} imply
$\varphi_n(t)\le C \{1+ \int_0^t \varphi_n(s)\,\mathrm{d}s \}$.
By Gronwall's lemma, this implies \eqref{s4.18}. Finally, the
inequality \eqref{s4.19} is a consequence of \eqref{s4.11} and \eqref{s4.18}
\end{proof}


\section{Appendix}
\label{s5}

We start this section with a theorem on existence and uniqueness of
solution to a class of equations which in particular applies to \eqref
{s3.7}, and therefore also to \eqref{s1.8}, and to
\eqref{s3.6}. For related results, we refer the reader to \cite
{dalang}, Theorem~13, \cite{dalang-quer}, Theorem~4.3.
In comparison with these references, here we state the theorem in
spatial dimension $d=3$, and we assume that $G$ is the fundamental
solution of the wave equation in dimension three.
\begin{theorem}
\label{ts5.1}
Let $G$ denote the fundamental solution to the wave equation in
dimension three and $M$ a Gaussian process as given in the
\hyperref[s1]{Introduction}. Consider the stochastic evolution equation defined by
%
\setcounter{equation}{0}
\begin{eqnarray}
\label{s5.20} Z(t,x)&=&\frac{d}{dt}\big(G(t)*v_0 \big)(x) + \big(G(t)*\bar v_0 \big)(x)
\\
&&{}+\int_0^t\int
_{\IR^3}G(t-s,x-y) \varsigma \bigl(Z(s,y)\bigr)M(\mathrm{d}s,
\mathrm{d}y)
\nonumber
\\
&&{}+\bigl\langle G(t-\cdot,x-\ast)g\bigl(Z(\cdot,\ast)\bigr),H\bigr
\rangle_{\mathcal
{H}_T}
\\
&&{}+\int_0^t \bigl[G(t-s,
\cdot)\star b\bigl(Z(s,\cdot)\bigr)\bigr](x),
\nonumber
\end{eqnarray}
where the functions $\varsigma, g, b: \IR\rightarrow\IR$ are Lipschitz
continuous.
\begin{enumerate}
\item[(i)] Assume that $H=\{H_t, t\in[0,T]\}$ is an $\hac$-valued
predictable stochastic process such that
$C_0:=\sup_\omega\Vert H(\omega)\Vert_{\mathcal{H}_T} <\infty$.

Then, there exists a unique real-valued adapted stochastic process $Z=\{
Z(t,x),\allowbreak   (t,x)\in[0,T]\times\IR^3\}$ satisfying \textup{\eqref{s5.20}}, a.s.,
for all $(t,x)\in[0,T]\times\R^3$. Moreover, the process $Z$ is
continuous in $L^2$ and satisfies
\[
\sup_{(t,x)\in[0,T]\times\IR^3}\E \bigl(\bigl \vert Z(t,x)\bigr \vert^p \bigr) \le
C<\infty,
\]
for any $p\in[1,\infty[$, where the constant $C$ depends among others
on $C_0$.
\item[(ii)] Assume that there exist an increasing sequence of events
$\{\Omega_n, n\ge1\}$ such that $\lim_{n\to\infty}\mathbb
{P}(\Omega_n)=1$, and that $H_n=\{H_n(t), t\in[0,T]\}$ is a sequence
of $\hac$-valued predictable stochastic processes such that
$C_n:=\sup_\omega\Vert H(\omega)1_{\Omega_n}(\omega)\Vert
_{\mathcal{H}_T} <\infty$. Then, the conclusion on existence and
uniqueness of solution to \textup{\eqref{s5.20}} stated in part (\textup{i}) also holds.
\end{enumerate}
The process $Z$ is termed a \textit{random field solution} to \textup{\eqref{s5.20}}.
\end{theorem}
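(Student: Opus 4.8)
The plan is to construct the solution by a standard Picard iteration scheme and to control each of the four terms on the right-hand side of \eqref{s5.20} separately in $L^p(\Omega)$. First I would set $Z^0(t,x)=\frac{d}{dt}(G(t)*v_0)(x)+(G(t)*\bar v_0)(x)$ and define recursively $Z^{m+1}(t,x)$ by inserting $Z^m$ into the stochastic, pathwise and drift terms of \eqref{s5.20}. The goal is to show that $\sup_m \sup_{(t,x)}\E(|Z^m(t,x)|^p)<\infty$ and that $(Z^m)_m$ is Cauchy in the norm $\sup_{(t,x)}\|\cdot\|_p$, yielding a limit $Z$ that solves the equation; uniqueness then follows by the same estimates applied to the difference of two solutions.

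The heart of the argument consists of four moment estimates. For the stochastic convolution I would apply Burkholder's inequality followed by the isometry \eqref{fundamental}, which reduces the problem to controlling $\int_0^t ds \int_{\R^3}\mu(d\xi)|\tf G(t-s)(\xi)|^2$; this integral is finite precisely because of condition \eqref{cond-f} (equivalently Dalang's condition), and together with the linear growth of $\varsigma$ one obtains a bound by $C\int_0^t ds\,[1+\sup_y\E(|Z^m(s,y)|^p)]$. For the pathwise term I would use the Cauchy--Schwarz inequality in $\hact$, the uniform bound $\|H\|_{\hact}\le C_0$, and again the linear growth of $g$, which yields a bound of the same form with the constant depending on $C_0$. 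The drift term is handled by H\"older's inequality with respect to the finite measure $G(t-s,x-dy)\,ds$, and the initial-data term is bounded using the hypotheses on $v_0,\bar v_0$ (Theorem 4.6 in \cite{dalang-quer}).

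Combining these four bounds gives, for the Picard sequence, an inequality of the form $\varphi_{m+1}(t)\le C[1+\int_0^t\varphi_m(s)\,ds]$ with $\varphi_m(t):=\sup_{(s,y)\in[0,t]\times\R^3}\E(|Z^m(s,y)|^p)$, and a standard iteration argument produces the uniform moment bound. Running exactly the same estimates on the increments $Z^{m+1}-Z^m$, now using the Lipschitz (rather than linear-growth) property of $\varsigma,g,b$, gives $\psi_{m+1}(t)\le C\int_0^t\psi_m(s)\,ds$ for $\psi_m(t):=\sup_{(s,y)}\E(|Z^{m+1}(s,y)-Z^m(s,y)|^p)$, whence $\sum_m \psi_m(T)^{1/p}<\infty$ and the sequence converges; the $L^2$-continuity of $Z$ is then obtained by showing that each term on the right-hand side of \eqref{s5.20} is continuous in $(t,x)$, using the established moment bounds and the continuity of the wave kernel.

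The main obstacle I anticipate is the treatment of the pathwise integral against $H$ in part (ii), where $\sup_\omega\|H\|_{\hact}$ may be infinite and only $C_n:=\sup_\omega\|H(\omega)1_{\Omega_n}(\omega)\|_{\hact}<\infty$. The plan here is to run the above construction on each event $\Omega_n$, producing a solution $Z^{(n)}$ valid on $\Omega_n$ with the constant $C_0$ replaced by $C_n$, and then to check that these are consistent on $\Omega_n\subset\Omega_{n+1}$; since $\Pb(\Omega_n)\to 1$, this patches together into a process $Z$ defined a.s.\ and satisfying \eqref{s5.20}. Verifying that the stochastic integral is unaffected by the localization---i.e.\ that restriction to $\Omega_n$ commutes appropriately with the Walsh integral---is the delicate point, but it follows from the predictability of $H_n$ and the fact that the events $\Omega_n$ are fixed rather than time-dependent.
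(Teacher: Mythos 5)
Your proposal is correct and follows essentially the same route as the paper's own (sketched) proof: a Picard iteration started from the initial-data term, uniform $L^p$ moment bounds proved term by term (Burkholder plus the isometry \eqref{fundamental} for the stochastic convolution, Cauchy--Schwarz with $C_0$ for the pathwise term, H\"older for the drift), a Gronwall-type contraction estimate on increments to get convergence and uniqueness, and a localization of the same scheme by the events $\Omega_n$ for part (ii). The paper itself defers the remaining details to \cite{dalang} and \cite{dalang-quer}, exactly the details you spell out.
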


\begin{proof}(Sketch of the proof)
We start with part (i). Consider the
Picard iteration scheme
\[
Z^0(t,x) =\frac{d}{dt}\big(G(t)*v_0 \big)(x) + \big(G(t)*\bar v_0 \big)(x),\vadjust{\goodbreak}
\]
\begin{eqnarray*}
Z^{(k+1)}(t,x) &=&\frac{d}{dt}\big(G(t)*v_0 \big)(x) + \big(G(t)*\bar v_0 \big)(x)
\\
&&{}+\int_0^t\int
_{\IR^3}G(t-s,x-y) \varsigma \bigl(Z^{(k)}(s,y)\bigr)M(
\mathrm{d}s,\mathrm{d}y)
\\
&&{}+\bigl\langle G(t-\cdot,x-\ast)g\bigl(Z^{(k)} (\cdot,\ast)\bigr),H
\bigr\rangle _{\mathcal{H}_T}
 +\int_0^t
\bigl[G(t-s,\cdot)\star b\bigl(Z^{(k)} (s,\cdot)\bigr)\bigr](x),
\end{eqnarray*}
$k\ge0$.

Fix $p\in[2,\infty[$. First, we prove by induction on $k\ge0$ that
%
\begin{equation}
\sup_{(t,x)\in[0,T]\times\IR^3}\E \bigl(\bigl \vert Z^{k}(t,x)\bigr \vert
^p \bigr)\le C<\infty,
\end{equation}
with a constant $C$ independent of $k$.
Second, we prove that
\begin{eqnarray*}
&&\sup_{x\in\IR^3}\E \bigl(\bigl \vert Z^{(k+1)}(t,x)-Z^{(k)}(t,x)
\bigr \vert ^p \bigr)
\\
&&\quad \le C(1+C_0) \biggl[\int_0^t
\mathrm{d}s \sup_{y\in\IR^3}\E \bigl(\bigl \vert Z^{(k)}(s,y)-Z^{(k-1)}(s,y)
\bigr \vert^p \bigr) \biggr].
\end{eqnarray*}
With this, we conclude that the sequence of processes $\{Z^{(k)}(t,x),
(t,x)\in[0,T]\times\IR^3\}$, $k\ge0$ converges in $L^p(\Omega)$ as
$k\to\infty$, uniformly
in $(t,x)\in[0,T]\times\IR^3$. The limit is a random field that
satisfies the properties of the statement. We refer the reader to
\cite{dalang,dalang-quer}, for more
details on the proof.

The proof of part (ii) is done by localizing the preceding Picard
scheme using the sequence $\{\Omega_n, n\ge1\}$.
\end{proof}

In comparison with the equation considered in \cite{dalang-quer}, Theorem~4.3, \eqref{s5.20} has the
extra term
$\langle G(t-\cdot,x-\ast)g(Z(\cdot,\ast)),H\rangle_{\mathcal{H}_T}$.

Part (i) of Theorem~\ref{ts5.1} can be applied to \eqref{s1.8},
\eqref{s3.7}. Therefore, we have
%
\begin{equation}
\label{s5.21} \sup_{(t,x)\in[0,T]\times\IR^3}\E \bigl(\bigl \vert X(t,x)
\bigr \vert^p \bigr) <\infty.
\end{equation}
Let $\Omega_n=L_n(t)$ as given in \eqref{localization}. The sequence
$H_n:=w^n$ defined in \eqref{s3.3} satisfies the assumptions of part
(ii) of Theorem~\ref{ts5.1} (see \eqref{s3.101}). Therefore the
conclusion applies to the stochastic process solution of \eqref{s3.6}.

%
\begin{remark}
\label{r5.1}
Set $Z^{(z)}(s,x)=Z(s,x+z)$, $z\in\R^3$. In opposition to \cite
{dalang}, we cannot argue that the finite dimensional distributions of the
process $\{Z^{(z)}(s,x), (s,x)\in[0,T]\times\R^3\}$ do not depend on
$z$. This is a consequence from the fact that the initial condition
of the SPDE is not zero.
\end{remark}

In the proof of Theorem~\ref{ts3.1}, we have used the lemma below. For
its proof, we refer the reader to Lemma A.2 in \cite{milletss2},
with a trivial change on the spacial dimension
($d=3$ in \cite{milletss2}, while $d=4$ in Lemma~\ref{ls5.2}).

\begin{lemma}
\label{ls5.2}
Fix $[t_0,T]$ with $t_0\ge0$ and a compact set $K\subset\IR^3$.
Let $\{Y_n(t,x),(t,x)\in[t_0,T]\times K, n\ge1\}$ be a sequence of
processes and
$\{B_n(t), t\in[t_0,T]\}\subset\mathcal{F}$ be a sequence of adapted
events which, for every n, decreases in $t$. Assume that for every
$p\in\,]1,\infty[$ the following conditions hold:
\begin{enumerate}
\item[(P1)] There exists $\delta>0$ and $C>0$ such that, for any
$t_0\le t\le\bar{t}\le T$, $x,\bar{x}\in K$,
\[
\sup_n \E \bigl(\bigl |Y_n(t,x)-Y_n(
\bar{t},\bar{x})\bigr |^p 1_{B_n(\bar{t})} \bigr)\le C \bigl(|t-\bar{t}|+|x-
\bar{x}| \bigr)^{4+\delta}.
\]
\item[(P2)] For every $(t,x)\in[t_0,T]\times K$,
\[
\lim_{n\to\infty} \E \bigl(\bigl |Y_n(t,x)\bigr |^p
1_{B_n(t)} \bigr)=0.
\]
\end{enumerate}

Then, for any $\eta\in\,]0,\delta/p[$ and any $r\in[1,p[$,
\[
\lim_{n\to\infty} \E\bigl(\|Y_n\|_{\eta,t_0,K}^r
1_{B_n(T)} \bigr) =0.
\]
\end{lemma}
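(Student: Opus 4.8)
The plan is to derive a uniform (in $n$) bound on a H\"older norm of exponent $\eta'$ slightly larger than $\eta$ by a Kolmogorov-type dyadic chaining argument, and then to combine this bound with the pointwise decay (P2) through an interpolation between H\"older norms. Since $[t_0,T]\times K$ is a compact subset of $\R^4$, the exponent $4+\delta$ in (P1) is the critical value $d+\delta$ with $d=4$, which is precisely what permits extracting H\"older regularity of any order below $\delta/p$. The structural fact that drives everything is that $B_n(t)$ decreases in $t$: for two space-time points with times $t\le\bar t\le T$ one has $1_{B_n(T)}\le 1_{B_n(\bar t)}$, so the increment bound (P1), whose localizing event is attached to the larger time $\bar t$, can always be invoked after bounding $1_{B_n(T)}$ from above. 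Keeping this localization consistent through the chaining is the delicate part of the argument, and I expect it to be the main obstacle; the monotonicity of $B_n$ is exactly what resolves it.

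First I would fix $\eta'$ with $\eta<\eta'<\delta/p$ and, for each scale $m\ge0$, let $D_m$ be the dyadic grid of mesh $2^{-m}$ in $[t_0,T]\times K$, of cardinality $O(2^{4m})$, and set $M_m=\max\bigl|Y_n(z)-Y_n(w)\bigr|$ over adjacent pairs $z,w\in D_m$. Ordering each such pair by time and using $1_{B_n(T)}\le 1_{B_n(\bar t)}$, (P1) gives $\E\bigl(|Y_n(z)-Y_n(w)|^p 1_{B_n(T)}\bigr)\le C2^{-m(4+\delta)}$, so that $\E\bigl(M_m^p1_{B_n(T)}\bigr)\le C2^{4m}2^{-m(4+\delta)}=C2^{-m\delta}$ uniformly in $n$. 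A telescoping chain through the dyadic points (with the supremum over all points recovered from the dyadic one by continuity of the sample paths), together with Minkowski's inequality for the sub-probability measure $1_{B_n(T)}\,d\Pb$, then bounds the H\"older seminorm of exponent $\eta'$ by
\[
\Bigl\Vert\, \textstyle\sum_{m\ge0}2^{m\eta'}M_m\,\Bigr\Vert_{L^p(1_{B_n(T)}d\Pb)}\le C\sum_{m\ge0}2^{m\eta'}\bigl(2^{-m\delta}\bigr)^{1/p}=C\sum_{m\ge0}2^{m(\eta'-\delta/p)},
\]
the series converging because $\eta'<\delta/p$. Adding the value at a fixed base point $z_0=(t_0,x_0)$, which is bounded in $L^p(1_{B_n(T)}d\Pb)$ by (P2), yields $\sup_n\E\bigl(\Vert Y_n\Vert_{\eta',t_0,K}^p1_{B_n(T)}\bigr)\le C<\infty$.

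Finally I would upgrade (P2) to convergence in supremum norm and interpolate. Fix $\varepsilon>0$ and an $\varepsilon$-net $\{z_1,\dots,z_N\}$ of $[t_0,T]\times K$; for any $z$ there is $z_i$ with $|z-z_i|<\varepsilon$, whence $\Vert Y_n\Vert_\infty\le\max_i|Y_n(z_i)|+\varepsilon^{\eta'}[Y_n]_{\eta'}$, where $[\,\cdot\,]_{\eta'}$ denotes the seminorm controlled above. The contribution of the second term to the $L^{r}(1_{B_n(T)}d\Pb)$-norm is at most $\varepsilon^{\eta'}C^{1/p}$ by the uniform bound and $r<p$, while the first term tends to $0$ as $n\to\infty$ because the net is finite and, for each fixed $z_i=(t_i,x_i)$, the bound $1_{B_n(T)}\le 1_{B_n(t_i)}$ allows (P2) to apply. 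Letting $n\to\infty$ and then $\varepsilon\to0$ gives $\E\bigl(\Vert Y_n\Vert_\infty^{r'}1_{B_n(T)}\bigr)\to0$ for every $r'<p$. I would then use the interpolation inequality $[Y_n]_\eta\le\bigl(2\Vert Y_n\Vert_\infty\bigr)^{1-\eta/\eta'}[Y_n]_{\eta'}^{\eta/\eta'}$ and H\"older's inequality, pairing the uniformly bounded factor $[Y_n]_{\eta'}$ at exponent $p$ with the vanishing supremum-norm factor (whose resulting exponent stays below $p$ for $r<p$), to conclude $\E\bigl([Y_n]_\eta^r1_{B_n(T)}\bigr)\to0$. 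Together with the supremum-norm convergence this gives $\E\bigl(\Vert Y_n\Vert_{\eta,t_0,K}^r1_{B_n(T)}\bigr)\to0$, as desired; beyond (P1)--(P2) no additional probabilistic input is required.
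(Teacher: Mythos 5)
Your proposal cannot be matched against an argument inside the paper itself, because the paper does not prove Lemma \ref{ls5.2}: the lemma is imported from the literature, with its proof deferred to Lemma A.2 in \cite{milletss2} (modulo the change of spatial dimension). Read against that background, your proof is correct and amounts to a self-contained reconstruction of the standard argument behind such statements. The decisive point --- that the monotonicity of $t\mapsto B_n(t)$ gives $1_{B_n(T)}\le 1_{B_n(\bar t)}$, so every increment moment in (P1) can be localized by the single event $B_n(T)$; equivalently, the process $Y_n 1_{B_n(T)}$ satisfies an honest, unlocalized Kolmogorov condition with exponent $4+\delta=\dim\bigl([t_0,T]\times K\bigr)+\delta$ --- is exactly what makes the lemma work, and you isolate and use it correctly. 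Your chaining yields $\sup_n\E\bigl(\Vert Y_n\Vert_{\eta',t_0,K}^p 1_{B_n(T)}\bigr)<\infty$ for $\eta<\eta'<\delta/p$; the $\varepsilon$-net step turns the pointwise condition (P2) into convergence of the supremum norm in $L^{r'}(1_{B_n(T)}\,d\Pb)$ for every $r'<p$; and the interpolation $[Y_n]_\eta\le\bigl(2\Vert Y_n\Vert_\infty\bigr)^{1-\eta/\eta'}[Y_n]_{\eta'}^{\eta/\eta'}$ combined with H\"older's inequality closes the argument, the exponent bookkeeping being consistent precisely because $r<p$ (the supremum-norm factor ends up with exponent $r(1-\eta/\eta')/(1-r\eta/(p\eta'))$, which is smaller than $p$ if and only if $r<p$). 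This is the same family of argument as the cited proof (a Garsia--Rodemich--Rumsey/Kolmogorov-type uniform bound in a stronger H\"older norm, then pointwise convergence upgraded by interpolation), so what your version buys is self-containedness and a transparent display of where each hypothesis ($B_n$ decreasing in $t$, $\eta<\delta/p$, $r<p$) enters, at the cost of the length that the citation avoids.

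Two small points should be made explicit if this were written out in full. First, passing from the dyadic points to all of $[t_0,T]\times K$, and indeed the measurability of $\Vert Y_n\Vert_{\eta,t_0,K}$, requires continuity of the sample paths; this is implicit in the lemma (in the paper's application $Y_n=X_n-X$ has H\"older-continuous paths), but your chaining does use it. Second, since $K$ is an arbitrary compact set, ``the dyadic grid of $[t_0,T]\times K$'' should be read as a nested family of $2^{-m}$-nets of $[t_0,T]\times K$ (these have the same cardinality $O(2^{4m})$ and support the same telescoping); with that reading, neither point is a genuine gap.
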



\end{document}